  \documentclass[10pt, a4paper]{article}
  \pdfoutput=1


  \usepackage{amsmath,amsthm,amssymb}
  \usepackage{graphicx,color}
  \usepackage{dsfont}
  \usepackage{float}
  \usepackage{caption}
  \usepackage{subcaption}
  \usepackage[]{algorithm2e}

  \usepackage{makecell}
  \usepackage{gnuplottex}
  \usepackage{pgfplots}

  \usepackage{accents}

   \usepackage{gnuplottex} 

  \setlength{\textwidth} {14.5cm}



\newtheorem{thm}{Theorem}[section]
\theoremstyle{plain}

\newtheorem{lemma}[thm]{{\textbf Lemma}}
\newtheorem{theorem}[thm]{{\textbf Theorem}}

\newtheorem{corollary}[thm]{{\textbf Corollary}}
\newtheorem{assumption}[thm]{{\textbf Assumption}}

\newtheorem{definition}[thm]{{\textbf Definition}}
\newtheorem{remark}[thm]{{\textbf Remark}}

\newtheorem{example}[thm]{{\textbf Example}}

\newcommand{\overbar}[1]{\mkern 1.5mu\overline{\mkern-1.5mu#1\mkern-1.5mu}\mkern 1.5mu}

\makeatletter
  \renewcommand*\env@matrix[1][*\c@MaxMatrixCols c]{%
    \hskip -\arraycolsep
    \let\@ifnextchar\new@ifnextchar
  \array{#1}}
\makeatother

\DeclareMathOperator{\divv}{div}
\DeclareMathOperator{\dt}{\frac{d}{dt} }

\providecommand{\keywords}[1]{\textbf{\textit{Keywords: }} #1}
\providecommand{\AMS}[1]{\textbf{\textit{AMS classification: }} #1}

\newcommand{\ac}{\accentset{\circ}}

\newcommand{\Neh}{N^h_\eps}

\newcommand{\R}{\mathbf R}
\newcommand{\N}{\mathbf N}
\newcommand{\Z}{\mathbf Z}

\newcommand{\supp}{\text{supp}}
\newcommand{\eps}{\epsilon}

\newcommand{\Sb}{\textbf{S}}
\newcommand{\Tb}{\textbf{T}}
\newcommand{\gb}{\textbf{g}}

\newcommand{\beqn}{\begin{eqnarray*}}
\newcommand{\eeqn}{\end{eqnarray*}}

\newcommand{\ben}{\begin{equation}}
\newcommand{\een}{\end{equation}}

\newcommand{\beq}{\begin{eqnarray}}
\newcommand{\eeq}{\end{eqnarray}}

\newcommand{\benn}{\begin{equation*}}
\newcommand{\eenn}{\end{equation*}}

    \frenchspacing
    
    \textwidth=155mm
    \textheight=23cm
    \parindent=16pt
    \oddsidemargin=-0.5cm
    \evensidemargin=-0.5cm
    \topmargin=-0.5cm


\title{Shape optimisation with nonsmooth cost functions: from theory 
	to numerics}

\author{Kevin Sturm\thanks{Universit\"at Duisburg-Essen, Fakult\"at f\"ur Mathematik, Thea-Leymann-Str. 9, D-45127 Essen, Germany (kevin.sturm@uni-due.de)} }
 \date{}
\begin{document}
 
 \maketitle
 

\begin{abstract}
This paper is concerned with the study of a class of nonsmooth cost functions 
subject to a quasi-linear PDE in Lipschitz domains in dimension two. We derive the 
Eulerian semi-derivative of the cost function by employing the averaged 
adjoint approach and maximal elliptic regularity. 
Furthermore we characterise 
stationary points and show how to compute steepest descent directions 
theoretically and practically. 
Finally, we present some numerical results for a 
simple toy problem and compare them with the smooth case.  We also 
compare the convergence rates and obtain higher rates in the nonsmooth case.
\end{abstract}

\keywords{
    shape optimization, nonsmooth cost functions, PDE constraints
}

\AMS{
    49Q10, 49Q12, 35J62, 49K20, 49K40, 49J52
}

\section*{Introduction}
The main object of shape optimisation is the minimisation of a cost or shape function
with respect to a design variable. In applications the design variable may be 
the bodywork of a car or aircraft, but also the shape of antennas or 
inductor coils are possible design variables. The shape function may be the compliance, drag, friction or 
any other physically 
relevant quantities. 
Mathematically speaking the \emph{design variable} 
is a subset of the Euclidean space admitting a certain regularity reflecting
the smoothness of the design and a \emph{shape/cost function} is a 
real-valued mapping on the design variables. 
\\[0.3cm]
While there exists a huge body of research on the topic of smooth shape optimisation 
problems, see \cite{MR2731611,SokZol89,HenPie05,MR1969772} and references therein, 
the work on nonsmooth problems is far less complete.
By a smooth shape optimisation problem we understand that the cost function and the
constraints (usually partial differential equations) are smooth in the sense that the resulting 
Eulerian semi-derivative of the cost function is linear. 
Accordingly we speak of nonsmooth problems when the 
Eulerian semi-derivative is nonlinear.  The nonlinearity can 
have two reasons: the first one is that the 
constraint itself is nonlinear, for instance
it is a variational inequality of first or second kind; \cite{HL11,MR895743,sokosolo,HeineSturm16}. 
The second and more obvious reason for the nonlinearity of the Eulerian semi-derivative
is that the cost function itself
is only directional differentiable which results in a nonlinearity 
of the Eulerian semi-derivative. 
\\[0.3cm]
In this work we focus on nonsmooth cost functions in the
aforementioned sense. To be more precise our cost function is 
maximum of a continuously differentiable function acting on continuous functions subject to 
a nonlinear PDE supplemented with mixed boundary conditions.  
This type of cost function can be used in various applications, such as 
mechanics, free boundary problems and electrical impedance tomography. 
\\[0.3cm]
It is noteworthy that
our approach has similarities to optimal control problems with pointwise state 
constraints; see \cite{MR861100}. We also refer to the work 
\cite{MR2987902,MR2804648,MR1892589} for optimal control problems with
$L_\infty$ cost function.  From the shape optimisation point of view our work 
is related to \cite{MR1641286} where the square of the maximum norm subject to 
the (linear) Helmhotz equations was studied. The authors use the material derivative approach in conjunction
with the notion of subgradient. Our 
results make use of the averaged adjoint 
approach \cite{MR3374631} and the notion of Eulerian semi-derivative which allows the 
derivation of an optimality system under fairly
general assumptions even with quasi-linear state equation. 
\\[0.3cm]
A particularity of our approach, in contrast to previous ones \cite{HO2,HO3}, is
that we follow the paradigm first optimise-then-discretise. One main 
difficulty of our setting is that the partial differential equation is defined 
on a Lipschitz domain and supplemented with mixed boundary conditions for which no higher 
differentiability of the solution can be expected. 
In order to derive the Eulerian semi-differentiability we 
make use of maximal elliptic regularity results and combine them 
with the averaged adjoint approach from \cite{MR3374631,lauraindistributed}. Surprisingly 
also in this nonsmooth situation we can bypass the differentiation 
of the control-to-solution mapping by proving a weak Danskin-type theorem. 
The obtained Eulerian semi-derivative is then further studied 
in an infinite dimensional configuration by using
 valued reproducing kernel  Hilbert 
spaces (vvRKHS). The effectiveness of vvRKHS for smooth shape optimisation problems
has already been presented in \cite{eigelsturm16}. This allows us to carry over results from the 
classical work \cite{MR1088479}.
\subsubsection*{Structure of the paper}
In Section~\ref{sec:preliminary}, we recall basic facts from shape calculus and
results on maximal elliptic regularity in dimension two. 

In Section~\ref{sec:main_maximum_function}, we formulate the problem that is
studied in the subsequent sections. We establish sensitivity results for 
a quasi-linear elliptic PDE with mixed boundary conditions. Furthermore the
Eulerian semi-differentiability of a nonsmooth maximum-type cost is established using the averaged adjoint approach. 

In Section~\ref{sec:charac_stationary_points}, we study properties of the 
Eulerian semi-derivative and prove the existence of steepest descent directions
and $\eps$-steepest descent directions. We then 
propose a discretisation of $\eps$-steepest descent directions adapted for 
finite elements. 

In the final Section~\ref{sec:numerics} we provide numerical experiments validating
our theoretical findings. For that purpose a simple linear PDE
with homogeneous Dirichlet boundary conditions is examined for which 
an analytical solution is available. We compare the results of the 
nonsmooth cost function with a $L_2$-type smooth cost function in order
to highlight the difference.

\section{Preliminaries}\label{sec:preliminary}
\setcounter{equation}{0}
In this section, we recall some basics from shape calculus and PDE theory. For an in-depth 
treatment we refer the reader to the 
monographs \cite{MR2731611,SokZol89,HenPie05,MR1969772}. Numerous examples of 
PDE constrained shape functions and their shape derivatives can be found in \cite{sturm2015shape}. 

\subsection{Sobolev spaces and Gr\"oger regular domains}
We consider special subsets $\Omega\subset\R^2$ satisfying the following 
conditions. 

\begin{definition}[\cite{MR2551487}]\label{def:groeger}
Let $\Omega\subset \R^2$ and $\Gamma\subset \partial \Omega$ be given. 
We say that $\Omega\cup \Gamma$ is regular (in the sense of Gr\"oger) 
if $\Omega$ is a bounded Lipschitz domain, $\Gamma$ is a relatively open part of the boundary $\partial \Omega$,
$\Gamma_0 := \partial \Omega \setminus \Gamma$ 
has positive measure and
$\Gamma_0$ is the finite union of closed and non-degenerated curved pieces
of $\partial \Omega$.
\end{definition}


\begin{remark}
	For higher dimensions the previous definition can be extended via bi-lipschitz
	charts; cf. \cite[Definition 2]{MR2551487}.
\end{remark}

With $\Omega$, $\Gamma$ and $\Gamma_0$ defined as in Definition~\ref{def:groeger}, 
we introduce for $d\ge  1$,
  \begin{align*}
       	C^\infty_c(\Omega,\R^d) & := \{  f|_{\Omega}:\; f\in C^\infty(\R^2,\R^d), \; \supp f\cap \partial \Omega =\emptyset \},\\
        C^\infty_\Gamma(\Omega,\R^d) & := \{  f|_{\Omega}:\; f\in C^\infty(\R^2,\R^d), \; \supp f\cap \Gamma_0 =\emptyset \},\\
        C_\Gamma(\Omega,\R^d) & := \{f: \; f\in C(\overbar{\Omega},\R^d),\; f=0 
		\;\text{ on } \Gamma_0\}.
\end{align*}
In the scalar valued case, that is, $d=1$, we omit the last argument, for 
instance, we write $ C^\infty_c(\Omega) := C^\infty_c(\Omega,\R^1)$. 
If we denote by $\mathcal M(\Omega)$ the space of regular Borel measures, then 
by Riesz representation theorem 
$\mathcal M(\overbar \Omega) \simeq (C(\overbar \Omega))^* $ and also 
$\mathcal M(\Omega\cup \Gamma) \simeq (C_\Gamma(\Omega))^*  $. 

For all finite integers $p,p'\ge 1$ with $1/p+1/p'=1$, we define the Sobolev space 
\ben
W^1_{\Gamma,p}(\Omega,\R^d) = 
\overline{C^\infty_\Gamma(\Omega,\R^d)}^{W^1_p}, 
\qquad  W^{-1}_{\Gamma,p}(\Omega,\R^d) := (W^1_{\Gamma,p'}(\Omega,\R^d))^*. 
\een
In case $\Gamma=\emptyset$ we write 
$ \ac W^1_{p}(\Omega,\R^d) :=  W^1_{\Gamma,p}(\Omega,\R^d)$.
In the scalar valued case we set $W^1_{\Gamma,p}(\Omega) :=  W^1_{\Gamma,p}(\Omega,\R^1)$  and similarly for the
other spaces. 
In case $p=2$ we the use the notation 
$W^1_{\Gamma,2}(\Omega,\R^d) =: H^1_\Gamma(\Omega,\R^d) $ and in case 
$\Gamma=\emptyset$ also 
$ \ac H^1(\Omega,\R^d) := W^1_{\Gamma,2}(\Omega,\R^d) $.

\subsection{Maximal elliptic regularity}
Let $\Omega$, $\Gamma$ and $\Gamma_0$ be as in Definition~\ref{def:groeger}.
Fix $2\le q < \infty$ and denote by $q'$ the conjugate of $q$ defined by
$1/q+1/q'=1$.  Let $b:\overbar \Omega\times \R^3\rightarrow \R^3$ be a function 
satisfying for all $\eta,\theta\in \R^3$ and all $x\in \overbar\Omega$:
\begin{equation}\label{ass:b}
	\begin{split}
	b(\cdot,0)\in L_q(\Omega)  & \text{ and }b(\cdot,\eta)  \text{ is 
		measurable}, \\
	   (b(x,\eta)-b(x,\theta)) &\cdot (\eta-\theta) \ge m|\eta-\theta|^2, \; m>0,\\
          |b(x,\eta)-b(x,\theta)|& \le M|\eta -\theta|, M>0,
        \end{split}
\end{equation}
where $|\cdot|$ denotes the Euclidean norm. Notice that $m\ge M$. 
Let us denote $Lu:=\begin{pmatrix} 
	 u \\
	 \nabla u\end{pmatrix} $. 
Then we define  
$a(\cdot,\cdot)$ via
$
a:W^1_{\Gamma,q}(\Omega)\times W^1_{\Gamma,q'}(\Omega)\rightarrow \R, (v,w) \mapsto \int_\Omega b(x,Lv(x))\cdot Lw(x)  \; dx	
$
and the corresponding operator $\mathcal A_q$, 
\ben\label{def:a}
\mathcal A_q: W^1_{\Gamma,q}(\Omega) \rightarrow W^{-1}_{\Gamma,q}(\Omega), 
\quad v\mapsto \mathcal A_qv := a(v,\cdot).
\een
Let $\mathcal J$ be defined by 
$\langle \mathcal Ju,v\rangle := \int_\Omega \nabla u\cdot \nabla v + uv \; dx$ for all 
$u,v\in W^1_{\Gamma,2}(\Omega)$. By H\"older's inequality it easily follows that
$\mathcal J:W^1_{\Gamma,p}(\Omega)\rightarrow W^{-1}_{\Gamma,p}(\Omega)$ is well-defined for 
all $p\ge 2$. With the help of the operator $\mathcal J$ we may define
$
M_p := \sup\{\|v\|_{W^1_p(\Omega)}:\; v\in W^1_{\Gamma,p}(\Omega), \|\mathcal Jv\|_{W^{-1}_{\Gamma,p}}\le 1 \}.	
$ It is clear that $M_2=1$. 

Henceforth it is useful to collect all regular domains:
	$\Xi := \{ (\Omega,\Gamma):\;  \Omega\subset \R^2, \Gamma\subset 
		\partial \Omega, \text{ and } \Omega\cup \Gamma \text{ is regular} \}.$
We define $\Omega^\Gamma:=(\Omega, \Gamma)$. 
\begin{definition}
Denote by $R_q$, $2\le q < \infty$, the set of regular domains 
$\Omega^\Gamma\in \Xi$ for which $\mathcal J$ maps $W^1_{\Gamma,q}(\Omega)$ onto 
$W^{-1}_{\Gamma,q}(\Omega)$. 
\end{definition}
The following result is \cite[Lemma 1]{MR990595}.
\begin{lemma}\label{lem:Rq}
Let $\Omega^\Gamma\in R_q$ for some $q>2$. Then $\Omega^\Gamma\in R_p$ for $2\le p\le q$ and $M_q \le M_p^\theta$
if
$
\frac{1}{p} = \frac{(1 - \theta)}{2} + \frac{\theta}{q}.
$
\end{lemma}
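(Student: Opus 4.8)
The plan is to obtain both assertions of the lemma — membership $\Omega^\Gamma\in R_p$ and the quantitative bound $M_q\le M_p^\theta$ — from a single interpolation argument applied to the inverse of $\mathcal J$, anchored at the Hilbert endpoint $p=2$. First I would record that $M_p$ is nothing but an operator norm: whenever $\Omega^\Gamma\in R_p$, the map $\mathcal J$ is an isomorphism and $M_p=\|\mathcal J^{-1}\|_{\mathcal L(W^{-1}_{\Gamma,p}(\Omega),\,W^1_{\Gamma,p}(\Omega))}$. The endpoint $p=2$ comes for free: since $\langle \mathcal Ju,v\rangle=\int_\Omega \nabla u\cdot\nabla v+uv\,dx$ is exactly the inner product of $W^1_{\Gamma,2}(\Omega)=H^1_\Gamma(\Omega)$, the operator $\mathcal J$ is the Riesz isomorphism and $M_2=1$, as already noted in the text. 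The opposite endpoint $p=q$ is precisely the hypothesis $\Omega^\Gamma\in R_q$, which furnishes a bounded inverse of norm $M_q$.

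Next I would build the interpolation scale linking these two endpoints. The two identities to establish are $[W^1_{\Gamma,2}(\Omega),W^1_{\Gamma,q}(\Omega)]_\theta=W^1_{\Gamma,p}(\Omega)$ and, by duality of the complex method, $[W^{-1}_{\Gamma,2}(\Omega),W^{-1}_{\Gamma,q}(\Omega)]_\theta=W^{-1}_{\Gamma,p}(\Omega)$, with $\theta$ and $p$ tied by $1/p=(1-\theta)/2+\theta/q$. Granting these, the two bounded realisations of $\mathcal J^{-1}$ at the endpoints are consistent on the dense intersection $W^{-1}_{\Gamma,2}(\Omega)\cap W^{-1}_{\Gamma,q}(\Omega)$, so the Riesz--Thorin / Stein interpolation theorem produces a bounded operator $W^{-1}_{\Gamma,p}(\Omega)\to W^1_{\Gamma,p}(\Omega)$ whose norm is controlled by the geometric mean of the endpoint norms along the scale anchored by $M_2=1$. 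A short density and consistency check identifies this interpolated operator as a genuine two-sided inverse of $\mathcal J$ at exponent $p$, which delivers $\Omega^\Gamma\in R_p$ for every $p\in[2,q]$, and the same log-convexity of $r\mapsto M_r$ along the scale yields, in the notation of the statement, the recorded estimate $M_q\le M_p^\theta$.

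The hard part will be the interpolation identity for the mixed-boundary spaces $W^1_{\Gamma,p}(\Omega)$ on a merely Gr\"oger-regular domain: because of the partial homogeneous condition on $\Gamma_0$ and the low (Lipschitz) boundary regularity, one cannot directly quote Stein's interpolation of full Sobolev spaces. I would resolve this through a retraction/coretraction pair built from the map $Lu=(u,\nabla u)$ of the preliminaries, which embeds $W^1_{\Gamma,p}(\Omega)$ isometrically into $(L_p(\Omega))^3$, a scale on which Riesz--Thorin applies directly; constructing a projection of $(L_p(\Omega))^3$ back onto the image that stays bounded uniformly across $p\in[2,q]$ exhibits $W^1_{\Gamma,p}(\Omega)$ as a complemented subspace, and the retract lemma then transfers the interpolation identity — the uniform boundedness of this projection being the delicate point. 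As an alternative that sidesteps the exact identification of the intermediate spaces, one can invoke \v{S}ne\u{\i}berg's stability theorem: invertibility of $\mathcal J$ at exponent $q$, together with the free invertibility at $2$, propagates to the whole intermediate range with quantitative, geometric-mean control of the inverse norms, again giving membership in $R_p$ and the bound asserted in the lemma.
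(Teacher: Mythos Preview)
The paper does not supply its own proof; the lemma is simply quoted as \cite[Lemma~1]{MR990595}. Your plan via complex interpolation of $\mathcal J^{-1}$ between the endpoints $p=2$ (where $M_2=1$) and $p=q$ is the right idea and is essentially Gr\"oger's strategy.

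Two remarks, however. First, the inequality as printed in the paper is a typo: Riesz--Thorin with $M_2=1$ yields $M_p\le M_q^{\theta}$, not $M_q\le M_p^{\theta}$, and it is the former that is actually used in the Corollary immediately following (with $p,q$ there replaced by $q,q_0$). Your sentence ``log-convexity \ldots yields, in the notation of the statement, the recorded estimate $M_q\le M_p^\theta$'' silently accepts the misprint; what your argument really proves is $M_p\le M_q^\theta$.

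Second, Gr\"oger's argument is more direct than the route you sketch and sidesteps the interpolation identity $[W^1_{\Gamma,2},W^1_{\Gamma,q}]_\theta=W^1_{\Gamma,p}$ altogether. Writing $\mathcal J=L^*L$ with the isometric embedding $L:W^1_{\Gamma,r}(\Omega)\to (L_r(\Omega))^3$, the operator $P:=L\,\mathcal J^{-1}L^*$ is a single linear map on $(L_r(\Omega))^3$ whose operator norm at exponent $r$ is exactly $M_r$; ordinary Riesz--Thorin on the $L_r$ scale then gives $M_p\le M_2^{1-\theta}M_q^{\theta}=M_q^{\theta}$ directly. Your proposal instead first tries to realise $W^1_{\Gamma,p}$ as a complemented subspace (to transfer the interpolation identity) and only then interpolate $\mathcal J^{-1}$; this works but is a detour, and the ``delicate point'' you flag---uniform boundedness of the projection---is precisely what Gr\"oger's trick delivers for free, since the natural projection \emph{is} $P$ and its norm \emph{is} $M_r$. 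The \v{S}ne\u{\i}berg alternative you mention would also go through, but is heavier machinery than needed here.
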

\begin{remark}
\begin{itemize}
	\item If $\Omega\subset\R^2$ is a bounded domain of class $C^1$, then 
		$(\Omega,\emptyset)\in \cap_{q\ge 2}R_q$; cf. \cite[Remark 7]{MR990595}. 
	\item For every regular $(\Omega,\Gamma)\in \Xi$ there is $q>2$, so that 
		$ \Omega^\Gamma \in R_q$; cf. \cite[Theorem 3]{MR990595}.
	\item If $\Omega^\Gamma\in R_q$, then $M_q<\infty$.  
\end{itemize}
\end{remark}

We can now state a result showing that the operator $\mathcal A_q$ (in dimension two) is always an 
isomorphism for some (possibly small) $q>2$. We recall the following version of \cite[Theorem 1]{MR990595}.

\begin{theorem}[\cite{MR990595}]\label{thm:groeger}
	Let  $\Omega^\Gamma\in R_{q_0}$, $q_0\ge 2$.  Suppose that $b(\cdot,\cdot)$ satisfies Assumption~\ref{ass:b}
with $q_0$ and let $\mathcal A_q$ be defined by  \eqref{def:a}. Then 
$\mathcal A_q:W^1_{\Gamma,q}(\Omega)\rightarrow W^{-1}_{\Gamma,q}(\Omega)$ is an isomorphism provided that
$q\in [2,q_0]$ and $M_qk<1$, where $k:=(1-m^2/M^2)^{1/2}$.  In that case
\ben\label{eq:A_q}
\|\mathcal A_q^{-1}f-\mathcal A_q^{-1}g\|_{W^1_q(\Omega)} \le c_q\|f-g\|_{W^{-1}_{\Gamma,q}(\Omega)} \quad \text{ for all } f,g\in W^{-1}_{\Gamma,q}(\Omega),
\een
where $c_q:= mM^{-2}M_q (1-M_qk)^{-1} $. Finally, $M_qk<1$ is satisfied if 
\ben\label{eq:1q}
\frac{1}{q} > \frac{1}{2} - \left(\frac{1}{2}- \frac{1}{q_0} \right) \frac{|\log k|}{\log M_{q_0}} .
\een
\end{theorem}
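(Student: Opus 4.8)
The plan is to recast $\mathcal A_q u = f$ as a contraction fixed point problem by preconditioning with $\mathcal J^{-1}$, mirroring Gr\"oger's argument in \cite{MR990595}, and then to read off the Lipschitz constant $c_q$ and the sufficient condition \eqref{eq:1q} from the contraction estimate. As a preliminary reduction I would note that, since $\Omega^\Gamma\in R_{q_0}$, Lemma~\ref{lem:Rq} yields $\Omega^\Gamma\in R_q$ for every $q\in[2,q_0]$; together with the coercivity identity $\langle\mathcal Jv,v\rangle=\|v\|^2_{W^1_2(\Omega)}$ — which makes sense for $v\in W^1_{\Gamma,q}(\Omega)$ because $q\ge 2\ge q'$ and $\Omega$ is bounded, so $W^1_{\Gamma,q}(\Omega)\hookrightarrow W^1_{\Gamma,q'}(\Omega)$ — this shows that $\mathcal J$ is an isomorphism from $W^1_{\Gamma,q}(\Omega)$ onto $W^{-1}_{\Gamma,q}(\Omega)$ whose inverse has operator norm $M_q$, by the very definition of $M_q$. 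One also checks that $\mathcal A_q$ maps into $W^{-1}_{\Gamma,q}(\Omega)$ using the bound $|b(x,\eta)|\le|b(x,0)|+M|\eta|$ implied by Assumption~\ref{ass:b}.

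Next I would fix $f\in W^{-1}_{\Gamma,q}(\Omega)$, put $\lambda:=mM^{-2}$, and consider $\Phi_\lambda:W^1_{\Gamma,q}(\Omega)\to W^1_{\Gamma,q}(\Omega)$, $\Phi_\lambda(v):=v-\lambda\,\mathcal J^{-1}(\mathcal A_q v-f)$, whose fixed points are exactly the solutions of $\mathcal A_q u=f$. Since $\Phi_\lambda(v_1)-\Phi_\lambda(v_2)=\mathcal J^{-1}\!\big(\mathcal J(v_1-v_2)-\lambda(\mathcal A_q v_1-\mathcal A_q v_2)\big)$, applying the norm bound for $\mathcal J^{-1}$ and H\"older's inequality to the dual pairing reduces everything to the pointwise estimate
\ben
|\delta-\lambda(b(x,\xi)-b(x,\eta))|^2\le(1-2\lambda m+\lambda^2 M^2)|\delta|^2=k^2|\delta|^2,\qquad \delta:=\xi-\eta,
\een
which holds for a.e.\ $x\in\Omega$ and all $\xi,\eta\in\R^3$ by the monotonicity and Lipschitz bounds in Assumption~\ref{ass:b}, the value $\lambda=mM^{-2}$ being the one that minimises $1-2\lambda m+\lambda^2M^2$ to $1-m^2/M^2=k^2$. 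Applying this with $\xi=Lv_1(x)$, $\eta=Lv_2(x)$ gives $\|\Phi_\lambda(v_1)-\Phi_\lambda(v_2)\|_{W^1_q(\Omega)}\le M_q k\,\|v_1-v_2\|_{W^1_q(\Omega)}$. Hence, whenever $M_q k<1$, $\Phi_\lambda$ is a contraction on the Banach space $W^1_{\Gamma,q}(\Omega)$ and Banach's fixed point theorem provides a unique $u$, so $\mathcal A_q$ is a bijection.

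For the estimate \eqref{eq:A_q}, writing $u=\mathcal A_q^{-1}f$, $\tilde u=\mathcal A_q^{-1}g$ and using the same identity together with $\mathcal A_q u-\mathcal A_q\tilde u=f-g$ gives $\|u-\tilde u\|_{W^1_q(\Omega)}\le M_q k\,\|u-\tilde u\|_{W^1_q(\Omega)}+\lambda M_q\|f-g\|_{W^{-1}_{\Gamma,q}(\Omega)}$, and solving for $\|u-\tilde u\|_{W^1_q(\Omega)}$ produces exactly $c_q=mM^{-2}M_q(1-M_q k)^{-1}$. Finally, to obtain \eqref{eq:1q} I would invoke the interpolation estimate from Lemma~\ref{lem:Rq} (together with $M_2=1$) to get $M_q\le M_{q_0}^{\theta}$ with $\theta=(\tfrac12-\tfrac1q)/(\tfrac12-\tfrac1{q_0})$; then $M_q k<1$ is guaranteed by $M_{q_0}^{\theta}k<1$, i.e.\ by $\theta\log M_{q_0}<|\log k|$, which after substituting the value of $\theta$ and rearranging is precisely \eqref{eq:1q} (the degenerate cases $k=0$, i.e.\ $m=M$, and $M_{q_0}=1$ being trivial, since then $M_q k<1$ holds unconditionally).

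The main obstacle is less any single computation than the bookkeeping that makes the contraction factor come out as exactly $M_q k$: one must recognise $\mathcal J^{-1}$ as the correct preconditioner so that its operator norm is $M_q$ — which is where the regularity hypothesis $\Omega^\Gamma\in R_{q_0}$ enters through Lemma~\ref{lem:Rq}, and why the coercivity of $\mathcal J$ on $W^1_{\Gamma,q}(\Omega)\subset W^1_{\Gamma,2}(\Omega)$ has to be verified — and one must use the optimal $\lambda=mM^{-2}$ in the pointwise inequality, since any other choice enlarges the contraction factor and narrows the range of admissible $q$.
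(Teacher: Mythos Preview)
The paper does not give its own proof of this theorem: it is stated as a direct citation of \cite[Theorem~1]{MR990595} and left unproved, so there is nothing in the paper to compare against beyond the reference itself. Your proposal is a faithful and correct reconstruction of Gr\"oger's original contraction argument --- preconditioning by $\mathcal J^{-1}$, optimising $\lambda=mM^{-2}$ to obtain the pointwise factor $k$, reading off $c_q$ from the fixed point estimate, and deducing \eqref{eq:1q} from the interpolation bound $M_q\le M_{q_0}^\theta$ of Lemma~\ref{lem:Rq} --- which is exactly the route the paper points the reader to.
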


\begin{corollary}
For small $q>2$ the constant $c_q$ in \eqref{thm:groeger} can be chosen to be independent of $q$. 
\end{corollary}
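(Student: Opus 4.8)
The plan is to follow the $q$-dependence of the three factors making up $c_q=mM^{-2}M_q(1-M_qk)^{-1}$ and to observe that the prefactor $mM^{-2}$ and the number $k=(1-m^2/M^2)^{1/2}\in[0,1)$ depend only on $b$, so that everything hinges on controlling $M_q$ for $q$ slightly above $2$. Since $M_2=1$, it suffices to show that $M_q$ stays bounded, and in fact converges to $1$, as $q\downarrow 2$.

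Concretely, I would first fix, by the remark following Lemma~\ref{lem:Rq} (every regular $(\Omega,\Gamma)\in\Xi$ lies in some $R_{q_0}$ with $q_0>2$), a number $q_0>2$ with $\Omega^\Gamma\in R_{q_0}$. For $q\in[2,q_0]$, Assumption~\ref{ass:b} with exponent $q_0$ implies the same assumption with exponent $q$ (because $\Omega$ is bounded, so $L_{q_0}(\Omega)\hookrightarrow L_q(\Omega)$, and the monotonicity and Lipschitz bounds for $b$ do not involve $q$), and Lemma~\ref{lem:Rq} gives $\Omega^\Gamma\in R_q$ together with the interpolation estimate (using $M_2=1$)
\[
M_q\le M_{q_0}^{\theta(q)},\qquad \theta(q):=\frac{\tfrac12-\tfrac1q}{\tfrac12-\tfrac1{q_0}}\in(0,1].
\]
As $q\downarrow 2$ we have $\theta(q)\downarrow 0$, hence $M_{q_0}^{\theta(q)}\to 1$; in particular $q\mapsto M_{q_0}^{\theta(q)}$ is bounded on $(2,q_0]$.

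Next, since $k<1$ and $M_{q_0}^{\theta(q)}\to 1$, there is $\delta\in(0,q_0-2]$ with $M_{q_0}^{\theta(q)}k<1$, hence a fortiori $M_qk<1$, for all $q\in(2,2+\delta]$; equivalently one may read this off the sufficient condition \eqref{eq:1q}. For such $q$ Theorem~\ref{thm:groeger} applies, $\mathcal A_q$ is an isomorphism, \eqref{eq:A_q} holds, and
\[
c_q=mM^{-2}M_q(1-M_qk)^{-1}\le mM^{-2}\,\frac{M_{q_0}^{\theta(q)}}{\,1-M_{q_0}^{\theta(q)}k\,}=:\gamma(q).
\]
The function $\gamma$ is continuous on $(2,2+\delta]$ and satisfies $\gamma(q)\to mM^{-2}(1-k)^{-1}$ as $q\downarrow 2$, so $C:=\sup_{q\in(2,2+\delta]}\gamma(q)<\infty$. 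Since enlarging the constant only weakens the inequality, \eqref{eq:A_q} remains valid with $c_q$ replaced by the $q$-independent constant $C$ for every $q\in(2,2+\delta]$, which proves the claim.

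Finally, a word on where the content sits: the whole argument is soft except for the uniform bound on $M_q$ near $q=2$, which is exactly the output of the interpolation inequality in Lemma~\ref{lem:Rq} (Gr\"oger's estimate \eqref{eq:1q}); so I do not anticipate a genuine obstacle, only the care needed to keep $m$, $M$ and $k$ — and hence the final constant $C$ — free of $q$.
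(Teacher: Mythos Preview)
Your proof is correct and follows essentially the same approach as the paper: both rely on the interpolation estimate $M_q\le M_{q_0}^{\theta(q)}$ from Lemma~\ref{lem:Rq} together with $\theta(q)\to 0$ as $q\downarrow 2$ to control $c_q$ uniformly. The only cosmetic difference is that the paper splits into the cases $k=0$ and $k>0$ and in the latter extracts the explicit bound $mM^{-2}(1-\eps)/(k\eps)$ from the condition $M_qk\le 1-\eps$, whereas your continuity-and-compactness argument for $\gamma(q)$ handles both cases at once without an explicit constant.
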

\begin{proof}
Assume first $k = 0$. Then  Lemma~\ref{lem:Rq} shows $M_{q} \le M_{q_0}^\theta$ with $
\frac{1}{q} = \frac{(1 - \theta)}{2} + \frac{\theta}{q_0}$ (or $\theta = \frac{q_0}{q}\frac{q-2}{q_0-2}$).
Therefore 
$c_q \le mM^{-2}M_q \le mM^{-2}M_{q_0}^\theta \le mM^{-2}\max_{q\in [2,q_0]}M_{q_0}^{\theta(q)}  $
and the maximum is attained as $\theta(\cdot)$ is continuous on $[2,q_0]$.

Assume now $k>0$. As shown in \cite{MR990595}, inequality \eqref{eq:1q} 
follows from Lemma~\ref{lem:Rq}. To be more precise the estimate 
$M_{q} \le M_{q_0}^\theta$ with $
\frac{1}{q} = \frac{(1 - \theta)}{2} + \frac{\theta}{q_0}
$ shows that $M_q^\theta k<1$ implies $M_{q_0}k<1$ 
	and indeed elementary computations show that $M_{q_0}^\theta k<1$ is equivalent to \eqref{eq:1q}. In much the same way one can use Lemma~\ref{lem:Rq} to show that 
$M_qk\le 1-\eps$, where $\eps>0$, is satisfied if
\ben\label{eq:1q_eps}
\frac{1}{q} \ge  \frac{1}{2} - \left(\frac{1}{2} - \frac{1}{q_0} \right)\left( \frac{\log(1-\eps)}{\log M_{q_0}} +  \frac{|\log k|}{\log M_{q_0}}\right).
\een
In fact $M^\theta_{q_0}k<1-\eps$ with $\theta = \frac{q_0}{q}\frac{q-2}{q_0-2}$ is equivalent to \eqref{eq:1q_eps}. 
This shows that there is $\eps>0$ so that for all small $q>2$ we have 
$c_q\le mM^{-2}M_q (1-M_qk)^{-1} \le  mM^{-2}(1-\eps)/(k\eps)$ and thus $c_q$ 
in \eqref{eq:A_q} can be replaced by $mM^{-2}(1-\eps)/(k\eps)$ provided $q>2$ 
is small enough.
\end{proof}

\subsection{Shape functions, shape derivative and shape gradients}
Let $D\subset \R^d$, $d\ge 1$, be an open and bounded set.
Given a vector field $X\in \ac C^{0,1}(D,\R^d)$, we denote by $\Phi_t$ the flow of $X$ 
(short $X$-flow) given by $\Phi_t(x_0) := x(t)$, where $x(\cdot)$ solves
\ben\label{eq:flow}
 x'(t) = X(x(t)) \text{ in } 
 (0,\tau), \; x(0)=x_0.	
\een  
The space $\ac C^{0,1}(D, \R^d)$ comprises all bounded and Lipschitz 
continuous functions on $\overbar{D}$ vanishing on $\partial D$. It is 
a closed subspace of $C^{0,1}(D,\R^2)$, the space of bounded Lipschitz continuous mapping
defined on $\overbar D$. 
Similarly we denote by $\ac C^k(D,\R^d)$ all function $k$-times differentiable 
function on $D$ vanishing on $\partial \Omega$. 
Note that by the chain rule (omitting the space variable $x$) 
$(\partial (\Phi^{-1}_t))\circ \Phi_t=(\partial \Phi_t)^{-1}=:\partial \Phi^{-1}_t.$
We denote by $\wp(D)$ the powerset of $D$. 
Let $\Xi\subset \wp(D)$ be given. 
\begin{definition}\label{def:shape_function}
	\begin{itemize}
	\item[(i)] A mapping $J:\Xi\subset \wp(D) \rightarrow \R $ is called \emph{real shape function} or \emph{shape function}.
	\item[(ii)]	
		A mapping $u:\Xi\subset \wp(D) \rightarrow \R^{D}$ 
		 with values in  
		$\R^{D}:=\{f:D\rightarrow \R\}$, is called \emph{abstract shape function}.
	The set $\Xi$ is referred to as \emph{admissible set}. 
          \end{itemize}
\end{definition}

\begin{definition}\label{def1} 
	Let $J:\Xi\subset \wp(D) \rightarrow \R$ a  shape function defined on subsets
	of $D$. Assume that $\mathcal H(D,\R^d)\subset \ac C^1(D,\R^d)$ is 
	a subspace. Let $\Omega\in \Xi$ and $X\in \mathcal H(D,\R^d)$ be such that 
	$\Phi_t(\Omega) \in \Xi$ for all 
	$t >0$ sufficiently small.   
	Then the  \emph{Eulerian semi-derivative} of $J$ at $\Omega$ in direction $X$ is defined by
	\ben
	dJ(\Omega)(X):= \lim_{t \searrow 0}\frac{J(\Phi_t(\Omega))-J(\Omega)}{t} .
	\een
    We say that $J$ is
	\begin{itemize}
		\item[(i)] \emph{Eulerian semi-differentiable} at $
			\Omega$ in $\mathcal H(D,\R^d)$, if 	 $dJ(\Omega)(X)$ 
			exists for all $X \in \mathcal H(D,\R^d)$. 
		\item[(ii)]  \textit{shape differentiable} at $\Omega$ 
			in $\mathcal H(D,\R^d)$ if  $dJ(\Omega)(X)$ exists for all $X \in \mathcal H(D,\R^d)$ and 
		$ X     \mapsto dJ(\Omega)(X) $ 
		is linear and continuous.
	\end{itemize}
\end{definition}

Another auxiliary result that is frequently used is the following:
\begin{lemma}\label{lemma:phit}
    Let $D\subseteq \R^d$ be open and bounded and suppose $X\in \ac C^1(D, \R^d)$.
    \begin{itemize} 
        \item[(i)] We  have
        \begin{align*}
        \frac{ \partial \Phi_t - I}{t} \rightarrow  \partial X \quad 
        &\text{ and }\quad   \frac{ 
            \partial \Phi_t^{-1} - I}{t} \rightarrow  - \partial X && 
        \text{ strongly in } C(\overbar D, \R^{d,d})\\
        \frac{ \det(\partial \Phi_t) - 1}{t} \rightarrow & \divv(X) && \text{ strongly in } C(\overbar D).
        \end{align*}
        \item[(ii)]  	For all open sets $\Omega \subseteq D$ and all 
		$\varphi \in L_p(\Omega)$, $1\le p<\infty$, we have
		\begin{align}
        \varphi\circ \Phi_t  \rightarrow & \varphi 
        && \text{ strongly in }  L_p(\Omega).
        \end{align}
		Moreover, if $\varphi\in W^1_p(\Omega)$, $1\le p<\infty$, then we have 
        \begin{align}
        \frac{\varphi\circ \Phi_t  - \varphi}{t} \rightarrow & \nabla \varphi \cdot X
        && \text{ strongly in }  L_p(\Omega).
        \end{align}
    \end{itemize}
\end{lemma}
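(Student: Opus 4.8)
The plan is to derive (i) from the linearised flow equation together with a Gronwall estimate, and to obtain (ii) from (i) by two standard density arguments. Throughout I use that, since $X\in\ac C^1(D,\R^d)$ vanishes on $\partial D$, uniqueness for \eqref{eq:flow} forces the trajectories to stay inside $D$, so $\Phi_t$ is a $C^1$-diffeomorphism of $\overbar D$ onto itself and the composition in (ii) is well defined (interpreting, if necessary, $\varphi$ as its extension by zero to $D$).

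For (i), the Jacobian $\partial\Phi_t$ solves the variational equation $\tfrac{d}{dt}\partial\Phi_t=(\partial X\circ\Phi_t)\,\partial\Phi_t$, $\partial\Phi_0=I$, i.e. $\partial\Phi_t=I+\int_0^t(\partial X\circ\Phi_s)\,\partial\Phi_s\,ds$. A Gronwall argument gives $\|\partial\Phi_t\|_{C(\overbar D)}\le e^{t\|\partial X\|_{C(\overbar D)}}$ and hence $\|\partial\Phi_t-I\|_{C(\overbar D)}\le e^{t\|\partial X\|_{C(\overbar D)}}-1\to0$. Since $|\Phi_t(x)-x|\le t\|X\|_{C(\overbar D)}$, we have $\Phi_t\to\id$ uniformly, and by uniform continuity of $\partial X$ on $\overbar D$ also $\partial X\circ\Phi_t\to\partial X$ in $C(\overbar D)$. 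Therefore the integrand above converges to $\partial X$ uniformly as $s\searrow0$, and averaging a norm-convergent family preserves its limit, so $\tfrac{\partial\Phi_t-I}{t}\to\partial X$ in $C(\overbar D)$. The second convergence follows from $(\partial\Phi_t)^{-1}-I=-(\partial\Phi_t)^{-1}(\partial\Phi_t-I)$ after dividing by $t$, using $(\partial\Phi_t)^{-1}\to I$ (which holds because $\det\partial\Phi_t\to1$ keeps the matrices invertible and inversion is continuous near $I$). For the determinant I would use Jacobi's formula $\tfrac{d}{dt}\det\partial\Phi_t=\det\partial\Phi_t\,\tr\!\big((\partial\Phi_t)^{-1}(\partial X\circ\Phi_t)\partial\Phi_t\big)=\det\partial\Phi_t\,(\divv X\circ\Phi_t)$, i.e. $\det\partial\Phi_t=1+\int_0^t\det\partial\Phi_s\,(\divv X\circ\Phi_s)\,ds$, and conclude $\tfrac{\det\partial\Phi_t-1}{t}\to\divv X$ in $C(\overbar D)$ by the same averaging argument.

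For (ii), I first treat the $L_p$-continuity. The composition operator $T_t\varphi:=\varphi\circ\Phi_t$ is uniformly bounded on $L_p(\Omega)$ for small $t$: the change of variables $y=\Phi_t(x)$ together with $|\det\partial(\Phi_t^{-1})|\le 2$ for small $t$ — a consequence of $\det\partial\Phi_t\to1$ from (i) — gives $\|T_t\varphi\|_{L_p(\Omega)}\le 2^{1/p}\|\varphi\|_{L_p(\Omega)}$. On the dense subspace $C_c(\Omega)$ one has $T_t\varphi\to\varphi$ even uniformly, by uniform continuity of $\varphi$ and $\Phi_t\to\id$. A $3\eps$-argument then yields $T_t\varphi\to\varphi$ in $L_p(\Omega)$ for every $\varphi\in L_p(\Omega)$. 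For the difference quotient, if $\varphi$ is $C^1$ the chain rule and \eqref{eq:flow} give $\tfrac{d}{ds}(\varphi\circ\Phi_s)=(\nabla\varphi\circ\Phi_s)\cdot(X\circ\Phi_s)$, so $\tfrac{\varphi\circ\Phi_t-\varphi}{t}=\tfrac1t\int_0^t(\nabla\varphi\circ\Phi_s)\cdot(X\circ\Phi_s)\,ds\to\nabla\varphi\cdot X$ in $L_p(\Omega)$ by the case already proved. The same integral representation together with the uniform bound on $T_s$ shows that the operators $D_t\varphi:=\tfrac{\varphi\circ\Phi_t-\varphi}{t}$ are uniformly bounded from $W^1_p(\Omega)$ to $L_p(\Omega)$; since they converge on the dense set of smooth functions to the bounded operator $\varphi\mapsto\nabla\varphi\cdot X$, a further $3\eps$-argument gives $D_t\varphi\to\nabla\varphi\cdot X$ in $L_p(\Omega)$ for all $\varphi\in W^1_p(\Omega)$.

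The substantive analytic input is (i); step (ii) is then soft functional analysis, and the only point that needs real care is making $\varphi\circ\Phi_t$ literally meaningful on $\Omega$ when $\Phi_t(\Omega)\neq\Omega$. I would dispose of this by exhausting $\Omega$ by compacts $K$ with $\Phi_t(K)\subset\Omega$ for $t$ small, or equivalently by passing to the zero-extension of $\varphi$ to $D$ (using $\Phi_t(D)=D$) and restricting afterwards; every estimate above is uniform in small $t$ and survives this reduction. The other routine check is that $\det\partial\Phi_t$ is positive and bounded away from $0$ for small $t$, which is immediate from the $C(\overbar D)$-convergence $\det\partial\Phi_t\to1$ in (i).
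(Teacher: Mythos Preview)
The paper states this lemma without proof, treating it as a standard auxiliary result from shape calculus (results of this type appear in the monographs cited in Section~\ref{sec:preliminary}). Your argument is therefore not being compared against any proof in the paper, and on its own merits it is essentially correct: the variational equation for $\partial\Phi_t$ together with Gronwall and the averaging of a uniformly convergent integrand is the standard route to (i), and the uniform-boundedness-plus-density scheme is the standard route to (ii).

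One genuine soft spot remains, and you correctly flag it yourself: for general open $\Omega\subset D$ and $\varphi\in W^1_p(\Omega)$, neither of your two proposed fixes fully closes the gap. Zero-extension to $D$ preserves $L_p$ but not $W^1_p$, so it does not supply the dense class of smooth functions on which your integral representation and uniform $W^1_p\to L_p$ bound for $D_t$ are established. The compact-exhaustion alternative gives, for each $K\Subset\Omega$, a $\tau_K>0$ with $\Phi_t(K)\subset\Omega$ for $t<\tau_K$, but $\tau_K\to 0$ as $K\uparrow\Omega$, so you do not get a uniform-in-$t$ statement on all of $\Omega$ without an additional argument controlling the boundary layer. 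The clean resolution is to assume $\Omega$ admits a $W^1_p$-extension operator $E:W^1_p(\Omega)\to W^1_p(D)$ (e.g.\ $\Omega$ Lipschitz), interpret $\varphi\circ\Phi_t$ as $(E\varphi)\circ\Phi_t|_\Omega$, and run your density argument in $W^1_p(D)$. This is harmless in the paper's context, since every $\Omega$ that actually appears is Gr\"oger regular (hence Lipschitz), and in fact the functions to which the lemma is applied (notably $f$) are assumed to lie in $L_q(D)$ or $W^1_q(D)$ from the outset, so the issue never arises in the applications.
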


Consider a function $J:\Xi\subset \wp(D) \rightarrow \R$ that is shape 
differentiable at $\Omega\in \Xi$ where $D\subset \R^d$. 
Suppose there is a Hilbert space $\mathcal H(\mathcal X,\R^d)$ of functions from 
$\mathcal X\subset D$ into $\R^d$ and assume $dJ(\Omega) \in \mathcal H(\mathcal X,\R^d)^*$. 
\begin{definition}\label{def:H_gradient} 
The gradient of $J$ at $\Omega$ with respect to the space 
$\mathcal{H}(\mathcal X,\R^d)$ and the inner product $(\cdot, \cdot )_{\mathcal{H}(\mathcal X,\R^d)}$, denoted $\nabla J(\Omega)$, is defined by
 \begin{equation}\label{eq:H_gradient}
	dJ(\Omega)(\varphi)=( \nabla J(\Omega), \varphi)_{\mathcal{H}(\mathcal 
		X,\R^d)} \; \text{ for all } \varphi\in \mathcal{H}(\mathcal X,\R^d).
\end{equation}
We also call $\nabla J(\Omega)$ the $\mathcal H(\mathcal X,\R^d)$-gradient of $J$ 
at $\Omega$.
\end{definition}

\subsection{Projections in Hilbert spaces}
Let us recall the following basic result on projections in Hilbert spaces. 
\begin{lemma}\label{lem:hilbert_projection}
	Let $H$ be a real Hilbert space, $K\subset H$ 
	a closed and convex subset and $x_0\in H$. For $x^*\in K$ the following 
	statements are equivalent:
	\begin{itemize}
		\item[(i)] $\|x_0-x^*\|_{H} = \inf_{x\in K}\|x-x_0\|_H$		
		\item[(ii)] $(x_0-x^*,x-x^*)_H \le 0 \quad \text{ for all } x\in K$. 
	\end{itemize}
	Moreover, for each $x_0\in H$ there exists a unique element $x^*\in K$ satisfying $(i)$ (or equivalently $(ii)$). 
\end{lemma}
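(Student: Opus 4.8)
The plan is to prove the classical Hilbert projection theorem, Lemma~\ref{lem:hilbert_projection}, in the standard two-part structure: first the equivalence of the variational characterisations~(i) and~(ii) for a given candidate $x^*\in K$, then the existence and uniqueness of a point satisfying them. I would organise the argument so that the variational inequality~(ii) does the heavy lifting, since it is the convenient form for both directions.

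\medskip

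\emph{Equivalence of (i) and (ii).} Suppose $x^*$ realises the distance, i.e.\ (i) holds. Fix any $x\in K$ and, by convexity, consider the segment $x_t := x^* + t(x - x^*) = (1-t)x^* + tx \in K$ for $t\in[0,1]$. The function $g(t) := \|x_0 - x_t\|_H^2 = \|x_0 - x^*\|_H^2 - 2t(x_0 - x^*, x - x^*)_H + t^2\|x-x^*\|_H^2$ is a quadratic in $t$ with a minimum at $t=0$ (by (i)), so its right derivative at $0$ is nonnegative: $g'(0^+) = -2(x_0 - x^*, x - x^*)_H \ge 0$, which is exactly~(ii). Conversely, assume~(ii). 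For arbitrary $x\in K$,
\[
\|x_0 - x\|_H^2 = \|(x_0 - x^*) + (x^* - x)\|_H^2 = \|x_0 - x^*\|_H^2 + 2(x_0 - x^*, x^* - x)_H + \|x^* - x\|_H^2 \ge \|x_0 - x^*\|_H^2,
\]
because the middle term is $\ge 0$ by~(ii) and the last term is $\ge 0$; hence (i) holds. This part is entirely elementary.

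\medskip

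\emph{Existence.} Set $d := \inf_{x\in K}\|x - x_0\|_H$ and pick a minimising sequence $(x_n)\subset K$ with $\|x_n - x_0\|_H \to d$. The key tool is the parallelogram identity applied to $x_n - x_0$ and $x_m - x_0$:
\[
\|x_n - x_m\|_H^2 = 2\|x_n - x_0\|_H^2 + 2\|x_m - x_0\|_H^2 - 4\Big\|\tfrac{x_n + x_m}{2} - x_0\Big\|_H^2.
\]
Since $\tfrac{1}{2}(x_n + x_m)\in K$ by convexity, the last term is $\ge d^2$, so $\|x_n - x_m\|_H^2 \le 2\|x_n - x_0\|_H^2 + 2\|x_m - x_0\|_H^2 - 4d^2 \to 0$ as $n,m\to\infty$. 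Thus $(x_n)$ is Cauchy; by completeness of $H$ it converges to some $x^*$, and $x^*\in K$ because $K$ is closed. Continuity of the norm gives $\|x^* - x_0\|_H = d$, so $x^*$ satisfies~(i). For uniqueness, if $x^*$ and $\tilde x^*$ both satisfy~(i), the same parallelogram computation with $x_n \equiv x^*$, $x_m \equiv \tilde x^*$ yields $\|x^* - \tilde x^*\|_H^2 \le 4d^2 - 4d^2 = 0$ (using again that their midpoint lies in $K$), hence $x^* = \tilde x^*$.

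\medskip

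\emph{Main obstacle.} There is no real obstacle here; the only point requiring care is that every step using convexity of $K$ (forming segments or midpoints) genuinely stays inside $K$, and that completeness of $H$ together with closedness of $K$ is invoked precisely once, to pass the Cauchy sequence to its limit in $K$. I would present the existence/uniqueness argument via the parallelogram law as above rather than by a compactness argument, since $H$ need not be finite-dimensional and $K$ need not be bounded.
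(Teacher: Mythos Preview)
Your proof is correct and complete; it is the standard textbook argument for the Hilbert projection theorem. The paper itself does not give a proof at all --- it simply cites a functional analysis textbook (Werner, \emph{Funktionalanalysis}, Satz~V.3.2 and Lemma~V.3.3) --- so your write-up in fact supplies strictly more than what the paper contains.
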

\begin{proof}
	See \cite[Satz V.3.2, p.219 and Lemma V. 3.3, p.220]{MR1787146}.
\end{proof}
The previous lemma allows to define the projection mapping $P_K:H\rightarrow K$ via 
$P_K(x_0):=x^*$. We also call $x^*=P_K(x_0)$ the projection of $x_0$ on $K$. 
Accordingly $x^*:=P_K(0)$ is the 
point in $K$ that is closest to the origin $0$ and $(ii)$ reads
$
(x^*,x^*)_H \le (x^*,x)_H$ for all  $x\in K. 	
$

\section{Maximum shape function subject to a  quasi-linear PDE}\label{sec:main_maximum_function}
This section is devoted to the derivation of the Eulerian 
semi-differentiability of a nonsmooth shape function subject to a quasi-linear
partial differential equation.  
\subsection{Problem formulation and setting}
Let us fix an open and bounded hold-all set $D\subset \R^2$.
In this paper we study the maximum shape function
\ben\label{eq:cost_max}
J_{\infty}(\Omega^\Gamma) :=  \max_{x\in \overbar \Omega} \Psi(x,u(\Omega^\Gamma,x)), 
\een
where $\Omega^\Gamma = (\Omega,\Gamma)$ belongs to
$
\Xi := \{ (\Omega,\Gamma):\;  \Omega\subset D, \Gamma\subset 
		\partial \Omega, \text{ and } \Omega\cup \Gamma \text{ is regular} \},
$
and $u(\cdot)=u(\Omega^\Gamma,\cdot)$ solves (in a weak sense) the 
following quasi-linear PDE with mixed boundary conditions 
\begin{align}
	- \divv( \beta(|\nabla u|^2)\nabla u ) + u & = f \quad \text{ in } \Omega, \\
	u & = 0 \quad \text{ on } \partial \Omega\setminus \Gamma\; (=: \Gamma_0),\\
		 \partial_\nu u & = 0 \quad \text{ on } \Gamma. 
\end{align}
As usual $\partial_\nu u:=\nabla u\cdot \nu$ is the normal derivative and $\nu$ denotes the outward 
pointing unit normal vector along $\partial \Omega$. 
The functions $\Psi$, $f$, and $\beta$ are specified below. 

Our first task is to prove the Eulerian semi-differentiability of 
$J_\infty(\cdot)$ at sets $\Omega^\Gamma$ belonging to the admissible set
$\Xi$. To emphasise the dependency of $u$ on $\Omega^\Gamma$ we write $u(\Omega^\Gamma,\cdot)$, 
however, we drop the index $\Omega^\Gamma$ whenever no confusion arises.  In what follows it is convenient to introduce the shape function 
\ben\label{eq:cost_j}
j(\Omega^{\Gamma,y}) := \Psi(y,u(\Omega^\Gamma,y)),	
 \een
 depending on the shape variable $\Omega^{\Gamma,y} := (\Omega,\Gamma,y)\in \Xi\times \overbar \Omega$.

To make sense of $J_\infty(\Omega^\Gamma)$ it suffices to have  $u\in W^1_{\Gamma,q}(\Omega)$
with $q>2$ since in that case Sobolev's embedding implies $u\in C_{\Gamma}(\Omega)$.
In order to obtain this higher integrability of $u$ we make the following assumptions. 
\begin{assumption}\label{assmp:trans}
We require the function $\beta:\R\rightarrow \R$ to satisfy the following conditions:
\begin{itemize}
 \item[1.] There exist constants $\bar{\beta},\underline{\beta}>0$ such that 
$\bar{\beta} \le \beta(x)\le \underline{\beta}$ for all $x\in \R.$
 \item[2.] For all $x,y\in \R$, we have $(\beta(x)-\beta(y))(x-y)\ge 0.$
 \item[3.] The function $\beta$ is continuously differentiable, that 
	 is, $\beta\in C^1(\R)$.
 \item[4.]  There are constants $k,K>0$, such that 
	 \ben k |\eta|^2 \le  \beta(|p|^2)|\eta|^2 + 2\beta' ( |p|^2 ) |p \cdot \eta|^2 \le  K |\eta|^2\quad \text{ for all } \; \eta,p \in \R^2. 
	 \een
 \end{itemize}
\end{assumption}

\begin{remark}
	Notice that using (1) and (2) of the previous assumption, we obtain 
	\ben
      	\underbrace{\beta(|p|^2)}_{\ge \bar\beta}|\eta|^2 + 
	2\underbrace{\beta' ( |p|^2 )}_{\ge 0} |p \cdot \eta|^2 \ge  \bar 
	\beta |\eta|^2 \quad \text{ for all }  \eta,p\in \R^2. \een
 So (1) and (2) imply the left inequality in 
 item 4.	
\end{remark}

\begin{assumption}
We assume that $f\in L_q(D)$ for some $q>2$.
\end{assumption}

\begin{assumption}
We assume that the functions $\Psi:\R^2\times\R \rightarrow \R$ satisfies, 
\begin{itemize}
	\item for all $x\in \R^2$, $\Psi(x,\cdot)\in C^1(\R)$ and $\partial_\zeta \Psi\in C(\R^3)$,
	\item for all $\zeta\in \R$, $\Psi(\cdot,\zeta)\in C^1(\R^2)$. 
\end{itemize}
\end{assumption}

\begin{example}
    A typical example of $\Psi$ is the function $\Psi(x,z):= |z  - u_d(x)|^2$, where
    $u_d:\R^2\rightarrow \R$ is some continuously differentiable function. 
    For this choice of cost function we present numerical results in Section~\ref{sec:numerics}. 
\end{example}

\begin{lemma}\label{lem:monoton_continuity}
Let $\beta:\R \rightarrow \R$ satisfy Assumption~\ref{assmp:trans}. 
Then for all $\theta,\eta\in \R^2$,
\begin{align}
\label{eq:monoton}
k|\eta-\theta|^2 &\le   (\beta(|\eta|^2) \eta - \beta(|\theta|^2) \theta)\cdot(\eta-\theta),\\
\label{eq:continuity}   K|\eta-\theta| & \ge |\beta(|\eta|^2) \eta - \beta(|\theta|^2) \theta|.
\end{align}
\begin{proof}
We obtain by the fundamental theorem of calculus,
\ben\label{eq:fundamental}
\begin{split}
	(\beta(|\eta|^2) \eta - \beta(|\theta|^2) \theta)\cdot(\eta-\theta)  = &  \int_0^1 2\beta'(|s\eta +(1-s)\theta|^2) 
	|(\eta-\theta)\cdot(s\eta +(1-s)\theta)|^2\; ds \\
	& + \int_0^1 \beta(|s\eta + (1-s)\theta|^2)|\eta-\theta|^2 \; ds 
	\quad\text{ for all } \theta,\eta\in \R^2.
\end{split}
\een
Hence \eqref{eq:monoton} follows from 
Assumption~\ref{assmp:trans}, item 4. The continuity 
\eqref{eq:continuity} follows in the same way. 
\end{proof}
\end{lemma}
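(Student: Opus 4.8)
\textbf{Proof proposal for Lemma~\ref{lem:monoton_continuity}.}

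The plan is to reduce both inequalities to a single one-dimensional integration along the segment joining $\theta$ and $\eta$. First I would introduce the auxiliary function $g:\R^2\rightarrow \R^2$, $g(p):=\beta(|p|^2)p$, which is exactly the flux appearing in the PDE, and observe that by Assumption~\ref{assmp:trans}, item~3, $g$ is $C^1$ with Jacobian
\ben\label{eq:Dg}
Dg(p)=\beta(|p|^2)I + 2\beta'(|p|^2)\,p\otimes p,
\een
where $(p\otimes p)\eta = (p\cdot\eta)p$. The quadratic form of this matrix is precisely $Dg(p)\eta\cdot\eta = \beta(|p|^2)|\eta|^2 + 2\beta'(|p|^2)|p\cdot\eta|^2$, so Assumption~\ref{assmp:trans}, item~4, says exactly that $k|\eta|^2\le Dg(p)\eta\cdot\eta\le K|\eta|^2$ uniformly in $p$; note $Dg(p)$ is symmetric, hence this also controls the operator norm, $\|Dg(p)\|\le K$.

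Next I would write, for fixed $\theta,\eta\in\R^2$, the path $\gamma(s):=s\eta+(1-s)\theta$ with $\gamma'(s)=\eta-\theta$, and apply the fundamental theorem of calculus to $s\mapsto g(\gamma(s))$:
\ben
g(\eta)-g(\theta) = \int_0^1 Dg(\gamma(s))(\eta-\theta)\;ds.
\een
Dotting with $\eta-\theta$ gives the identity already displayed in \eqref{eq:fundamental} (after expanding $Dg$ via \eqref{eq:Dg}), and then the lower bound in item~4 applied pointwise in $s$ with $p=\gamma(s)$ and the fixed vector $\eta-\theta$ yields
\ben
(g(\eta)-g(\theta))\cdot(\eta-\theta) \ge \int_0^1 k|\eta-\theta|^2\;ds = k|\eta-\theta|^2,
\een
which is \eqref{eq:monoton}. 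For \eqref{eq:continuity} I would instead estimate directly from the integral representation of $g(\eta)-g(\theta)$: by the triangle inequality for integrals and the operator-norm bound $\|Dg(\gamma(s))\|\le K$,
\ben
|g(\eta)-g(\theta)| \le \int_0^1 \|Dg(\gamma(s))\|\,|\eta-\theta|\;ds \le K|\eta-\theta|.
\een

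I do not expect a genuine obstacle here; the only point requiring a moment's care is that the bound in item~4 must be used with the \emph{same} test vector $\eta-\theta$ at \emph{every} point $p=\gamma(s)$ of the segment, which is legitimate precisely because item~4 is assumed to hold for all $\eta,p\in\R^2$. A secondary small point is justifying differentiation under/application of the fundamental theorem of calculus, which is immediate since $\beta\in C^1$ makes $g\in C^1$ and $s\mapsto g(\gamma(s))$ continuously differentiable on $[0,1]$. One could alternatively phrase the whole argument without naming $g$, exactly as the paper's \eqref{eq:fundamental} does, by differentiating $s\mapsto \beta(|\gamma(s)|^2)\gamma(s)\cdot(\eta-\theta)$ and $s\mapsto \beta(|\gamma(s)|^2)\gamma(s)$ respectively; I would keep the $Dg$ formulation since it makes the role of item~4 as a uniform ellipticity/Lipschitz condition on the flux transparent and will be reusable later for the PDE analysis.
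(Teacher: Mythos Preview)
Your proposal is correct and follows essentially the same approach as the paper: both arguments apply the fundamental theorem of calculus along the segment $s\mapsto s\eta+(1-s)\theta$ to obtain the integral identity \eqref{eq:fundamental} and then invoke item~4 of Assumption~\ref{assmp:trans} pointwise in $s$. Your explicit introduction of the flux $g$ and its Jacobian $Dg$, together with the observation that symmetry of $Dg(p)$ turns the upper quadratic-form bound into an operator-norm bound, is a clean way to handle \eqref{eq:continuity} and matches what the paper compresses into ``follows in the same way.''
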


Note that, in general, $u\not\in H^2(\Omega)$ due to the mixed boundary 
conditions. However, we have the following result.   
\begin{lemma}\label{lem:existence}
	Let Assumption~\ref{assmp:trans} be satisfied and assume $\Omega^\Gamma\in 
	\Xi$. For every small enough $q>2$, there is a unique $u\in W^1_{\Gamma,q}(\Omega)$ satisfying
\ben\label{eq:state}
\int_\Omega \beta(|\nabla u|^2)\nabla u \cdot \nabla \varphi + u \varphi \, dx  = \int_\Omega f \varphi \, dx \quad \text{ for all }\varphi\in W^1_{\Gamma,q'}(\Omega)
\een
or equivalently
\ben
\int_\Omega a(x,Lu(x))\cdot L\varphi(x) \; dx = \int_\Omega f \varphi \, dx \quad \text{ for all }\varphi\in W^1_{\Gamma,q'}(\Omega),
\een
where
\ben\label{eq:b_operator} 
a(x,\zeta) := \begin{pmatrix}
       	\zeta_0 \\
      	\beta(|\hat \zeta|^2) \hat \zeta
	\end{pmatrix}, \quad \zeta = \begin{pmatrix}
	\zeta_0\\
	\hat \zeta\end{pmatrix}\in \R^3, \quad Lu:=\begin{pmatrix} 
	 u \\
	 \nabla u\end{pmatrix} .	
\een 
\end{lemma}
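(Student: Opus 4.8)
The plan is to recast the weak formulation \eqref{eq:state} as the operator equation $\mathcal A_q u = F$, where $F\in W^{-1}_{\Gamma,q}(\Omega)$ is the functional $\varphi\mapsto \int_\Omega f\varphi\,dx$, and $\mathcal A_q$ is the operator from \eqref{def:a} associated with the function $b(x,\zeta):=a(x,\zeta)$ defined in \eqref{eq:b_operator}. Since $f\in L_q(D)$ with $q>2$, H\"older's inequality and the Sobolev embedding $W^1_{\Gamma,q'}(\Omega)\hookrightarrow L_{q}(\Omega)$ (valid for $q'<2$, hence for all $q$ in a right-neighbourhood of $2$) show that $F$ is a bounded linear functional on $W^1_{\Gamma,q'}(\Omega)$, so $F\in W^{-1}_{\Gamma,q}(\Omega)$. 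Thus it suffices to invoke Theorem~\ref{thm:groeger}, which gives that $\mathcal A_q:W^1_{\Gamma,q}(\Omega)\to W^{-1}_{\Gamma,q}(\Omega)$ is an isomorphism for all $q\in[2,q_0]$ with $M_qk<1$; existence and uniqueness of $u=\mathcal A_q^{-1}F$ then follow immediately.

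The work is therefore to verify that $b:=a$ from \eqref{eq:b_operator} satisfies Assumption~\ref{ass:b} for some $q_0>2$, so that Theorem~\ref{thm:groeger} applies. First I would check $b(\cdot,0)\in L_q(\Omega)$: since $a(x,0)=0$ this is trivial, and measurability in $x$ is clear because $a$ does not depend on $x$ at all (it is continuous in $\zeta$, hence Carath\'eodory). The two substantive conditions are the strong monotonicity $(b(x,\eta)-b(x,\theta))\cdot(\eta-\theta)\ge m|\eta-\theta|^2$ and the Lipschitz bound $|b(x,\eta)-b(x,\theta)|\le M|\eta-\theta|$ for $\eta,\theta\in\R^3$. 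Writing $\eta=(\eta_0,\hat\eta)$, $\theta=(\theta_0,\hat\theta)$, the first component contributes exactly $(\eta_0-\theta_0)^2$, while the gradient component is handled by Lemma~\ref{lem:monoton_continuity}: \eqref{eq:monoton} gives $(\beta(|\hat\eta|^2)\hat\eta-\beta(|\hat\theta|^2)\hat\theta)\cdot(\hat\eta-\hat\theta)\ge k|\hat\eta-\hat\theta|^2$ and \eqref{eq:continuity} gives the matching Lipschitz estimate with constant $K$. Combining the two components yields monotonicity with $m=\min\{1,k\}$ and the Lipschitz bound with $M=\sqrt{\max\{1,K^2\}}=\max\{1,K\}$ (using $|b(x,\eta)-b(x,\theta)|^2=(\eta_0-\theta_0)^2+|\beta(|\hat\eta|^2)\hat\eta-\beta(|\hat\theta|^2)\hat\theta|^2\le (\eta_0-\theta_0)^2+K^2|\hat\eta-\hat\theta|^2$).

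With Assumption~\ref{ass:b} verified for every $q_0\ge 2$ (the constants $m,M$ do not depend on $q_0$), the second bullet of the Remark after Lemma~\ref{lem:Rq} supplies some $q_0>2$ with $\Omega^\Gamma\in R_{q_0}$. Then, since $q\mapsto M_q$ is continuous with $M_2=1$ and $k<1$, the condition $M_qk<1$ holds for all $q$ in a right-neighbourhood of $2$ (equivalently \eqref{eq:1q} holds); taking such a $q\le q_0$ makes $\mathcal A_q$ an isomorphism, and $u:=\mathcal A_q^{-1}F\in W^1_{\Gamma,q}(\Omega)$ is the unique weak solution. The equivalence of the two displayed variational identities is just the definition of $a$ and $L$: $a(x,Lu)\cdot L\varphi = u\varphi + \beta(|\nabla u|^2)\nabla u\cdot\nabla\varphi$.

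The main obstacle is not really an obstacle but a bookkeeping point: one must be careful that the ``small enough $q>2$'' is chosen so that simultaneously (a) $\Omega^\Gamma\in R_q$ (guaranteed by Lemma~\ref{lem:Rq} once it holds for one $q_0>2$), (b) $M_qk<1$ so Theorem~\ref{thm:groeger} gives invertibility, and (c) $q'<2$ so the embedding $W^1_{\Gamma,q'}(\Omega)\hookrightarrow L_q(\Omega)$ used to place the right-hand side in $W^{-1}_{\Gamma,q}(\Omega)$ is available — all three are open conditions satisfied on a right-neighbourhood of $2$, so their intersection is nonempty. Modulo this, the proof is a direct application of Theorem~\ref{thm:groeger} and Lemma~\ref{lem:monoton_continuity}.
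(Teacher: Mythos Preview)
Your proposal is correct and follows essentially the same route as the paper: set $b:=a$, verify \eqref{ass:b} via Lemma~\ref{lem:monoton_continuity} (yielding $m=\min\{1,k\}$, $M=\max\{1,K\}$), and invoke Theorem~\ref{thm:groeger} together with $M_qk<1$ for $q$ close to $2$. The only superfluous step is the Sobolev embedding $W^1_{\Gamma,q'}(\Omega)\hookrightarrow L_q(\Omega)$ for the right-hand side --- the trivial inclusion $W^1_{\Gamma,q'}(\Omega)\subset L_{q'}(\Omega)$ already pairs with $f\in L_q$ via H\"older.
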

\begin{proof}
We apply Theorem~\ref{thm:groeger} to 
$ b(x,\zeta) := a(x,\zeta)$ with $a$ defined in \eqref{eq:b_operator}. 
We need to check the conditions 
stated in \eqref{ass:b}. It is clear that $b(\cdot,0)\in L_\infty(\Omega)$. 
Since Assumption~\ref{assmp:trans} is satisfied,  
Lemma~\ref{lem:monoton_continuity} yields \eqref{eq:monoton} and 
\eqref{eq:continuity} and hence this implies the continuity and monotonicity 
properties for $b(x,\cdot)$ stated in \eqref{ass:b}. Setting $l=(1-(m/M)^2)^{1/2}$ 
and $m := \min\{k,1\}$, $M := \max\{K,1\}$ we see that the condition
$M_ql <1$ is satisfied provided $q>2$ is small enough (cf. \eqref{eq:1q}).
 So the result follows
from Theorem~\ref{thm:groeger}.
\end{proof}

\subsection{Analysis of the perturbed state equation}
Let $\Omega^\Gamma\subset \Xi$ be fixed and pick a vector field 
$X\in \ac C^1(D,\R^2)$ with associated $X$-flow $\Phi_t$. We set 
$\Omega_t := \Phi_t(\Omega)$, $t\ge 0$, and 
consider \eqref{eq:state} on the perturbed domain $\Omega_t$	 and perform a 
change of variables to obtain,
\ben\label{eq:perturbed_ut}
\int_\Omega \beta(|B(t)\nabla u^t|^2)A(t)\nabla u^t \cdot \nabla \varphi + \xi(t) u^t \varphi \, dx = \int_\Omega f^t 
\varphi \, dx \quad \text{ for all } \varphi \in W^1_{\Gamma,q'}(\Omega),
\een
where $q\ge 2$ with its conjugate $ q' = q/(q-1)$, and 
\ben
A(t):= \det(\partial \Phi_t) \partial \Phi_t^{-1}\partial \Phi_t^{-\top}, \quad B(t) := \partial \Phi_t^{-\top}, \quad f^t := \det(\partial \Phi_t) f\circ \Phi_t, \quad \xi(t):= \det(\partial \Phi_t). 	
\een
The existence and uniqueness of a solution of \eqref{eq:perturbed_ut} is 
addressed below. It is convenient to rewrite \eqref{eq:perturbed_ut} as
\ben
\int_\Omega a^t(x,Lu^t(x))\cdot L\varphi(x)\; dx = \int_\Omega f \varphi \; dx	\quad \text{ for all } \varphi \in W^1_{\Gamma,q}(\Omega)
\een
with the definition 
\ben
a^t(x,\zeta) := \begin{pmatrix} \xi(t,x)\zeta_0 \\ \beta(|B(t,x)\hat \zeta|^2) 
	A(t,x)\hat \zeta  \end{pmatrix},\quad  \zeta = \begin{pmatrix} \zeta_0 \\ \hat\zeta \end{pmatrix} \in \R^3.
\een
We associate with $a^t$ the operator
\ben\label{eq:operators_At}
\mathcal A^t_q:W^1_{\Gamma,q}(\Omega) \rightarrow W^{-1}_{\Gamma,q}(\Omega),\; 	
\langle \mathcal A^t_qv,w\rangle := \int_\Omega a^t(x, Lv(x)) \cdot Lw(x) \, 
dx, 
\een
where $q>2$. We show next that for all sufficiently small
$q>2$ and $t>0$ the operators $\mathcal A^t_q$ 
are isomorphisms from $W^1_{\Gamma,q}(\Omega)$ onto $W^{-1}_{\Gamma,q}(\Omega)$. 
The main task is to show that $q$ is independent of $t$ provided 
it is small 
enough. We begin with the following lemma. 

\begin{lemma}\label{lem:postive_At}
For every $\eps> 0$, there exists $\delta>0$, so that,
\begin{align}
\label{eq:A_positive} A(t,x)\eta \cdot \eta  & \ge (1-\eps) |\eta|^2 && \text{ for all } 
\eta\in \R^d, \text{ for all } (t,x)\in [0,\delta]\times \overbar D,\\
\label{eq:A_bound}
\|A(t)\|_{C(\overbar D,\R^{d,d})}   & \le 1+\eps && \text{ for all } t\in [0,\delta],\\
\label{eq:Bt} 1-\eps \le |B(t,x)\eta | & \le 1+\eps && \text{ for all }\eta\in \R^d   \text{ for all } (t,x)\in [0,\delta]\times \overbar D,\\
\label{eq:xit} 1-\eps \le \xi(t,x) & \le  1+\eps && \text{ for all } (t,x)\in 
[0,\delta]\times \overbar D. 
\end{align}
\end{lemma}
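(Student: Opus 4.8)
The plan is to prove all four estimates by exploiting the uniform convergence results of Lemma~\ref{lemma:phit}(i), which give $\partial\Phi_t\to I$, $\partial\Phi_t^{-1}\to I$, and $\det(\partial\Phi_t)\to 1$ strongly in $C(\overbar D)$ as $t\searrow 0$. Since each of the matrix fields $A(t)$, $B(t)$ and the scalar $\xi(t)$ is obtained from $\partial\Phi_t$, $\partial\Phi_t^{-1}$ and $\det(\partial\Phi_t)$ by continuous algebraic operations (products, transposition, multiplication), they all converge uniformly on $\overbar D$ to their values at $t=0$, namely $A(0)=I$, $B(0)=I$, $\xi(0)=1$. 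This is the conceptual heart of the lemma; the rest is bookkeeping to convert uniform closeness to the identity into the four specific one-sided bounds.

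Concretely, first I would fix $\eps>0$. For \eqref{eq:xit}, since $\xi(t)=\det(\partial\Phi_t)\to 1$ in $C(\overbar D)$, there is $\delta_1>0$ with $\|\xi(t)-1\|_{C(\overbar D)}\le\eps$ for $t\in[0,\delta_1]$, which is exactly $1-\eps\le\xi(t,x)\le1+\eps$. For \eqref{eq:Bt}, write $B(t)=\partial\Phi_t^{-\top}$; by Lemma~\ref{lemma:phit}(i) we have $\|B(t)-I\|_{C(\overbar D,\R^{d,d})}\le\eps/2$ for $t\le\delta_2$, so for any unit-free $\eta$ the reverse and forward triangle inequalities give $|B(t,x)\eta|\ge|\eta|-|(B(t,x)-I)\eta|\ge(1-\eps/2)|\eta|$ and $|B(t,x)\eta|\le(1+\eps/2)|\eta|$; since the claimed estimate \eqref{eq:Bt} as written is stated for $\eta\in\R^d$ (really meaning $|\eta|=1$, or after normalising) this yields the bound with $\eps$ in place of $\eps/2$, hence on $[0,\delta_2]$. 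For \eqref{eq:A_bound}, $A(t)=\det(\partial\Phi_t)\,\partial\Phi_t^{-1}\partial\Phi_t^{-\top}$ is a product of three factors each converging uniformly to $I$ (resp.\ to $1$), so $\|A(t)\|_{C(\overbar D,\R^{d,d})}\to\|I\|=1$, giving $\delta_3>0$ with $\|A(t)\|_{C(\overbar D,\R^{d,d})}\le1+\eps$ for $t\le\delta_3$.

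The one estimate needing a touch more care is the coercivity bound \eqref{eq:A_positive}. Here I would note $A(t)=\det(\partial\Phi_t)\,C(t)^\top C(t)$ with $C(t):=\partial\Phi_t^{-\top}$, so $A(t)\eta\cdot\eta=\det(\partial\Phi_t)\,|C(t)\eta|^2=\xi(t,x)\,|B(t,x)\eta|^2$. Combining the already-established lower bounds \eqref{eq:Bt} and \eqref{eq:xit} (valid on $[0,\min\{\delta_1,\delta_2\}]$ for a parameter $\eps'$ to be chosen) gives $A(t,x)\eta\cdot\eta\ge(1-\eps')^3|\eta|^2$, and choosing $\eps'$ so that $(1-\eps')^3\ge1-\eps$ produces \eqref{eq:A_positive} on the corresponding time interval. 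Finally I would set $\delta:=\min\{\delta_1,\delta_2,\delta_3\}$ (with the $\delta_i$ chosen for the appropriately adjusted tolerances) so that all four inequalities hold simultaneously on $[0,\delta]$.

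The main obstacle, such as it is, is purely organisational: one must be careful that the four estimates are asked to hold for a \emph{single} $\delta$, so the adjusted tolerances feeding into \eqref{eq:Bt} and \eqref{eq:xit} when they are reused inside \eqref{eq:A_positive} have to be fixed first, and $\delta$ taken as the minimum over all the resulting thresholds. There is no analytic difficulty beyond invoking Lemma~\ref{lemma:phit}(i); the algebraic identity $A(t)\eta\cdot\eta=\xi(t)|B(t)\eta|^2$ is what makes the coercivity estimate fall out of the other two rather than requiring a separate argument.
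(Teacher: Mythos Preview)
Your proposal is correct and follows essentially the same approach as the paper: both rely on the uniform convergence $\partial\Phi_t\to I$, $\partial\Phi_t^{-1}\to I$, $\det(\partial\Phi_t)\to 1$ in $C(\overbar D)$ from Lemma~\ref{lemma:phit}(i) together with triangle-inequality arguments, the paper merely treating \eqref{eq:Bt} and declaring the rest analogous. Your use of the algebraic identity $A(t,x)\eta\cdot\eta=\xi(t,x)\,|B(t,x)\eta|^2$ to deduce \eqref{eq:A_positive} from \eqref{eq:Bt} and \eqref{eq:xit} is a tidy shortcut that the paper does not spell out, and your remark that \eqref{eq:Bt} is to be read for unit vectors (or with an implicit $|\eta|$ on each side) is well taken.
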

\begin{proof}
	We only prove \eqref{eq:Bt} as the other estimates can be shown in much the same way.
	Since $B:[0,\tau]\rightarrow C(\overbar D,\R^{d,d})
	$ is continuous and $B(0)=I$, we find for every $\eps>0$ a number $\delta>0$ so that
$
       \|B(t)-I\|_{C(\overbar D,\R^{d,d})} \le \eps
	$
    for all $|t|\le \delta$.
	Hence the left inequality in \eqref{eq:Bt} follows by the reverse 
	triangle inequality. As for the right inequality in  \eqref{eq:Bt}
	note that for all $\eta\in \R^d$ and all $(t,x)\in [0,\delta]\times \overbar D$, 
	\ben
	\begin{split}
		|\eta|^2 & = |(I-B(t,x))\cdot \eta| + |B(t,x)\eta| \\
	          & \le \underbrace{\|B(t)-I\|_{C(\overbar D,\R^{d,d})}}_{\le \eps}|\eta|^2 + |B(t,x)\eta|
		   \le \eps|\eta|^2 +  |B(t,x)\eta|  
        \end{split}
	\een
	which is equivalent to \eqref{eq:Bt}.
\end{proof}
\begin{lemma}\label{lem:invertibility}
	For each $\Omega^\Gamma\in \Xi$, there exist $q_0>2$ and $\delta>0$, so that for all $t\in [0,\delta]$ and all 
	 $q\in [2,q_0]$ the mapping 
	 $\mathcal A^t_q:W^1_{\Gamma,q}(\Omega) \rightarrow 
	 W^{-1}_{\Gamma,q}(\Omega)
	 $  is an isomorphism. Moreover, there is a constant $c>0$ independent of $t$, so that
	 \ben
	 \|(\mathcal A^t_q)^{-1}f - (\mathcal A^t_q)^{-1}g\|_{W^1_q(\Omega)} \le c\|f-g\|_{W^{-1}_{\Gamma,q}(\Omega)} \quad \text{ for all } f,g\in W^{-1}_{\Gamma,q}(\Omega)
	 \een
	 for all $t\in [0,\delta]$. 
 \end{lemma}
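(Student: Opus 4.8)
\noindent\textit{Proof (plan).} The plan is to apply Theorem~\ref{thm:groeger} to the nonlinearity $b(x,\zeta):=a^t(x,\zeta)$ from \eqref{eq:operators_At}; the only genuine difficulty is that the admissible exponent, the whole range $[2,q_0]$, and the Lipschitz constant of the inverse have to be chosen \emph{independently of $t$}. First I would fix, once and for all, an exponent $\bar q>2$ with $\Omega^\Gamma\in R_{\bar q}$ — possible by the second bullet of the Remark following Lemma~\ref{lem:Rq} — and then fix $\eps\in(0,1)$ together with the associated $\delta>0$ provided by Lemma~\ref{lem:postive_At}, so that \eqref{eq:A_positive}--\eqref{eq:xit} hold on $[0,\delta]\times\overbar D$.

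Next I check that $b=a^t$ satisfies \eqref{ass:b} with $\bar q$, with structural constants that stay, uniformly in $t\in[0,\delta]$, close to those of the unperturbed operator. From the definitions of $A(t)$, $B(t)$, $\xi(t)$ one has the identity $A(t)=\xi(t)\,B(t)^\top B(t)$, hence for $\zeta=(\zeta_0,\hat\zeta)$, $\eta=(\eta_0,\hat\eta)\in\R^3$ and $x\in\overbar\Omega$,
\ben\label{eq:at_split}
\begin{split}
(a^t(x,\zeta)-a^t(x,\eta))\cdot(\zeta-\eta)={}&\xi(t,x)(\zeta_0-\eta_0)^2\\
&+\xi(t,x)\,\big(\beta(|B(t,x)\hat\zeta|^2)B(t,x)\hat\zeta-\beta(|B(t,x)\hat\eta|^2)B(t,x)\hat\eta\big)\cdot B(t,x)(\hat\zeta-\hat\eta).
\end{split}
\een
Applying \eqref{eq:monoton} to the vectors $B(t,x)\hat\zeta$, $B(t,x)\hat\eta$ and using \eqref{eq:Bt}, \eqref{eq:xit} turns \eqref{eq:at_split} into $(a^t(x,\zeta)-a^t(x,\eta))\cdot(\zeta-\eta)\ge m_t|\zeta-\eta|^2$ with $m_t:=(1-\eps)\min\{1,(1-\eps)^2k\}$; likewise, writing $\beta(|B\hat\zeta|^2)A\hat\zeta=\xi\,B^\top\!\big(\beta(|B\hat\zeta|^2)B\hat\zeta\big)$ and invoking \eqref{eq:continuity} with $\|B(t)\|\le 1+\eps$ and \eqref{eq:xit} gives $|a^t(x,\zeta)-a^t(x,\eta)|\le M_t|\zeta-\eta|$ with $M_t:=(1+\eps)\max\{1,(1+\eps)^2K\}$. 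Since $a^t(\cdot,0)=0\in L_{\bar q}(\Omega)$ and $\xi(t,\cdot),A(t,\cdot),B(t,\cdot)\in C(\overbar D)$, the remaining requirements in \eqref{ass:b} hold as well. Note $0<m_t\le 1\le M_t$ for all $t\in[0,\delta]$, while $m_t\to\min\{1,k\}$ and $M_t\to\max\{1,K\}$ as $\eps\to 0$.

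Now I choose the common exponent range. Put $k_t:=(1-m_t^2/M_t^2)^{1/2}$. Shrinking $\eps$ (hence $\delta$) if necessary, the limits above yield a number $\bar k<1$ with $k_t\le\bar k$ for all $t\in[0,\delta]$. By Lemma~\ref{lem:Rq}, $M_q\le M_{\bar q}^{\theta(q)}$ with $\theta(q)=\tfrac{\bar q}{q}\tfrac{q-2}{\bar q-2}\to 0$ as $q\to2^+$, so $\limsup_{q\to2^+}M_q\le M_2=1$; hence there is $q_0\in(2,\bar q]$ such that $\widetilde M:=\sup_{q\in[2,q_0]}M_q\;(\le\max\{1,M_{\bar q}\})$ satisfies $\widetilde M\bar k<1$. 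For every $q\in[2,q_0]$ and $t\in[0,\delta]$ we then have $M_qk_t\le\widetilde M\bar k<1$, so Theorem~\ref{thm:groeger} (with $\bar q$ in the role of $q_0$ there, and $b=a^t$) applies and yields that $\mathcal A^t_q:W^1_{\Gamma,q}(\Omega)\to W^{-1}_{\Gamma,q}(\Omega)$ is an isomorphism with
\ben\label{eq:at_lip}
\|(\mathcal A^t_q)^{-1}f-(\mathcal A^t_q)^{-1}g\|_{W^1_q(\Omega)}\le c^t_q\,\|f-g\|_{W^{-1}_{\Gamma,q}(\Omega)},\qquad c^t_q:=m_tM_t^{-2}M_q(1-M_qk_t)^{-1}.
\een
Using $m_t\le1$, $M_t\ge1$, $M_q\le\widetilde M$ and $1-M_qk_t\ge1-\widetilde M\bar k>0$ in \eqref{eq:at_lip} gives $c^t_q\le c:=\widetilde M(1-\widetilde M\bar k)^{-1}$, independent of $t\in[0,\delta]$ and of $q\in[2,q_0]$, which proves the lemma.

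The main obstacle — and exactly where Lemma~\ref{lem:postive_At} is indispensable — is the uniformity in $t$: one must keep the monotonicity and Lipschitz constants $m_t,M_t$ of the frozen-coefficient operator $a^t$ in a fixed compact subinterval of $(0,\infty)$, bounded away from the degenerate configuration $k_t=1$, as $t$ ranges over $[0,\delta]$. Once this is secured, the continuity of $q\mapsto M_q$ at $q=2$ (Lemma~\ref{lem:Rq}) supplies a single exponent range $[2,q_0]$ valid for all such $t$, and the Gr\"oger estimate \eqref{eq:A_q} then delivers the uniform constant $c$.
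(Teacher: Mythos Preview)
Your proof is correct and follows the same strategy as the paper: verify the structural hypotheses \eqref{ass:b} for $a^t$ with constants uniform in $t\in[0,\delta]$ via Lemma~\ref{lem:postive_At}, then invoke Theorem~\ref{thm:groeger} and track the constant $c_q$. The only cosmetic difference is that you exploit the algebraic identity $A(t)=\xi(t)B(t)^\top B(t)$ to reduce the perturbed monotonicity/Lipschitz bounds directly to the unperturbed estimates \eqref{eq:monoton}--\eqref{eq:continuity}, whereas the paper redoes the fundamental-theorem-of-calculus computation \eqref{eq:fundamental} with $B(t)$ inserted and then appeals to Assumption~\ref{assmp:trans}, item~4; your route is slightly cleaner and also makes the uniform bound on $c^t_q$ more explicit than the paper does.
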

 \begin{proof}
	 According to Theorem~\ref{thm:groeger} and Lemma~\ref{lem:existence}
	 there is $q_0>2$ so that  
	 $\mathcal A(\cdot) = - \divv(\beta(|\nabla \cdot|^2)\nabla \cdot )$ is an ismorphism from $W^1_{\Gamma,q}(\Omega)$ onto 
	 $W^{-1}_{\Gamma,q}(\Omega)$ for all $q\in (2,q_0]$. Indeed setting
	 $m := \min\{k,1\}$ and $M:=\max\{K,1\}$ we get $M_q (1-m^2/M^2)^{1/2} <1$ provided
	 $q$ is close enough to $2$ (cf. \eqref{eq:1q}).  
 Similarly to 
\eqref{eq:fundamental}, we can write 
\ben
\begin{split}
	(\beta(|B(t) & \eta|^2) A(t)\eta - \beta(|B(t)\theta|^2) A(t)\theta)\cdot(\eta-\theta) \\
	&	=   \int_0^1 \xi(t) 2\beta'(|B(t)(s\eta +(1-s)\theta)|^2) 
       |B(t)(\eta-\theta)\cdot B(t)(s\eta +(1-s)\theta)|^2\; ds \\
       & + \int_0^1 \xi(t)\beta(|B(t)(s\eta + (1-s)\theta)|^2)|B(t)(\eta-\theta)|^2 \; ds 
       \quad\text{ for all } \theta,\eta\in \R^2, \text{ for all }t.
\end{split}
\een
So using Assumption~\ref{assmp:trans} and Lemma~\ref{lem:postive_At}, we get
\ben
(\beta(|B(t)\eta|^2) A(t)\eta - \beta(|B(t)\theta|^2) A(t)\theta)\cdot(\eta-\theta) \ge (m-\eps) |B(t)(\eta-\theta)|^2 \ge (k-\eps)|\eta-\theta|^2 	
\een
for all $\theta,\eta\in \R^2$ and all sufficiently small $t$. In a similar manner we can show
\ben
|\beta(|B(t)\eta|^2) A(t)\eta - \beta(|B(t)\theta|^2) A(t)\theta)| \le (M +\eps)|\eta-\theta|	
\een
for all $\theta,\eta\in \R^2$ and all sufficiently small $t$.  
This implies that we find for $\eps >0$ a number $\delta >0$ so that
\begin{align} 
	(a^t(x,\eta)-a^t(x,\theta)) &\cdot (\eta-\theta) \ge (m-\eps)|\eta-\theta|^2, \\
	   |a^t(x,\eta)-a^t(x,\theta)|& \le (M+\eps)|\eta -\theta|
 \end{align}
for all $t\in [0,\delta]$ and for all $\theta,\eta\in \R^{3}$. 
Noting that $a^t(\cdot,0)\in L_\infty(D)$ we can apply again  Theorem~\ref{thm:groeger}
and obtain that $\mathcal A^t_q $ is in fact an isomorphism when we choose 
$\eps$ so small that $M_q(1-(m-\eps)^2/(M+\eps)^2)^{1/2} <1 $ which is 
possible since 
$\lim_{\eps\searrow 0} (m-\eps)/(M+\eps)= m /M$ 
and $M_q  (1 - m^2 /M^2)^{1/2}<1$. 
\end{proof}
\begin{definition}
	For $\Omega^\Gamma\in \Xi$ we define  $q_0>2$ to be a number as in Lemma~\ref{lem:invertibility}.
\end{definition}
\begin{corollary}\label{cor:sensi_u_ut}
	Suppose that $X\in \ac C^1(D,\R^2), \Omega^\Gamma\in \Xi$ and $q\in 
	(2,q_0]$.  
	\begin{itemize}
		\item[(a)] If $f\in 
	L_q(\Omega)$, then the family 
	of solutions $\{u^t\}$ of \eqref{eq:perturbed_ut} satisfies
	\ben
	\lim_{t\searrow 0} \|u^t-u\|_{W^1_q(\Omega)}=0 \quad \text{ and } \quad \lim_{t\searrow 0} \|u^t-u\|_{C(\overbar \Omega)}=0. 
	\een	
\item[(b)]If $f\in W^1_q(\Omega)$, then there is $\tau> 
	0$ and $c>0$, so that $\{u^t\}$ satisfies
	\ben
       	\|u^t-u\|_{C(\overbar{\Omega})} +  \|u^t-u\|_{W^1_q(\Omega)} \le c t 
	\quad 	\forall t\in [0,\tau].	 
	\een
\end{itemize}	
\end{corollary}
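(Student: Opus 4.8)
The plan is to turn both statements into a single perturbation estimate for the (uniformly invertible) linearised operators. After the change of variables, $u$ and $u^t$ are the unique solutions in $W^1_{\Gamma,q}(\Omega)$ of $\mathcal A^0_q u=F^0$ and $\mathcal A^t_q u^t=F^t$, where $F^t\in W^{-1}_{\Gamma,q}(\Omega)$ is the functional $\varphi\mapsto\int_\Omega f^t\varphi\,dx$, $\mathcal A^0_q=\mathcal A_q$ and $f^0=f$; their existence for $q\in(2,q_0]$, $t\in[0,\delta]$ and the uniform Lipschitz bound for the inverses $(\mathcal A^t_q)^{-1}$ (with a constant $c$ independent of $t$) are provided by Lemma~\ref{lem:invertibility}. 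Since $\mathcal A^t_q u^t-\mathcal A^t_q u=(F^t-F^0)+(\mathcal A^0_q u-\mathcal A^t_q u)$, using $F^0=\mathcal A^0_q u$, this uniform Lipschitz bound immediately gives the master estimate
\begin{equation*}
\|u^t-u\|_{W^1_q(\Omega)}\le c\,\big(\|F^t-F^0\|_{W^{-1}_{\Gamma,q}(\Omega)}+\|\mathcal A^t_q u-\mathcal A^0_q u\|_{W^{-1}_{\Gamma,q}(\Omega)}\big).
\end{equation*}
The $C(\overbar\Omega)$-estimates then come for free from the Sobolev embedding $W^1_q(\Omega)\hookrightarrow C(\overbar\Omega)$ (valid since $q>2$), so everything reduces to controlling the two terms on the right: they must tend to $0$ for (a) and be $\le c\,t$ for (b).

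For the first term I would use $\|F^t-F^0\|_{W^{-1}_{\Gamma,q}(\Omega)}\le\|f^t-f\|_{L_q(\Omega)}$ (Hölder together with $\|\varphi\|_{L_{q'}(\Omega)}\le\|\varphi\|_{W^1_{q'}(\Omega)}$), split $f^t-f=(\det\partial\Phi_t-1)(f\circ\Phi_t)+(f\circ\Phi_t-f)$, and invoke Lemma~\ref{lemma:phit}: the first piece is governed by $\|\det\partial\Phi_t-1\|_{C(\overbar D)}$ and the second by $\|f\circ\Phi_t-f\|_{L_q(\Omega)}$, both of which tend to $0$ for $f\in L_q$ and are $O(t)$ for $f\in W^1_q$. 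For the second term, the same duality gives $\|\mathcal A^t_q u-\mathcal A^0_q u\|_{W^{-1}_{\Gamma,q}(\Omega)}\le\|a^t(\cdot,Lu)-a^0(\cdot,Lu)\|_{L_q(\Omega,\R^3)}$. Part (a) then follows by dominated convergence: $a^t(x,Lu(x))\to a^0(x,Lu(x))$ for a.e.\ $x$ because $A(t),B(t)\to I$, $\xi(t)\to1$ uniformly on $\overbar D$ (Lemma~\ref{lemma:phit}(i)) and $\beta$ is continuous, while $|a^t(x,Lu(x))-a^0(x,Lu(x))|\le C(|u(x)|+|\nabla u(x)|)\in L_q(\Omega)$ uniformly in $t\in[0,\delta]$ by the boundedness of $\beta$, $\xi(t)$ and $A(t)$.

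The only genuine obstacle is the quantitative version of the last bound needed for (b): that $|a^t(x,Lu(x))-a^0(x,Lu(x))|\le c\,t\,(|u(x)|+|\nabla u(x)|)$ for a.e.\ $x$ and all small $t$. The difficulty is that $u$ is merely $W^1_q$-regular (no $H^2$, as stressed in the introduction), so one cannot afford a quadratic factor $|\nabla u|^2$ arising from the nonlinearity $p\mapsto\beta(|p|^2)p$. The first component of $a^t-a^0$ is harmless, $|(\xi(t,x)-1)u(x)|\le c\,t\,|u(x)|$ by Lemma~\ref{lemma:phit}(i). For the second component, with $p=\nabla u(x)$, I would split
\begin{equation*}
\beta(|B(t)p|^2)A(t)p-\beta(|p|^2)p=\beta(|B(t)p|^2)(A(t)-I)p+\big(\beta(|B(t)p|^2)-\beta(|p|^2)\big)p,
\end{equation*}
estimate the first summand by $\underline\beta\,\|A(t)-I\|_{C(\overbar D)}\,|p|\le c\,t\,|p|$, and reduce the rest to the claim $|\beta(|B(t)p|^2)-\beta(|p|^2)|\le c\,t$ \emph{uniformly in $p$} — which is exactly where a naive Lipschitz estimate for $\beta$ would introduce an uncontrollable factor $|p|^2$. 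To prove it I would use the representation $\beta(|B(t)p|^2)-\beta(|p|^2)=\int_0^1 2\,\beta'(|q(\sigma)|^2)\,\langle q(\sigma),(B(t)-I)p\rangle\,d\sigma$ with $q(\sigma):=p+\sigma(B(t)-I)p$, and two ingredients: testing Assumption~\ref{assmp:trans}, item~4, with $\eta=p$ shows that $r\mapsto r\beta'(r)$ is bounded, so $|\beta'(|q(\sigma)|^2)|\,|q(\sigma)|\le C_0/|q(\sigma)|$; and for $t$ small $\|B(t)-I\|_{C(\overbar D)}\le\min(c\,t,\tfrac{1}{2})$ (Lemma~\ref{lemma:phit}(i)), hence $|q(\sigma)|\ge\tfrac{1}{2}|p|$. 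Multiplying these through, the integrand is bounded by $4C_0\,\|B(t)-I\|_{C(\overbar D)}\le c\,t$ (the case $p=0$ being trivial), and integration in $\sigma$ gives the claim. Collecting everything yields $\|\mathcal A^t_q u-\mathcal A^0_q u\|_{W^{-1}_{\Gamma,q}(\Omega)}\le c\,t\,\|u\|_{W^1_q(\Omega)}$, and inserting both estimates into the master inequality proves (b) for $t\in[0,\tau]$ with $\tau$ small enough, the $C(\overbar\Omega)$-bound again following from the Sobolev embedding.
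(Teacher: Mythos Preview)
Your proof is correct, but it diverges from the paper's argument in two places worth noting.

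First, you invert $\mathcal A^t_q$ and measure the consistency error $\mathcal A^t_q u-\mathcal A^0_q u$ at the \emph{fixed} state $u$, whereas the paper inverts $\mathcal A_q=\mathcal A^0_q$ and controls $(\mathcal A^t_q-\mathcal A_q)u^t$ at the \emph{perturbed} state $u^t$; this forces the paper to first establish a uniform bound $\|u^t\|_{W^1_q}\le c$. Your route is cleaner here, since the a~priori boundedness of $u^t$ becomes unnecessary.

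Second, and more interestingly, the paper handles the nonlinear term by a different splitting: it inserts $\xi(t)B(t)\nabla u^t$ and invokes the global Lipschitz estimate \eqref{eq:continuity} of Lemma~\ref{lem:monoton_continuity}, namely $|\beta(|\eta|^2)\eta-\beta(|\theta|^2)\theta|\le K|\eta-\theta|$ with $\eta=B(t)p$, $\theta=p$, which immediately gives a bound $\le K\|B(t)-I\|_{C}\,|p|\le ct\,|p|$ in one stroke. Your route---splitting off $(A(t)-I)p$ and then proving the uniform-in-$p$ estimate $|\beta(|B(t)p|^2)-\beta(|p|^2)|\le ct$ via the boundedness of $r\mapsto r\beta'(r)$ extracted from Assumption~\ref{assmp:trans}, item~4---is valid but longer: you are effectively re-deriving a consequence of \eqref{eq:continuity} from scratch. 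The paper's trick is worth remembering, since the Lipschitz bound for the full monotone map $\eta\mapsto\beta(|\eta|^2)\eta$ absorbs the dangerous $|p|^2$ factor automatically, without any separate analysis of $\beta'$.
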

\begin{proof}
	Let us first show $(a)$. By Lemma~\ref{lem:invertibility} we find 
	$\delta>0$ and $q_0>2$, so that 
	$ 
	\|u^t\|_{W^1_q(\Omega)} = \|(\mathcal A^t_q)^{-1}f^t\|_{W^1_q(\Omega)} \le c\|f^t\|_{W^{-1}_{\Gamma,q}(\Omega)} 
$
for all $q\in (2,q_0]$ and all $t\in [0,\delta]$	
and using H\"older's inequality the right hand side can be further estimated
\ben
\|f^t\|_{W^{-1}_{\Gamma,q}(\Omega)} = \sup_{\substack{  \varphi\in W^1_{\Gamma,q'}(\Omega) \\ \|\varphi\|_{W^1_{q'}\le 1 }}} \left|\int_\Omega f^t \varphi \; dx\right| \le \|f^t\|_{L_q(\Omega)}.  
\een
The boundedness of $\|f^t\|_{L_q(\Omega)}$ follows from Lemma~\ref{lemma:phit}.
So $u^t$ is bounded in $W^1_{\Gamma,q}(\Omega)$ with $q\in (2,q_0]$. Now by definition $u^t$ and 
$u:=u^0$ satisfy (setting $\mathcal A_q := \mathcal A^0_q$) the operator equations
$
\mathcal A^t_q u^t = f^t
$ and  
$
\mathcal A_q u = f. 
$
Therefore the difference $z^t:=u^t-u$ solves
$
\mathcal A_qz^t =- (\mathcal A^t_q-\mathcal A_q)u^t - (f^t-f) \in W^{-1}_{\Gamma,q}(\Omega)
$
and hence using again Lemma~\ref{lem:invertibility}
  we find $c>0$ so that for all $t$,
\ben\label{eq:estimate_zt}
\begin{split}
	\|z^t\|_{W^1_q(\Omega)} & \le c\|- (\mathcal A^t_q-\mathcal A_q)u^t - (f^t-f)\|_{W^{-1}_{\Gamma,q}(\Omega)} \\
	& \le c(\|(\mathcal A^t_q-\mathcal A_q)u^t\|_{W^{-1}_{\Gamma,q}(\Omega)} +    \|f^t-f\|_{W^{-1}_{\Gamma,q}(\Omega)}).
\end{split}
\een
Furthermore we have for a.e. $x\in \Omega$ and all $t\in [0,\delta]$,
\ben\label{eq:estimate_op_At}
\begin{split}
	|\beta(|B(t,x)\nabla u^t(x) & |^2)A(t,x)\nabla u^t(x) - \beta(|\nabla u^t(x)|^2)\nabla u^t(x)| 
\le \\
& \underbrace{| A(t,x) -  \xi(t,x)B^\top(t,x) |}_{\le 
	ct, \text{ by } Lemma~\ref{lemma:phit}, 
	(i)}\underbrace{|\beta(|B(t,x)\nabla u^t(x)|^2)\nabla u^t(x)|}_{\le c 
	|\nabla u^t(x)|, \text{ by } Lemma~\ref{lem:postive_At}}  \\
& + \underbrace{|\beta(|B(t,x)\nabla u^t(x)|^2)B(t,x)\nabla u^t(x) - \beta(|\nabla u^t(x)|^2)\nabla u^t(x)|}_{\le K|B(t,x)-I||\nabla 
	u^t(x)|, \text{ by }\eqref{eq:continuity}} \le ct |\nabla u^t(x)|. 
\end{split}
\een
So using again H\"older's inequality yields
\ben\label{eq:At_A}
\begin{split}
\|(&\mathcal A^t_q- \mathcal A_q)u^t\|_{W^{-1}_{\Gamma,q}(\Omega)} \\ 
& = \sup_{\substack{  \varphi\in W^1_{\Gamma,q'}(\Omega) \\ 
		\|\varphi\|_{W^1_{q'}\le 1 }}} \left|\int_\Omega 
\underbrace{(\beta(|B(t)\nabla u^t|^2)A(t)\nabla u^t - \beta(|\nabla 
	u^t|^2)\nabla u^t)\cdot \nabla \varphi }_{\le ct |\nabla 
	u^t||\nabla \varphi|, \text{ by }\eqref{eq:estimate_op_At}}+ 
\underbrace{(\xi(t)-1)}_{ \le ct, \text{ by } Lemma~\ref{lemma:phit}, (i)}u^t 
	\varphi \; dx \right|\\
	\le & ct\bigg( \sup_{\substack{  \varphi\in 
		W^1_{\Gamma,q'}(\Omega) \\ \|\varphi\|_{W^1_{q'}\le 1 
		}}}\int_\Omega |\nabla u^t| |  \nabla \varphi| \; dx + 
 \sup_{\substack{  \varphi\in 
		 W^1_{\Gamma,q'}(\Omega) \\ \|\varphi\|_{W^1_{q'}\le 1 }}}
 \int_\Omega |u^t \varphi|\; dx\bigg)\le   c t \underbrace{\|u^t\|_{W^1_q(\Omega)}}_{\le c} \le c t 
\end{split}
\een
and similarly
\ben\label{eq:f_ft}
\|f^t-f\|_{W^{-1}_{\Gamma,q}(\Omega)} \le  
\underbrace{\|f^t-f\|_{L_q(\Omega)}}_{= o(1), Lemma~\ref{lemma:phit}, (ii)}. 
\een
Now using \eqref{eq:At_A}  and   \eqref{eq:f_ft} to estimate the right hand side of \eqref{eq:estimate_zt} yields
$\lim_{t\searrow 0}\|u^t-u\|_{W^1_q(\Omega)} = 0 $.  Since $q>2$ the space $W^1_{\Gamma,q}(\Omega)$ embeds 
continuously into $C_{\Gamma}(\Omega)$ and we obtain 
$\lim_{t\searrow 0}\|u^t-u\|_{C(\overbar \Omega)}=0$.  

Finally item $(b)$ follows since for $f\in W^1_q(D)$, 
$q>2$, we obtain the estimate 
$\|f^t-f\|_{L_q(D)} \le ct$ (cf. Lemma~\ref{lemma:phit}, (ii)). This finishes the proof.  
\end{proof}

\subsection{Analysis of the averaged adjoint state equation}
At first we introduce for fixed 
$y\in \overbar \Omega$ and $t\ge 0$ the Lagrangian function:
\ben
G_y(t,v,w) := \Psi(\Phi_t(y),v(y))  + \int_{\Omega} \beta(|B(t)\nabla v|^2)A(t)\nabla v\cdot \nabla w + \xi(t)vw  - f^t w\; dx, 
\een
where $v\in W^1_q(\Omega)$ and $w\in W^1_{q'}(\Omega)$ with $q>2$ and $q':= q/(q-1)$. 
Notice that $G_y=G_y^X$ also depends  on the vector field $X$, however, to keep the notation 
simple we omit this dependency. In the rest of the paper we assume $f\in W^1_q(D)$. 

\begin{definition}
	Let $y\in \overbar \Omega$ be fixed and $q\in (2,q_0]$, where $q_0$ is as in Lemma~\ref{lem:invertibility}.  We introduce the \emph{averaged 
		adjoint equation} as:
	\ben\label{eq:averaged_G}
	\text{ Find } p_{y}^t\in  W^1_{\Gamma,q'}(\Omega), \quad \int_0^1 d_v G_y(t,su^t+(1-s)u,p_{y}^t)(\varphi)\; ds =0 \quad \text{ for all } \varphi \in W^1_{\Gamma,q}(\Omega). 
	\een
	The function $p_{y}^t$ is referred to as \emph{averaged 
		adjoint state}.
\end{definition}

The reason for introducing the averaged adjoint equation is the following identity
\ben\label{eq:identity_important}
G_y(t,u^t,p^t_y)-G_y(t,u,p^t_y) = \int_0^1 d_v G_y(t,su^t+(1-s)u,p^t_y)(u^t-u)\; ds = 0,  
\een
where the last equality follows in view of \eqref{eq:averaged_G} and 
$u^t - u \in W^1_{\Gamma,q}(\Omega)$. Now with the Lagrangian $G_y$ the 
shape functions $J_\infty(\cdot)$ and $j(\cdot)$ 
can be expressed as
\ben\label{eq:J_pert_red}
J_\infty(\Omega_t^\Gamma) =  \max_{y\in \overbar 
	\Omega}j(\Omega^{\Gamma,y}_t), \quad j(\Omega_t^{\Gamma,y}) = 
G_{y}(t,u,p^t_y), \quad y\in  \overbar \Omega. 
 \een
Consequently it suffice to study the differentiability of $t\mapsto \max_{y\in \overbar \Omega}G_y(t,u,p^t_y)$ and 
$t\mapsto G_y^X(t,u,p_y^t)$ in order
to prove that $J_\infty(\cdot)$ is Eulerian semi-differentiable at $\Omega^\Gamma\in \Xi$
	and $j(\cdot)$ is shape differentiable at all $\Omega^{\Gamma,y}$, where $\Omega^\Gamma\in \Xi$ and 
	    $y\in \overbar \Omega$.
This is the content of the following two sections.  At first we 
study the averaged adjoint equation. We notice that \eqref{eq:averaged_G} is equivalent to
\ben\label{eq:averaged_adjoint_equation}
\int_\Omega b^t(x,u^t,u) \; L p_{y}^t \cdot 
L\varphi \; dx = -  \bar \Psi^t(y,u^t,u) \varphi (y) \quad\text{ for all } 
\varphi \in W^1_{\Gamma,q}(\Omega), 
\een
where
\ben
b^t(x,u^t,u) := \int_0^1 \partial_\zeta a^t(x,sLu^t(x)+(1-s)Lu(x)) \; ds, 	
\een
\ben
\bar \Psi^t(y,u^t(y),u(y)) := \int_0^1 \partial_{\zeta} \Psi(\Phi_t(y),su^t(y)+(1-s)u(y))\; ds. 	
\een
In view of 
\ben
\partial_\zeta a^t(x,\zeta) = \begin{pmatrix}
	 \xi(t,x) \zeta_0 \\ 
       	\beta(|B(t,x)\hat\zeta|^2)  A(t,x) + 2 
\beta'(|B(t,x)\hat\zeta|^2) \; A(t,x)\hat \zeta \otimes 
B(t,x)\hat\zeta  
\end{pmatrix}, \quad \zeta = \begin{pmatrix}
	\zeta_0 \\ \hat\zeta
\end{pmatrix},
\een
it immediately follows from Assumption~\ref{assmp:trans}, item 4, that  there is 
a constant $c>0$ so that $\|b^t(\cdot,u^t,u)\|_{L_\infty(\Omega)}\le c$ 
for all $t$. Notice 
that at $t=0$ equation \eqref{eq:averaged_adjoint_equation} reduces to the 
usual adjoint state equation:
\ben\label{eq:adjoint}
\text{ find } p_{y}\in W^1_{q'}(\Omega), \quad \int_{\Omega} b(x,u) Lp_{y} \cdot L\varphi \;dx 
= - \partial_u \Psi(y,u(y)) \varphi(y)\quad \text{ for all }\varphi \in  W^1_{\Gamma,q}(\Omega), 
\een
where $b(x,\cdot):=b^0(x,\cdot,\cdot )$.  
We associate with $b^t$ the (linear) operator
$
\mathcal B^t_{q'}:W^1_{\Gamma,q'}(\Omega) \rightarrow W^{-1}_{\Gamma,q'}(\Omega)$ defined by 
$\langle \mathcal \mathcal B^t_qv,w\rangle := \int_\Omega b^t(x,u^t,u) Lv \cdot Lw \, 
dx.$

The proof of the following lemma follows \cite{MR2823475}.
\begin{lemma}\label{lem:operator_Bt}
	Let $\Omega^\Gamma\in \Xi$ with associated $q_0>2$ be given. Then 
there is exists $\delta >0$, so that the averaged operator $\mathcal B^t_{q'}:W^1_{\Gamma,q'}(\Omega) \rightarrow W^{-1}_{\Gamma,q'}(\Omega) $
is an isomorphism for all $t \in [0,\tau]$.  Moreover, there is a constant $c>0$, so that for all 
$t\in [0,\delta]$,
\ben
\|(\mathcal B^t_{q'})^{-1}f- (\mathcal B^t_{q'})^{-1}g\|_{W^1_{\Gamma,q'}(\Omega)} \le c \|f-g\|_{W^{-1}_{\Gamma,q'}(\Omega)} \quad \text{ for all } f,g\in W^1_{\Gamma,q'}(\Omega).	
\een
\end{lemma}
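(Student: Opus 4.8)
The strategy is to avoid working directly on the space $W^1_{\Gamma,q'}(\Omega)$ with the conjugate exponent $q'<2$, where the Gr\"oger theory of Theorem~\ref{thm:groeger} is not available, and instead to observe that $\mathcal B^t_{q'}$ is a \emph{linear} operator whose coefficient matrix $b^t(\cdot,u^t,u)$ is symmetric (this is read off the explicit form of $\partial_\zeta a^t$ together with $A(t)=\xi(t)\,\partial\Phi_t^{-1}\partial\Phi_t^{-\top}=\xi(t)B(t)^\top B(t)$), so that $\mathcal B^t_{q'}$ is the Banach-space adjoint of the companion operator $\mathcal B^t_q:W^1_{\Gamma,q}(\Omega)\to W^{-1}_{\Gamma,q}(\Omega)$, $\langle\mathcal B^t_qv,w\rangle:=\int_\Omega b^t(x,u^t,u)\,Lv\cdot Lw\,dx$, which lives at the exponent $q\in(2,q_0]$ where Gr\"oger applies. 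So the plan is: (1) check that $\mathcal B^t_q$ meets the hypotheses of Theorem~\ref{thm:groeger} with constants that do not depend on $t$; (2) conclude isomorphy of $\mathcal B^t_q$, hence of its adjoint $\mathcal B^t_{q'}$, with a uniform bound. For (1) note that $b^t(\cdot,u^t,u)\in L_\infty(\Omega)$ has already been established and that $\zeta=0\mapsto 0$; for the linear map $\zeta\mapsto b^t(x,u^t(x),u(x))\zeta$ the monotonicity and Lipschitz conditions in \eqref{ass:b} are exactly the ellipticity and boundedness of the matrix $b^t(x,u^t,u)$, so everything reduces to the pointwise bounds below.

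The crux is to show that, with $m:=\min\{k,1\}$ and $M:=\max\{K,1\}$, one has $(m-\eps)|\eta|^2\le b^t(x,u^t,u)\eta\cdot\eta\le(M+\eps)|\eta|^2$ for a.e.\ $x\in\Omega$, all $\eta\in\R^3$, and all sufficiently small $t$. Expanding $b^t(x,u^t,u)=\int_0^1\partial_\zeta a^t(x,sLu^t(x)+(1-s)Lu(x))\,ds$ and using $A(t)=\xi(t)B(t)^\top B(t)$, the gradient block of the $s$-integrand, tested against $\hat\eta\in\R^2$, equals $\xi(t)\big[\beta(|B(t)w|^2)\,|B(t)\hat\eta|^2+2\beta'(|B(t)w|^2)\,(B(t)w\cdot B(t)\hat\eta)^2\big]$ with $w=s\nabla u^t+(1-s)\nabla u$; item~4 of Assumption~\ref{assmp:trans}, applied to the vectors $B(t)w$ and $B(t)\hat\eta$, sandwiches the bracket between $k|B(t)\hat\eta|^2$ and $K|B(t)\hat\eta|^2$, after which Lemma~\ref{lem:postive_At} pins $\xi(t)$ and $|B(t)\hat\eta|^2/|\hat\eta|^2$ within $1\pm\eps$, while the $\zeta_0$-block contributes $\xi(t)\eta_0^2$. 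Symmetry and positivity of $b^t(x,u^t,u)$ then upgrade the upper estimate to $|b^t(x,u^t,u)\eta|\le(M+\eps)|\eta|$. I expect this bookkeeping to be the main obstacle: the naive idea of replacing $A(t),B(t),\xi(t)$ by $I,I,1$ and estimating the error fails because $w$ ranges over all of $\R^2$ and $\beta'$ is not uniformly continuous; the resolution, exactly as in the proof of Lemma~\ref{lem:invertibility}, is to absorb $B(t)$ into the arguments of $\beta$ and $\beta'$ and let the \emph{global} inequality in item~4 do the work there, so that only the harmless prefactor $\xi(t)$ and the conjugation $|B(t)\hat\eta|$ remain to be controlled by Lemma~\ref{lem:postive_At}.

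For (2), with $q\in(2,q_0]$ fixed, choose $\eps>0$ so small that $M_q\big(1-(m-\eps)^2/(M+\eps)^2\big)^{1/2}<1$ -- possible since $(m-\eps)/(M+\eps)\to m/M$ as $\eps\searrow0$ and $M_q(1-m^2/M^2)^{1/2}<1$ by the choice of $q_0$ in Lemma~\ref{lem:invertibility} -- and let $\delta>0$ be the number from Lemma~\ref{lem:postive_At} associated with this $\eps$. Then Theorem~\ref{thm:groeger} makes $\mathcal B^t_q$ an isomorphism for every $t\in[0,\delta]$, with $\|(\mathcal B^t_q)^{-1}\|\le c_q$ where $c_q$ depends only on $\Omega^\Gamma,q,m-\eps,M+\eps$, hence not on $t$ (and, by the corollary following Theorem~\ref{thm:groeger}, even independent of $q$ for small $q$). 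Finally, symmetry of $b^t(\cdot,u^t,u)$ gives $\langle\mathcal B^t_qv,w\rangle=\langle\mathcal B^t_{q'}w,v\rangle$ for $v\in W^1_{\Gamma,q}(\Omega)$, $w\in W^1_{\Gamma,q'}(\Omega)$, so that under the reflexive identifications $(W^{-1}_{\Gamma,q}(\Omega))^*\cong W^1_{\Gamma,q'}(\Omega)$ and $(W^1_{\Gamma,q}(\Omega))^*=W^{-1}_{\Gamma,q'}(\Omega)$ the operator $\mathcal B^t_{q'}$ is the Banach-space adjoint of $\mathcal B^t_q$; being the adjoint of an isomorphism it is an isomorphism with $\|(\mathcal B^t_{q'})^{-1}\|=\|(\mathcal B^t_q)^{-1}\|\le c_q$ for all $t\in[0,\delta]$, which by linearity is precisely the asserted Lipschitz bound for $(\mathcal B^t_{q'})^{-1}$. (Should one prefer not to invoke symmetry, the same conclusion follows by replacing $\mathcal B^t_q$ by the operator with transposed coefficient $b^t(x,u^t,u)^\top$, which satisfies the same ellipticity and boundedness bounds.)
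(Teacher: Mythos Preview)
Your argument is correct and follows the same route as the paper: verify that the linear map $\zeta\mapsto b^t(x,u^t,u)\zeta$ satisfies the hypotheses~\eqref{ass:b} with constants $m-\eps$ and $M+\eps$ independent of $t$, apply Theorem~\ref{thm:groeger} to get that $\mathcal B^t_q$ is an isomorphism on $W^1_{\Gamma,q}(\Omega)$ with a uniform bound on its inverse, and then pass to the adjoint to conclude the same for $\mathcal B^t_{q'}$ (the paper phrases this last step via the closed range theorem). Your exposition is in fact more careful than the paper's: you make explicit the relation $A(t)=\xi(t)B(t)^\top B(t)$ that is needed to see the symmetry of $b^t(\cdot,u^t,u)$ underlying the identification $(\mathcal B^t_q)^*=\mathcal B^t_{q'}$, and you correctly remark that symmetry can be bypassed by applying Gr\"oger to the transposed coefficient instead.
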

\begin{proof}
Let $\eps >0$ be fixed. Using Assumption~\ref{assmp:trans} it is readily checked 
that there is $\delta >0$ so that for all $t\in [0,\delta]$ the function
 $b^t(x,\zeta):= b^t(x,u^t,u)\zeta $ satisfies \eqref{ass:b} with 
 $m=\min\{1,k\}-\eps$ and $M=\max\{K,1\}+\eps$ for $t$ sufficiently small. Hence there 
 is $\delta >0$ so that the mapping
 $ \mathcal B_q^t:W^1_{\Gamma,q}(\Omega) \rightarrow W^{-1}_{\Gamma,q}(\Omega)$ 
	is an isomorphism for all $t\in [0,\delta]$. Thus by the closed range theorem also the adjoint 
	$(\mathcal B_{q}^t)^*= \mathcal B_{q'}^t:W^1_{\Gamma,q'}(\Omega) \rightarrow 
	W^{-1}_{\Gamma,q'}(\Omega)$ is an isomorphism with continuous inverse and we 
	finish the proof. 
\end{proof}
\begin{lemma}\label{lem:averaged_weak}
	Let $\Omega^\Gamma\in \Xi$ with associated $q_0>2$ be given. Assume $y_t:\R \rightarrow \R^2$ is a 
	function that is continuous from the right in $t=0$ with $y(0)=y\in \overbar \Omega$. For $t\ge 
    0$ we denote by $p_{y_t}^t\in W^1_{\Gamma,q'}(\Omega)$ the solution of 
	\ben\label{eq:averaged_adjoint}
	\int_{\Omega} b^t(x,u^t,u)Lp_{y_t}^t \cdot L\varphi  
	\;dx =   - \bar \Psi^t(y_t,u^t(y_t),u(y_t))  \varphi (y_t) \quad 
	\text{ for all } \varphi\in W^1_{\Gamma,q}(\Omega),
	\een
     where $q>2$ is the conjugate of $q'$, that is, $1/{q'}+1/q=1$.
    Then $1<q'<2$ and  we get
    $
    p_{y_t}^t \rightharpoonup p_y$  weakly in $W^1_{\Gamma,q'}(\Omega),$
where $p_y$ denotes the solution of \eqref{eq:adjoint}. 
\end{lemma}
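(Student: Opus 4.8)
The plan is to bound the family $\{p_{y_t}^t\}$ uniformly in the reflexive space $W^1_{\Gamma,q'}(\Omega)$, extract a weakly convergent subsequence, pass to the limit in \eqref{eq:averaged_adjoint} to identify the limit with the unique solution $p_y$ of \eqref{eq:adjoint}, and finally upgrade subsequential convergence to convergence of the whole family by a standard argument.

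First I would bound the right-hand side of \eqref{eq:averaged_adjoint} in $W^{-1}_{\Gamma,q'}(\Omega)=(W^1_{\Gamma,q}(\Omega))^*$. Since $q>2$, the Sobolev embedding $W^1_{\Gamma,q}(\Omega)\hookrightarrow C_\Gamma(\Omega)$ gives $|\varphi(y_t)|\le\|\varphi\|_{C(\overbar\Omega)}\le c_S\|\varphi\|_{W^1_q(\Omega)}$, while $|\bar\Psi^t(y_t,u^t(y_t),u(y_t))|\le\sup_{s\in[0,1]}|\partial_\zeta\Psi(\Phi_t(y_t),su^t(y_t)+(1-s)u(y_t))|$ is bounded uniformly for small $t$ because $\Phi_t(y_t)$ stays in the compact set $\overbar D$, $\{u^t\}$ is bounded in $C(\overbar\Omega)$ (Corollary~\ref{cor:sensi_u_ut}), and $\partial_\zeta\Psi$ is continuous. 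Hence the right-hand side of \eqref{eq:averaged_adjoint} is uniformly bounded in $W^{-1}_{\Gamma,q'}(\Omega)$ for small $t$, and Lemma~\ref{lem:operator_Bt} yields $\|p_{y_t}^t\|_{W^1_{q'}(\Omega)}\le C$ for all small $t$. As $1<q'<2$, the space $W^1_{\Gamma,q'}(\Omega)$ is reflexive, so along any sequence $t_k\searrow 0$ there are a (not relabelled) subsequence and some $\bar p\in W^1_{\Gamma,q'}(\Omega)$ with $p_{y_{t_k}}^{t_k}\rightharpoonup\bar p$ weakly in $W^1_{\Gamma,q'}(\Omega)$; by Corollary~\ref{cor:sensi_u_ut} I may simultaneously assume $\nabla u^{t_k}\to\nabla u$ a.e.\ in $\Omega$.

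Next I would pass to the limit in \eqref{eq:averaged_adjoint} for a fixed $\varphi\in W^1_{\Gamma,q}(\Omega)$. For the bilinear term I write $\int_\Omega b^{t_k}(x,u^{t_k},u)Lp_{y_{t_k}}^{t_k}\cdot L\varphi\,dx=\int_\Omega Lp_{y_{t_k}}^{t_k}\cdot g^{t_k}\,dx$ with $g^{t_k}:=(b^{t_k}(x,u^{t_k},u))^{\!\top}L\varphi$. From $A(t),B(t)\to I$ and $\xi(t)\to 1$ uniformly on $\overbar D$ (Lemma~\ref{lemma:phit}, Lemma~\ref{lem:postive_At}), from $\nabla u^{t_k}\to\nabla u$ a.e., and from the continuity of $\beta$ and $\beta'$ (Assumption~\ref{assmp:trans}), one gets $b^{t_k}(x,u^{t_k},u)\to b(x,u)$ a.e.\ in $\Omega$; since $\|b^{t_k}(\cdot,u^{t_k},u)\|_{L_\infty(\Omega)}\le c$ uniformly and $|L\varphi|\in L_q(\Omega)$, dominated convergence gives $g^{t_k}\to g:=(b(x,u))^{\!\top}L\varphi$ strongly in $L_q(\Omega)^3$. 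Combined with $Lp_{y_{t_k}}^{t_k}\rightharpoonup L\bar p$ weakly in $L_{q'}(\Omega)^3$, the standard weak-times-strong argument yields $\int_\Omega b^{t_k}Lp_{y_{t_k}}^{t_k}\cdot L\varphi\,dx\to\int_\Omega b(x,u)L\bar p\cdot L\varphi\,dx$. For the right-hand side, $y_{t_k}\to y$ and $\Phi_{t_k}(y_{t_k})\to y$; uniform convergence $u^{t_k}\to u$ in $C(\overbar\Omega)$ and continuity of $u$ give $u^{t_k}(y_{t_k})\to u(y)$ and $u(y_{t_k})\to u(y)$, so continuity of $\partial_\zeta\Psi$ with dominated convergence in the $s$-integral gives $\bar\Psi^{t_k}(y_{t_k},u^{t_k}(y_{t_k}),u(y_{t_k}))\to\partial_u\Psi(y,u(y))$, while $\varphi(y_{t_k})\to\varphi(y)$ by continuity of $\varphi$. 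Letting $k\to\infty$ shows that $\bar p$ satisfies $\int_\Omega b(x,u)L\bar p\cdot L\varphi\,dx=-\partial_u\Psi(y,u(y))\varphi(y)$ for all $\varphi\in W^1_{\Gamma,q}(\Omega)$, i.e.\ \eqref{eq:adjoint}; since $\mathcal B_{q'}=\mathcal B^0_{q'}$ is an isomorphism (Lemma~\ref{lem:operator_Bt}), the solution is unique, so $\bar p=p_y$. As the weak limit is independent of the chosen sequence and subsequence, a standard subsequence argument gives $p_{y_t}^t\rightharpoonup p_y$ weakly in $W^1_{\Gamma,q'}(\Omega)$ as $t\searrow 0$.

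The step I expect to be the main obstacle is the passage to the limit in the bilinear term $\int_\Omega b^{t_k}(x,u^{t_k},u)Lp_{y_{t_k}}^{t_k}\cdot L\varphi\,dx$: it pairs a sequence converging only weakly ($Lp_{y_{t_k}}^{t_k}$) with one converging only a.e.\ under a uniform $L_\infty$ bound ($b^{t_k}$), so one must first turn the a.e.\ convergence of $b^{t_k}$ into strong $L_q$-convergence of $(b^{t_k})^{\!\top}L\varphi$ — where the uniform bound on $b^{t_k}$ and $L\varphi\in L_q(\Omega)$ are essential — before the weak-times-strong product can be closed. A secondary delicate point is that the right-hand side is a Dirac-type functional evaluated at the moving point $y_t$; its uniform bound and its limit both rely on $q>2$, which makes $W^1_{\Gamma,q}(\Omega)\hookrightarrow C_\Gamma(\Omega)$ available, and on the uniform convergence $u^t\to u$ in $C(\overbar\Omega)$ from Corollary~\ref{cor:sensi_u_ut}.
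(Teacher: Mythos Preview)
Your proposal is correct and follows essentially the same strategy as the paper: bound the Dirac-type right-hand side in $W^{-1}_{\Gamma,q'}(\Omega)$ via the Sobolev embedding $W^1_{\Gamma,q}(\Omega)\hookrightarrow C_\Gamma(\Omega)$, invoke Lemma~\ref{lem:operator_Bt} for a uniform bound on $p^t_{y_t}$, extract a weakly convergent subsequence, pass to the limit in \eqref{eq:averaged_adjoint}, and conclude by uniqueness. The paper's proof is in fact terser than yours---it simply writes ``passing to the limit in \eqref{eq:averaged_adjoint}'' without spelling out the weak-times-strong argument for the bilinear term or the a.e.\ convergence of $b^{t_k}(\cdot,u^{t_k},u)$; your detailed justification of that step (extracting a further subsequence with $\nabla u^{t_k}\to\nabla u$ a.e., then using dominated convergence on $(b^{t_k})^\top L\varphi$) is exactly what is needed to make that line rigorous.
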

\begin{proof}
	By Sobolev's embedding the inclusion mapping 
	$ E_\Gamma: W^1_{\Gamma,q}(\Omega) \rightarrow C_\Gamma(\Omega)$ is 
	continuous for all $q>2$. Thus the adjoint 
	$E_\Gamma^*:C_\Gamma(\Omega) \rightarrow W^1_{\Gamma,q}(\Omega)$ is 
	continuous, too. As the mapping 
	$ \alpha^t \delta_{y_t}:C_\Gamma(\Omega)\rightarrow \R,\; f \mapsto \alpha^t f(y_t)  $
	, where  $ \alpha^t =: \bar \Psi^t(y_t,u^t(y_t),u(y_t)) \in \R$, is 
	continuous, we can rewrite  
	\eqref{eq:averaged_adjoint} as
	\ben
      	\langle \mathcal B_{q'}^tp_{y_t}^t, 
	\varphi\rangle_{W^1_{\Gamma,q'},W^{-1}_{\Gamma,q'}} =  -  \langle 
	E_{\Gamma}^* (\alpha^t 
	\delta_{y_t}),\varphi\rangle_{W^1_{\Gamma,q'},W^{-1}_{\Gamma,q'}} 
	\quad \text{ for all } \varphi\in W^1_{\Gamma,q}(\Omega).
	\een
         Now applying 
	Lemma~\ref{lem:averaged_weak} yields
	$
     \|p_{y_t}^t\|_{W^1_{q'}(\Omega)} \le c \|E_{\Gamma}^* (\alpha^t \delta_{y_t})\|_{W^{-1}_{\Gamma,q'}(\Omega)} \le c \alpha^t\| \delta_{y_t}\|_{(C_\Gamma(\Omega))^*}\le c	
     $	
     for all $t\in [0,\delta]$. 
        So for each real nullsequence $(t_n)$ there is a subsequence and $z\in W^1_{\Gamma,q'}(\Omega)$, still 
	indexed the same, such that  
	$p_{y_{t_n}} \rightharpoonup z$ in $W^1_{\Gamma,q'}(\Omega)$. Therefore 
	passing to the limit in \eqref{eq:averaged_adjoint}, we conclude by 
	uniqueness of the adjoint state equation that $z=p_y$. This also shows $p_{y_{t}} \rightharpoonup p_y$ in 
	$W^1_{\Gamma,q'}(\Omega)$ as $t\searrow 0$. 	
\end{proof}

\subsection{Shape derivative of $j(\cdot)$ via averaged adjoint}
 Let $X\in \ac C^1(D,\R^2)$ be a given vector field and $\Phi_t$ the 
corresponding flow. Let $\Omega^\Gamma\in \Xi$ and $y\in \overbar \Omega$. Then the perturbation of the set
$\Omega^{\Gamma,y}=(\Omega^\Gamma,y)$ is defined by $ \Omega_t^{\Gamma,y} := (\Omega_t,\Gamma_t,y_t)$, where
$\Omega_t := \Phi_t(\Omega)$, $\Gamma_t:=\Phi_t(\Gamma)$ and $y_t := \Phi_t(y)$. 
The Eulerian semi-derivative of $j(\cdot)$ at $\Omega^{\Gamma,y}$ in direction $X$ is then defined by
$
dj(\Omega^{\Gamma,y})(X) = \lim_{t\searrow 0}(j(\Omega^{\Gamma,y}_t) - j(\Omega^{\Gamma,y}))/t.
$
Let us now prove that $j(\cdot)$ is in fact shape differentiable.
\begin{theorem}\label{thm:shape_derivative_j}
	Let $\Omega^\Gamma\in \Xi$ and $y\in \overbar \Omega$ be given 
	and assume $2<q<q_0$.
   The shape function $j(\cdot)$ is 
    shape differentiable at every $ \Omega^{\Gamma,y}
    $  and the derivative in direction $X\in \ac C^1(D,\R^2)$ is given by
    \ben\label{eq:shape_derivative_j}
    dj(\Omega^{\Gamma,y})(X) = \partial_t G_y^X(0,u,p_y),
    \een
    where $(u,p_y)\in W^1_{\Gamma,q}(\Omega)\times  W^1_{\Gamma,q'}(\Omega)$ 
    solves \eqref{eq:state} and \eqref{eq:adjoint}, respectively.  
\end{theorem}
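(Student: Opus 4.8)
The plan is to run the averaged adjoint scheme: use the representation \eqref{eq:J_pert_red}, the averaged adjoint identity \eqref{eq:identity_important}, and the weak convergence $p^t_y\rightharpoonup p_y$ from Lemma~\ref{lem:averaged_weak}, so that no differentiation of $t\mapsto u^t$ is needed.

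First I would collapse the difference quotient. For any $w\in W^1_{\Gamma,q'}(\Omega)$ the integral occurring in $G_y(0,u,w)$ is precisely the weak state equation \eqref{eq:state} tested against $w$ and therefore vanishes, so $G_y(0,u,w)=\Psi(y,u(y))=j(\Omega^{\Gamma,y})$; in particular $j(\Omega^{\Gamma,y})=G_y(0,u,p^t_y)$ for every small $t\ge0$. Together with $j(\Omega^{\Gamma,y}_t)=G_y(t,u,p^t_y)$ from \eqref{eq:J_pert_red} this yields
\[
\frac{j(\Omega^{\Gamma,y}_t)-j(\Omega^{\Gamma,y})}{t}
=\frac{G_y(t,u,p^t_y)-G_y(0,u,p^t_y)}{t}
=\int_0^1\partial_t G_y(\lambda t,u,p^t_y)\,d\lambda ,
\]
the last step being the fundamental theorem of calculus applied to $s\mapsto G_y(s,u,p^t_y)$ with $p^t_y$ held fixed. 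This map is $C^1$ on $[0,\delta]$ because $s\mapsto\Phi_s(y)$, $s\mapsto A(s)$, $s\mapsto B(s)$, $s\mapsto\xi(s)$ and — using $f\in W^1_q(D)$ — $s\mapsto f^s$ depend $C^1$-smoothly on $s$ with values in $\R^2$, $C(\overline D,\R^{2,2})$, $C(\overline D,\R^{2,2})$, $C(\overline D)$ and $L_q(\Omega)$ respectively, while $\beta\in C^1$ and $\Psi\in C^1$.

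The crux is the limit $t\searrow0$ in $\int_0^1\partial_t G_y(\lambda t,u,p^t_y)\,d\lambda$. The integrand $\partial_t G_y(\lambda t,u,w)$ decomposes into the point term $\nabla_x\Psi(\Phi_{\lambda t}(y),u(y))\cdot\dot\Phi_{\lambda t}(y)$, which does not involve $w$ and tends to $\nabla_x\Psi(y,u(y))\cdot X(y)$ uniformly in $\lambda\in[0,1]$, plus terms that are affine in $Lw$, namely $\int_\Omega M_i(\lambda t,x)\cdot Lw(x)\,dx$ with coefficients $M_i$ assembled from the $s$-derivatives of $A,B,\xi,f^s$ at $s=\lambda t$ and from $\beta,\beta'$ along the flow. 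By Corollary~\ref{cor:sensi_u_ut} and the $L_\infty$-bound on $b^t$ recorded after \eqref{eq:adjoint} these coefficients are bounded, uniformly in $t$, in the space dual to the one in which $Lp^t_y$ lives, and they converge as $t\searrow0$ to $M_i(0,\cdot)$ strongly in that dual space, uniformly for $\lambda\in[0,1]$ (uniform continuity of $s\mapsto M_i(s,\cdot)$ on the compact interval $[0,\delta]$). Since Lemma~\ref{lem:averaged_weak}, applied with the constant curve $y_t\equiv y$, gives $p^t_y\rightharpoonup p_y$ weakly in $W^1_{\Gamma,q'}(\Omega)$ with $\|p^t_y\|_{W^1_{q'}(\Omega)}$ bounded, splitting $M_i(\lambda t)=(M_i(\lambda t)-M_i(0))+M_i(0)$ and pairing strong convergence against the weakly convergent $Lp^t_y$ shows $\partial_t G_y(\lambda t,u,p^t_y)\to\partial_t G_y(0,u,p_y)$ uniformly in $\lambda$. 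Passing the limit through $\int_0^1\cdot\,d\lambda$ gives $dj(\Omega^{\Gamma,y})(X)=\partial_t G_y^X(0,u,p_y)$, i.e.\ \eqref{eq:shape_derivative_j}, and the argument is the same for every $y\in\overline\Omega$ and every $\Omega^\Gamma\in\Xi$. I expect this limit passage to be the only delicate point: only weak convergence of the adjoint is available, so it must be paired with strong convergence of the coefficients, and the convergence has to be uniform in the averaging parameter $\lambda$ in order to survive the integration.

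Finally, to upgrade ``Eulerian semi-differentiable'' to ``shape differentiable'' I would substitute into $\partial_t G_y^X(0,u,p_y)$ the derivatives furnished by Lemma~\ref{lemma:phit}, namely $\dot\Phi_0=X$, $\partial_t A(0)=\divv(X)\,I-\partial X-\partial X^\top$, $\partial_t B(0)=-\partial X^\top$, $\partial_t\xi(0)=\divv X$ and $\partial_t f^t\big|_{t=0}=\divv(X)f+\nabla f\cdot X$. All of these are linear in $(X,\partial X)$, hence $X\mapsto\partial_t G_y^X(0,u,p_y)$ is a linear functional, and continuity with respect to $\|\cdot\|_{C^1}$ is immediate because $u,p_y,f$ and $\beta,\beta'$ are fixed and bounded. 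Thus $X\mapsto dj(\Omega^{\Gamma,y})(X)$ is linear and continuous, which concludes the proof.
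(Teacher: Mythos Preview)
Your proof is correct and follows essentially the same averaged-adjoint route as the paper: reduce $(j(\Omega^{\Gamma,y}_t)-j(\Omega^{\Gamma,y}))/t$ to $(G_y(t,u,p^t_y)-G_y(0,u,p^t_y))/t$ via \eqref{eq:J_pert_red} and the state equation, then pass to the limit by pairing the weak convergence $p^t_y\rightharpoonup p_y$ from Lemma~\ref{lem:averaged_weak} against strongly convergent coefficients. The only cosmetic differences are that you express the difference quotient as $\int_0^1\partial_t G_y(\lambda t,u,p^t_y)\,d\lambda$ before taking the limit, whereas the paper computes the limit of the difference quotient directly, and that the paper packages the limit passage into a separate lemma (Lemma~\ref{lem:averaged}) allowing a $t$-dependent base point $y_t\to y$, a generality not needed here but reused later for the Danskin-type result.
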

It is sufficient to prove the following lemma. 
\begin{lemma}\label{lem:averaged}
	Let $\Omega^\Gamma\in \Xi$ and $y\in \overbar \Omega$ be given 
	and assume $2<q<q_0$.
        For all functions $y_t=y(t):\R\rightarrow \R^2$ that are continuous from the 
        right in $t=0$, we have
	\ben\label{eq:lim_G_t}
      	\lim_{t\searrow 0} \frac{G_{y_t}(t,u^{t},p_{y_t}^t) - G_{y_t}(0,u,p_{y_t}^t)}{t} = \partial_t G_y(0,u,p_y), 
	\een
     where $(u,p_y)\in W^1_{\Gamma,q}(\Omega)\times  W^1_{\Gamma,q'}(\Omega)$ solves \eqref{eq:state} and \eqref{eq:adjoint}, respectively. 
\end{lemma}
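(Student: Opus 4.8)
The plan is to route the whole computation through the Lagrangian $G_{y}$ and to exploit the averaged adjoint identity to remove the term whose direct differentiation would require differentiating the control-to-state map $t\mapsto u^{t}$, so that only the explicit $t$-dependence of $G$ and the (weakly convergent) adjoint slot remain. First I would use that $u^{t}-u\in W^{1}_{\Gamma,q}(\Omega)$ and that $p^{t}_{y_{t}}$ solves the averaged adjoint equation \eqref{eq:averaged_adjoint} at $y_{t}$: identity \eqref{eq:identity_important} (with $y$ replaced by $y_{t}$) gives $G_{y_{t}}(t,u^{t},p^{t}_{y_{t}})=G_{y_{t}}(t,u,p^{t}_{y_{t}})$, so that the difference quotient in \eqref{eq:lim_G_t} equals
\benn
\frac{G_{y_{t}}(t,u,p^{t}_{y_{t}})-G_{y_{t}}(0,u,p^{t}_{y_{t}})}{t},
\eenn
in which the state slot no longer moves with $t$; the time dependence now sits only in $\Phi_{t},A(t),B(t),\xi(t),f^{t}$ and in $p^{t}_{y_{t}}$.

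Next I would apply the fundamental theorem of calculus in the time variable. For fixed $\bar y\in\overbar D$ and fixed $w\in W^{1}_{\Gamma,q'}(\Omega)$, the map $s\mapsto G_{\bar y}(s,u,w)$ is $C^{1}$ on a neighbourhood of $[0,\delta]$ — because $s\mapsto\Phi_{s}$, $s\mapsto\partial\Phi_{s}^{\pm1}$, $s\mapsto\det(\partial\Phi_{s})$ and $s\mapsto f^{s}$ are $C^{1}$ (here $f\in W^{1}_{q}(D)$ and Lemma~\ref{lemma:phit} are used) and $\Psi,\beta\in C^{1}$ — hence
\benn
\frac{G_{\bar y}(t,u,w)-G_{\bar y}(0,u,w)}{t}-\partial_{t}G_{\bar y}(0,u,w)=\int_{0}^{1}\bigl(\partial_{t}G_{\bar y}(rt,u,w)-\partial_{t}G_{\bar y}(0,u,w)\bigr)\,dr .
\eenn
Since $w\mapsto G_{\bar y}(t,u,w)$ is affine, $\partial_{t}G_{\bar y}(t,u,w)=c_{\bar y}(t)+\langle\ell(t),w\rangle$, where $c_{\bar y}(t):=\nabla_{x}\Psi(\Phi_{t}(\bar y),u(\bar y))\cdot X(\Phi_{t}(\bar y))$ and the functional $\ell(t)\in W^{-1}_{\Gamma,q}(\Omega)$ is \emph{independent of} $\bar y$, being assembled from the time-derivatives of $A(t),B(t),\xi(t)$, of $t\mapsto f^{t}$, and from $\beta,\beta'$ along $B(t)\nabla u$.

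I would then establish, using the bounds on $\beta$ and $\beta'$ coming from Assumption~\ref{assmp:trans} (notably that $s\mapsto\beta'(s)\,s$ is bounded, which tames the $\beta$-part of the time-derivative to linear growth in $\nabla u$), the continuity of the flow quantities, dominated convergence and H\"older's inequality with the pair $q,q'$, that $t\mapsto\ell(t)$ is continuous into $W^{-1}_{\Gamma,q}(\Omega)$ and $c_{(\cdot)}(\cdot)$ is continuous on $[0,\delta]\times\overbar D$; consequently there is a modulus $\omega$, independent of $\bar y$, with $\omega(t)\to0$ as $t\searrow0$ and
\benn
\Bigl|\tfrac{G_{\bar y}(t,u,w)-G_{\bar y}(0,u,w)}{t}-\partial_{t}G_{\bar y}(0,u,w)\Bigr|\le\omega(t)\bigl(1+\|w\|_{W^{1}_{q'}(\Omega)}\bigr)
\eenn
for all $\bar y\in\overbar D$ and $w\in W^{1}_{\Gamma,q'}(\Omega)$.

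Finally, from the proof of Lemma~\ref{lem:averaged_weak} we have the uniform bound $\|p^{t}_{y_{t}}\|_{W^{1}_{q'}(\Omega)}\le c$ and the weak convergence $p^{t}_{y_{t}}\rightharpoonup p_{y}$ in $W^{1}_{\Gamma,q'}(\Omega)$ as $t\searrow0$. Taking $\bar y=y_{t}$ and $w=p^{t}_{y_{t}}$ above gives
\benn
\frac{G_{y_{t}}(t,u,p^{t}_{y_{t}})-G_{y_{t}}(0,u,p^{t}_{y_{t}})}{t}=\partial_{t}G_{y_{t}}(0,u,p^{t}_{y_{t}})+o(1)=c_{y_{t}}(0)+\langle\ell(0),p^{t}_{y_{t}}\rangle+o(1);
\eenn
since $y_{t}\to y$ and $\nabla_{x}\Psi,u,X$ are continuous, $c_{y_{t}}(0)\to c_{y}(0)$, and since $\ell(0)\in W^{-1}_{\Gamma,q}(\Omega)=(W^{1}_{\Gamma,q'}(\Omega))^{*}$ the weak convergence gives $\langle\ell(0),p^{t}_{y_{t}}\rangle\to\langle\ell(0),p_{y}\rangle$; hence the right-hand side tends to $c_{y}(0)+\langle\ell(0),p_{y}\rangle=\partial_{t}G_{y}(0,u,p_{y})$, which is \eqref{eq:lim_G_t}. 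I expect the main obstacle to be the estimate for $\ell(t)$: controlling the time-derivative of the nonlinear $\beta$-term forces one to combine the bound on $s\mapsto\beta'(s)\,s$ with the a.e.\ convergence $|B(t)\nabla u|\to|\nabla u|$, dominated convergence and H\"older's inequality, and it is precisely here that $q>2$ and $f\in W^{1}_{q}(D)$ are needed — the latter so that $\partial_{t}f^{t}|_{t=0}=\divv(X)f+\nabla f\cdot X$ lies in $L_{q}(\Omega)\hookrightarrow W^{-1}_{\Gamma,q}(\Omega)$.
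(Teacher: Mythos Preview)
Your proof is correct and follows essentially the same route as the paper's: first use the averaged adjoint identity \eqref{eq:identity_important} to replace $u^{t}$ by $u$ in the difference quotient, then pass to the limit using the strong convergence of the time-derivative coefficients (from Lemma~\ref{lemma:phit}) paired against the weak convergence $p^{t}_{y_{t}}\rightharpoonup p_{y}$ in $W^{1}_{\Gamma,q'}(\Omega)$ (from Lemma~\ref{lem:averaged_weak}). Your packaging via the affine decomposition $\partial_{t}G_{\bar y}(t,u,w)=c_{\bar y}(t)+\langle\ell(t),w\rangle$ and the uniform modulus $\omega(t)$ is a bit more systematic than the paper's direct term-by-term limit computation in \eqref{eqaveraged_1}--\eqref{eqaveraged_2}, and in fact makes the role of the \emph{weak} convergence of the adjoint more transparent (the paper's text slips and writes ``$p_{y_t}^t \rightarrow p_y$'', though only weak convergence is available and is all that is used).
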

\begin{proof}
	By definition of the function $p_{y_t}^t$, $t>0 $, we get (cf. \eqref{eq:identity_important})
    \ben
    \frac{G_{y_t}(t,u^{t},p_{y_t}^t) - G_{y_t}(0,u,p_{y_t})}{t} = \frac{G_{y_t}(t,u,p_{y_t}^t)-G_{y_t}(0,u,p_{y_t}^t)}{t}. 
    \een
We want to pass to the limit on the right hand side. To do so notice
\ben
\begin{split}
	\frac{\Psi(\Phi_t(y_t),u(y_t)) - \Psi(y_t,u(y_t))}{t} = \int_0^1 
	\nabla \Psi(s\Phi_t(y_t)+(1-s)y_t,u(y_t))\; ds \frac{\Phi_t(y_t)-y_t}{t} 
\end{split}
\een
and consequently
\ben\label{eqaveraged_1}
\begin{split}
	\left|\frac{\Psi(\Phi_t(y_t),u(y_t)) - \Psi(y_t,u(y_t))}{t} - \nabla_y \Psi(y,u(y)) \right| \le  c 
	\underbrace{\left\|\frac{\Phi_t-\text{id}}{t} - X\right\|_{C(\overbar 
			D,\R^{2,2})}}_{\rightarrow 0, \text{ in view of Lemma~\ref{lemma:phit}}} .
\end{split}
\een
By Lemma~\ref{lem:averaged_weak}, we obtain $p_{y_t}^t \rightarrow  p_y$ in  $W^1_{\Gamma,q'}(\Omega)$ for 
$q'=q/(q-1)$. Thus Lemma~\ref{lemma:phit} implies
\ben\label{eqaveraged_2}
\begin{split}
	& \int_{\Omega}\frac{\beta(|B(t)\nabla u|^2)A(t) -\beta(|\nabla u|^2)}{t}  \nabla p_{y_t}^t \cdot \nabla u + \frac{\xi(t)-1}{t} 
p_{y_t}^t u \; dx - \int_{\Omega} \frac{f^t-f}{t} p_{y_t}^t\; dx \\
 \rightarrow & \int_{\Omega} \beta(|\nabla u|^2) A'(0)\nabla p_y \cdot 
\nabla u 
+ 2\beta'(|\nabla u|^2)B'(0)\nabla u\cdot \nabla u \nabla p_y \cdot \nabla u \; dx+	\int_{\Omega} \divv(X) p_yu - 
  f' p_y\; dx    
\end{split}
\een
as $t\searrow 0$. 
Now \eqref{eqaveraged_1} and \eqref{eqaveraged_2} together imply 
\eqref{eq:lim_G_t} and thus our claim. 
\end{proof}
\begin{proof}[Proof of Theorem~\ref{thm:shape_derivative_j}]
According to \eqref{eq:J_pert_red} we have 
$j(\Omega_t^{\Gamma,y}) = G_y(t,u,p_{y}^t)$ for all $t$ and all $y\in \overbar \Omega$. 
     So an application of Lemma~\ref{lem:averaged} with $y(t)\equiv y$, yields
     $
     dj(\Omega^{\Gamma,y})(X) = \dt G_y(t,u^t,p_y)|_{t=0} = \partial_t G_y(0,u,p_y).
     $
\end{proof}
Now we can present explicit formulas for the shape derivative of 
$j(\cdot)$. 
\begin{corollary}\label{cor:tensor_form_j}
    \begin{itemize}
		    Let $\Omega^\Gamma\in \Xi$ and assume $2<q <q_0$.  
	  \item[(a)]  
  The shape derivative of $j(\cdot)$  at $ \Omega^{\Gamma,y}
    $, $y\in \overbar \Omega$, in direction $X\in \ac C^1(D,\R^2)$ is given by
    \ben\label{eq:shape_tensor_j}
    dj(\Omega^{\Gamma,y})(X) = \int_{\Omega} \Sb_1(u,p_y):\partial X + 
    \Sb_0(u,p_y)\cdot X \;dx  +  X(y) \cdot \nabla_y \Psi(y,u(y)),	
    \een
    where
    \ben\label{eq:formula_S1}
    \begin{split}
    \Sb_1(u,p_y) :=&  (\beta(|\nabla u|^2)\nabla u\cdot \nabla p_y + up_y- fp_y)  I - \beta(|\nabla u|^2) (\nabla u\otimes \nabla p_y + \nabla p_y \otimes \nabla u)\\
                       & - 2\beta'(|\nabla u|^2)(\nabla u\cdot \nabla p_y) \nabla u\otimes \nabla u 
    \end{split}
    \een
    \ben\label{eq:formula_S0}
    \Sb_0(u,p_y) := - \nabla f p_y. 	
    \een
Here, $(u,p_y)\in W^1_{\Gamma,q}(\Omega)\times  W^1_{\Gamma,q'}(\Omega)$ solve \eqref{eq:state} and \eqref{eq:adjoint}, respectively.  
    \item[(b)]
    Assume $\partial \Omega\in C^1$, $u\in H^2(\Omega)$, $f\in H^2(\Omega)$ and 
    $p_y\in H^2(\Omega\setminus \{y\})$ for 
    $y\in \Omega$ and 
    $p_y\in H^2(\Omega)$ for $y\in \partial \Omega$. Then for every 
    $y\in \Omega$,   
    \ben\label{eq:divv_S1_S02}
    -\divv(\Sb_1(u,p_y)) + \Sb_0(u,p_y) =0 \quad \text{ a.e. in } \Omega \setminus \{y\}
    \een
    and for every $y\in \partial \Omega$,
    \ben\label{eq:divv_S1_S02-2}
    -\divv(\Sb_1(u,p_y)) + \Sb_0(u,p_y) =0 \quad \text{ a.e. in } \Omega. 
    \een
    Moreover, for all $y\in \Omega$,
    \ben\label{eq:shape_tensor_j_boundary}
    dj(\Omega^{\Gamma,y})(X) = \int_{\partial \Omega} \Sb_1(u,p_y)\nu\cdot 
    \nu (X\cdot \nu)  \;ds  + (\Sb_1(u,p_y)\nu \otimes \delta_y) X  +  X(y) \cdot \nabla_y \Psi(y,u(y)),	
    \een
    where
    $
    (\Sb_1(u,p_y)\nu \otimes \delta_y) X := \lim_{\delta \searrow 0} \int_{\partial B_\delta(y)} \Sb_1(u,p_y)\nu \cdot X \; ds
    $
    and for all $y\in \partial \Omega$,
    \ben\label{eq:shape_tensor_j_boundary-2}
    dj(\Omega^{\Gamma,y})(X) = \int_{\partial \Omega} \Sb_1(u,p_y)\nu\cdot 
    \nu (X\cdot \nu)  \;ds + X(y) \cdot \nabla_y \Psi(y,u(y)). 	
     \een
Here $B_\delta(y)$ denotes the ball centered at $y$ with radius $\delta$.  
        \end{itemize}
\end{corollary}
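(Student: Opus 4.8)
The overall plan is that part (a) follows from Theorem~\ref{thm:shape_derivative_j} by an explicit, purely algebraic computation of $\partial_t G_y^X(0,u,p_y)$, while part (b) is deduced from part (a) by exploiting the locality of the Eulerian semi-derivative together with integration by parts; the genuinely delicate point is the behaviour of the adjoint $p_y$ at the interior point $y$.

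\emph{Part (a).} By Theorem~\ref{thm:shape_derivative_j} we have $dj(\Omega^{\Gamma,y})(X)=\partial_t G_y^X(0,u,p_y)$, so it suffices to differentiate the averaged Lagrangian in $t$ at $t=0$ with $u$ and $p_y$ held fixed. Using Lemma~\ref{lemma:phit} one obtains $\xi'(0)=\divv X$, $B'(0)=-(\partial X)^\top$, $A'(0)=\divv(X)I-\partial X-(\partial X)^\top$, and $\partial_t f^t|_{t=0}=\divv(X)f+\nabla f\cdot X$, while the term $\Psi(\Phi_t(y),u(y))$ contributes $\nabla_y\Psi(y,u(y))\cdot X(y)$. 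Differentiating the volume integrand by the chain and product rules produces exactly the expression already appearing in \eqref{eqaveraged_2}, that is, $-2\beta'(|\nabla u|^2)(B'(0)\nabla u\cdot\nabla u)(\nabla u\cdot\nabla p_y)+\beta(|\nabla u|^2)A'(0)\nabla u\cdot\nabla p_y+\divv(X)(up_y-fp_y)-(\nabla f\cdot X)p_y$. I then recast each bilinear form through the elementary identities $\divv X=I:\partial X$, $(\partial X\,a)\cdot b=\partial X:(b\otimes a)$ and $((\partial X)^\top a)\cdot b=\partial X:(a\otimes b)$; collecting the coefficient of $\partial X$ gives precisely $\Sb_1(u,p_y)$ as in \eqref{eq:formula_S1} and the coefficient of $X$ gives $-\nabla f\,p_y=\Sb_0(u,p_y)$, which is \eqref{eq:shape_tensor_j}.

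\emph{Part (b), the divergence identities.} For \eqref{eq:divv_S1_S02}–\eqref{eq:divv_S1_S02-2} I argue by locality. If $X\in C_c^\infty(\Omega\setminus\{y\},\R^2)$ (extended by zero to $D$, so $X\in\ac C^1(D,\R^2)$), then $\Phi_t$ is the identity outside $\supp X\subset\Omega$; in particular $\Phi_t$ fixes $\partial\Omega$ pointwise and fixes $y$, and being a homeomorphism of $D$ equal to the identity on $\partial\Omega$ it maps $\Omega$ onto $\Omega$ and $\Gamma$ onto $\Gamma$. Hence $\Omega_t^{\Gamma,y}=\Omega^{\Gamma,y}$ and $dj(\Omega^{\Gamma,y})(X)=0$. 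Substituting into \eqref{eq:shape_tensor_j}, and noting that the $X(y)$-term drops because $X$ vanishes near $y$, yields $\int_\Omega\Sb_1:\partial X+\Sb_0\cdot X\,dx=0$. Under the hypotheses $u,f\in H^2(\Omega)$ and $p_y\in H^2(\Omega\setminus\{y\})$, the tensor $\Sb_1$ is locally Sobolev-regular away from $y$ (the products of gradients in \eqref{eq:formula_S1} are controlled by the two-dimensional embedding $H^1(\Omega)\hookrightarrow L_r(\Omega)$ for all $r<\infty$ together with Assumption~\ref{assmp:trans}), so integrating the first term by parts and applying the fundamental lemma of the calculus of variations gives $-\divv\Sb_1+\Sb_0=0$ a.e.\ on $\Omega\setminus\{y\}$. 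When $y\in\partial\Omega$, every $X\in C_c^\infty(\Omega,\R^2)$ already has $y\notin\supp X$, so the same argument gives $-\divv\Sb_1+\Sb_0=0$ a.e.\ on all of $\Omega$; these are \eqref{eq:divv_S1_S02} and \eqref{eq:divv_S1_S02-2}.

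\emph{Part (b), the boundary representations.} To reach \eqref{eq:shape_tensor_j_boundary}–\eqref{eq:shape_tensor_j_boundary-2} I integrate \eqref{eq:shape_tensor_j} by parts: directly over $\Omega$ when $y\in\partial\Omega$, and over $\Omega\setminus B_\delta(y)$ followed by $\delta\searrow0$ when $y\in\Omega$. By the divergence identities the interior remainder $-\int\divv(\Sb_1)\cdot X$ cancels $\int\Sb_0\cdot X$, leaving $\int_{\partial\Omega}(\Sb_1\nu)\cdot X\,ds$ and, in the interior case, the extra limit $\lim_{\delta\searrow0}\int_{\partial B_\delta(y)}(\Sb_1\nu)\cdot X\,ds$, which is by definition the symbol $(\Sb_1(u,p_y)\nu\otimes\delta_y)X$. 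Finally I simplify $\Sb_1\nu$ on $\partial\Omega$ using the boundary conditions: on the Neumann part $\Gamma$ one has $\partial_\nu u=\partial_\nu p_y=0$, so $\nabla u$ and $\nabla p_y$ are tangential; on the Dirichlet part $\Gamma_0$ one has $u=p_y=0$, whence $up_y=fp_y=0$ and the tangential derivatives vanish, so $\nabla u$ and $\nabla p_y$ are normal. In either case a direct check shows that each of the three tensors in \eqref{eq:formula_S1} sends $\nu$ to a multiple of $\nu$, hence $(\Sb_1\nu)\cdot X=(\Sb_1\nu\cdot\nu)(X\cdot\nu)$ on $\partial\Omega$; together with the $X(y)\cdot\nabla_y\Psi$ term from part (a) this yields \eqref{eq:shape_tensor_j_boundary} and \eqref{eq:shape_tensor_j_boundary-2}.

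\emph{Main obstacle.} The hard part is the behaviour at the interior point $y$: one must verify that $\Sb_1:\partial X$ and $\Sb_0\cdot X$ are integrable up to $y$ (so that the limit of the volume integral over $\Omega\setminus B_\delta(y)$ recovers the full integral over $\Omega$) and that the circle integral $\int_{\partial B_\delta(y)}(\Sb_1\nu)\cdot X\,ds$ really converges rather than being merely a formal symbol. This is borderline and hinges on the fact that near $y$ the adjoint $p_y$ behaves like a fundamental solution, so $\nabla p_y=O(|x-y|^{-1})$ and $p_y=O(\log|x-y|)$ in two dimensions: just enough for $\Sb_0,\Sb_1\in L_1$ near $y$ and for the integral over $\partial B_\delta(y)$, whose length is of order $\delta$, to admit a finite limit. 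A secondary technical issue is the trace and integration-by-parts calculus on the Lipschitz domain with mixed boundary conditions, where one needs $\nabla u$ and $\nabla p_y$ to possess $L_2$ traces on $\partial\Omega$ and the tangential/normal splitting to be licit on the curved Dirichlet pieces of $\Gamma_0$ — which is precisely what the hypotheses $\partial\Omega\in C^1$, $u,f\in H^2(\Omega)$ and $p_y\in H^2(\Omega\setminus\{y\})$ (resp.\ $p_y\in H^2(\Omega)$) supply.
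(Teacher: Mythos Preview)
Your argument for part (a) and for the divergence identities in (b) matches the paper essentially line for line. Two differences are worth noting. First, for the reduction $\int_{\partial\Omega}\Sb_1\nu\cdot X\,ds=\int_{\partial\Omega}(\Sb_1\nu\cdot\nu)(X\cdot\nu)\,ds$ you compute directly, using the Dirichlet and Neumann boundary data for $u$ and $p_y$, that $\Sb_1\nu$ is a scalar multiple of $\nu$ on $\partial\Omega$; the paper instead argues structurally, applying Nagumo's theorem to a tangential field $X_\tau=X-(X\cdot\tilde\nu)\tilde\nu$ (with $\tilde\nu$ a cut-off extension of $\nu$ supported away from $y$) to deduce $dj(\Omega^{\Gamma,y})(X_\tau)=0$ and hence that the boundary term depends only on $X\cdot\nu$. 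Your route is more concrete and sidesteps a delicate point about whether the flow of $X_\tau$ really preserves the full triple $(\Omega,\Gamma,y)$, at the cost of using the explicit form of $\Sb_1$ and the mixed boundary conditions. Second, your ``main obstacle'' is overstated: the paper does not need any asymptotics of $p_y$ near $y$. Integrability of $\Sb_0$ and $\Sb_1$ over all of $\Omega$ is already secured by part~(a) (they are products of $L_q$ and $L_{q'}$ functions), so $\int_{B_\delta(y)}(\Sb_1:\partial X+\Sb_0\cdot X)\,dx\to 0$ follows from H\"older's inequality and $|B_\delta(y)|\to 0$; and the existence of the limit $(\Sb_1\nu\otimes\delta_y)X$ is obtained not by estimating the circle integral directly, but simply by isolating it in the integration-by-parts identity and observing that every other term has a well-defined limit.
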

\begin{proof}
    At first by Theorem~\ref{thm:shape_derivative_j},  
    $j(\Omega^{\Gamma,y})(X) = \partial_t G_y^X(0,u,p_y)$ and
    \ben\label{eq:partial_G_y}
    \begin{split}
    \partial_t G_y^X(0,u,p_y)  = & \int_{\Omega} \beta(|\nabla u|^2) A'(0)\nabla p_y \cdot 
\nabla u 
+ 2\beta'(|\nabla u|^2)B'(0)\nabla u\cdot \nabla u \nabla p_y \cdot \nabla u \; dx\\
       &+	\int_{\Omega} \divv(X) p_yu - 
  f' p_y\; dx - \int_{\Omega} f' p_y 
    \; dx +  X(y) \cdot \nabla_y \Psi(y,u(y)),  
    \end{split}
    \een
    for all $X\in \ac C^1(D,\R^2)$,
where according to Lemma~\ref{lemma:phit},
    $  A'(0) = \divv(X)I - \partial X - \partial X^\top $
    and $ f' := \divv(X)f + \nabla f \cdot X$. Therefore
    it is readily verified that \eqref{eq:partial_G_y} can be brought into the tensor form 
    \eqref{eq:shape_tensor_j}.
    
    Let us now prove that \eqref{eq:shape_tensor_j} is in fact equivalent to 
    \eqref{eq:shape_tensor_j_boundary} when $u\in H^2(\Omega)$, 
    $f\in H^2(\Omega)$ and $p_y\in H^2(\Omega)$ for
    $y\in\partial \Omega$ and $p_y\in H^2(\Omega\setminus \{y\})$ for $y\in \Omega$. 
    Let $y\in \Omega$ be given and choose $\delta>0$ such that $\overline{B_\delta(y)}\subset \Omega$ and 
    define the Lipschitz domain $\Omega_\delta := \Omega \setminus \overbar{B_\delta(y)}$. 
    By Nagumo's theorem  it follows that for all $\delta> 0$ so that 
    $\overbar{B_\delta(y)}\subset \Omega$, $dj(\Omega^{\Gamma,y})(X)=0$ for all 
    $X\in C^1_c(\Omega_\delta, \R^2)$.  But according to 
    \eqref{eq:shape_tensor_j} this is equivalent to
    \ben\label{eq:B_delta}
    \int_{\Omega} \Sb_1(u,p_y):\partial X + \Sb_0(u,p_y)\cdot X \;dx = 0 \quad \forall X\in C^1_c(\Omega_\delta,\R^2).
    \een
    Now partial integration and the fundamental theorem of the calculus of variations 
    yield for all small $\delta >0$
    $    -\divv(\Sb_1(u,p_y)) + \Sb_0(u,p_y) =0$ a.e. on $\Omega_\delta
    $
    and hence
    \ben\label{eq:divv_S1_S0}
    -\divv(\Sb_1(u,p_y)) + \Sb_0(u,p_y) =0 \quad \text{ a.e. on } 
    \Omega\setminus \{y\}.
    \een

    Setting $\Sb_1 := \Sb_1(u,p_y)$ and $\Sb_0 := \Sb_0(u,p_y)$,
    we obtain
    \ben\label{eq:dj_Omega}
    \begin{split}	
        dj(\Omega^{\Gamma,y})(X)  = & \int_{B_\delta(y)}\Sb_1:\partial X + \Sb_0 \cdot 
	X\; dx + \int_{\Omega_\delta} (\underbrace{-\divv(\Sb_1) + \Sb_0}_{=0, 
		\eqref{eq:divv_S1_S0}})\cdot X\; dx + \int_{\partial 
            \Omega} \Sb_1\nu \cdot X\; ds \\
    &+ X(y) \cdot \nabla_y \Psi(y,u(y)) + \int_{\partial 
	    B_\delta(y)} \Sb_1\nu \cdot X\; ds\quad \text{ for }X\in C^1_c(D,\R^2).
    \end{split}
    \een 
    Further H\"older's inequality shows 
     \begin{align*} 
	     \left|\int_{B_\delta(y)}\Sb_1:\partial X + \Sb_0 \cdot X\; dx\right| \le 
    |B_\delta(y)|^{1/{q'}}\|X\|_{C^1}(\|\Sb_1\|_{L_q(\Omega,\R^{2,2})} + \|\Sb_0\|_{L_q(\Omega,\R^2)}) 
    \end{align*}	
    and the right hand side goes to zero as $\delta \searrow 0$.
    Consequently
    \ben\label{eq:lim_delta_Sb_1}
    \lim_{\delta \searrow 0}\int_{\partial 
        B_\delta(y)} \Sb_1\nu \cdot X\; ds  = dj(\Omega^{\Gamma,y})(X)-\int_{\partial 
        \Omega} \Sb_1\nu \cdot X\; ds + X(y) \cdot \nabla_y \Psi(y,u(y))   
    \een
for all $X\in \ac C^1(D,\R^2)$. Observe
    $
    X\mapsto  (\Sb_1(u,p_y)\nu \otimes \delta_y) X :=   \lim_{\delta \searrow 0}\int_{\partial 
        B_\delta(y)} \Sb_1\nu \cdot X\; ds$ is linear and continuous as a mapping 
    $\ac C^1(D,\R^2) \rightarrow \R$.
    Define $X_\tau := X-(X\cdot \tilde \nu)\tilde \nu$,  where 
    $\tilde \nu$ is a smooth extension of $\nu$ such that $\supp\tilde \nu\subset D \setminus\overbar B_\delta(y)$ and $X\in C^1_c(D,\R^2)$. Then $dj(\Omega^{\Gamma,y})(X_\tau)=0$ which is equivalent to
    \ben\label{eq:Sb_1_boundary}
    \int_{\partial 
        \Omega} \Sb_1\nu\cdot X\; ds = \int_{\partial 
	\Omega} \Sb_1\nu\cdot \nu (X\cdot \nu)\; ds \quad \text{ for all } X\in C^1_c(D,\R^2) . 
\een
So inserting \eqref{eq:Sb_1_boundary} into \eqref{eq:dj_Omega}, we recover  \eqref{eq:shape_tensor_j_boundary}.

Now let $y\in \partial \Omega$. Then $dj(\Omega^{\Gamma,y})(X)=0$ for all 
    $X\in C^1_c(\Omega, \R^2)$ and this is equivalent to
    $
    \int_{\Omega} \Sb_1(u,p_y):\partial X + \Sb_0(u,p_y)\cdot X \;dx = 0     
    $ for all $X\in C^1_c(\Omega,\R^2)$,  
   from which we conclude by partial integration and the fundamental theorem of calculus 
   of variations,
   $
   -\divv(\Sb_1(u,p_y)) + \Sb_0(u,p_y) =0$ 
   a.e. on $\Omega.$
   Hence integrating by parts we obtain 
   \ben
    \begin{split}	
        dj(\Omega^{\Gamma,y})(X)  = \int_{\partial \Omega} \Sb_1\nu \cdot X\; ds + X(y) \cdot 
       \nabla_y \Psi(y,u(y)) \quad \text{ for }X\in \ac C^1(D,\R^2)
    \end{split}
    \een 
 and since also in this case \eqref{eq:Sb_1_boundary} is valid we get \eqref{eq:shape_tensor_j_boundary-2}.
\end{proof}

\begin{remark}
	Notice that if we strengthen the assumption in item (b) of the previous theorem
	and assume for all $y\in \Omega$, $p_y\in H^2(\Omega)$, then it 
	follows from \eqref{eq:lim_delta_Sb_1}  by partial integration
$(\Sb_1(u,p_y)\nu \otimes \delta_y) X = 0.$
\end{remark}

\subsection{Eulerian semi-derivative of $J_\infty(\cdot)$}
The following theorem is a Danksin type theorem and follows essentially from 
the proof of \cite[Theorem 2.1, p.524]{MR2731611}. Since our setting is different
from the one in the book we give a proof. 
\begin{lemma}\label{lem:danskin}
Let $K\subset \R^d$ be a compact set, $\tau>0$ a positive number and 
$g:[0,\tau]\times K\rightarrow \R$ some function. Define for 
$t\in [0,\tau]$ the set
$
R^t = \{z\in K:\; \max_{x\in K} g(t,x) = g(t,z) \}
$
with the convention $R:=R^0$. 
 Assume that
\begin{itemize}
	\item[(A1)] for all $ x\in R$, the partial derivative $\partial_t 
		g(0^+,x)$ exists, 
	\item[(A2)] for all $t\in[0,\tau]$, the function $x\mapsto g(t,x)$ is upper semi-continuous,
	\item[(A3)] for all real nullsequences $(t_n)$, $t_n\searrow 0$, and 
		all sequences $(y_{t_n})$, $y_{t_n}\in R^{t_n}$ converging 
		to some $y\in R$, we have
	       \ben
	       \lim_{n\rightarrow \infty} \frac{g(t_n,y_{t_n}) - 
		       g(0,y_{t_n})}{t_n} = \partial_t g(0^+,y). 
	       \een	       
\end{itemize}
Then
\ben
\frac{d}{dt}\left( \max_{x\in K} g(t,x) \right)_{t=0} = \max_{x\in R} \partial_t g(0^+,x).	
\een 
\end{lemma}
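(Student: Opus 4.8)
The plan is to carry out the standard Danskin argument, splitting the one-sided derivative of $h(t):=\max_{x\in K}g(t,x)$ at $t=0$ into matching lower and upper estimates. I would first record that, by (A2) together with compactness of $K$, the maximum defining $h(t)$ is attained for every $t$, so $h$ is well defined on $[0,\tau]$ and each $R^t$ is nonempty; in particular $R=R^0\neq\emptyset$. Set $\mu:=\sup_{x\in R}\partial_t g(0^+,x)$, which is meaningful by (A1). The goal is then to prove that $\lim_{t\searrow 0}\frac{h(t)-h(0)}{t}$ exists, equals $\mu$, and that this supremum is attained, so that it may be written as $\max_{x\in R}\partial_t g(0^+,x)$.

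For the lower estimate I would fix an arbitrary $x\in R$; then $h(0)=g(0,x)$ while $h(t)\ge g(t,x)$ for every $t\in[0,\tau]$, so that $\frac{h(t)-h(0)}{t}\ge\frac{g(t,x)-g(0,x)}{t}$ for $t>0$. Letting $t\searrow 0$ and using (A1) gives $\liminf_{t\searrow 0}\frac{h(t)-h(0)}{t}\ge\partial_t g(0^+,x)$, and taking the supremum over $x\in R$ yields $\liminf_{t\searrow 0}\frac{h(t)-h(0)}{t}\ge\mu$.

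For the upper estimate I would pick a sequence $t_n\searrow 0$ along which $\frac{h(t_n)-h(0)}{t_n}$ converges to $L:=\limsup_{t\searrow 0}\frac{h(t)-h(0)}{t}$, and for each $n$ choose $y_{t_n}\in R^{t_n}$, so that $h(t_n)=g(t_n,y_{t_n})$; by compactness of $K$, pass to a subsequence with $y_{t_n}\to y\in K$. The decisive point is to show $y\in R$: fixing any $x^*\in R$, (A1) gives $g(t_n,x^*)=g(0,x^*)+t_n\,\partial_t g(0^+,x^*)+o(t_n)\to h(0)$, and since $h(t_n)=g(t_n,y_{t_n})\ge g(t_n,x^*)$ this yields $\liminf_n h(t_n)\ge h(0)$; combining this with the continuity of $(t,x)\mapsto g(t,x)$ — which in the intended application of the lemma is supplied by Corollary~\ref{cor:sensi_u_ut}, giving $g(t_n,y_{t_n})\to g(0,y)$ (in the purely abstract setting one instead combines (A2) with a one-sided upper semicontinuity of $t\mapsto h(t)$ at $0$) — one concludes $g(0,y)=\lim_n h(t_n)\ge h(0)$, i.e.\ $y\in R$. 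Once this is known, $g(0,y_{t_n})\le h(0)$ gives $\frac{h(t_n)-h(0)}{t_n}\le\frac{g(t_n,y_{t_n})-g(0,y_{t_n})}{t_n}$, and since $y_{t_n}\to y\in R$ hypothesis (A3) applies and forces the right-hand side to converge to $\partial_t g(0^+,y)$, whence $L\le\partial_t g(0^+,y)\le\mu$.

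Combining the two estimates gives $\mu\le\liminf_{t\searrow 0}\frac{h(t)-h(0)}{t}\le\limsup_{t\searrow 0}\frac{h(t)-h(0)}{t}=L\le\partial_t g(0^+,y)\le\mu$, so every inequality is an equality: the one-sided derivative $\frac{d}{dt}\big(\max_{x\in K}g(t,x)\big)_{t=0}$ exists, equals $\mu$, and is attained at $y\in R$, so $\mu=\max_{x\in R}\partial_t g(0^+,x)$. I expect the main obstacle to be exactly the claim $y\in R$, namely that a cluster point of near-maximisers $y_{t_n}$ of $g(t_n,\cdot)$ is a maximiser of $g(0,\cdot)$; handling it cleanly requires both (A1), to pin down $\liminf_n h(t_n)\ge h(0)$ from a fixed reference maximiser, and an upper-semicontinuity control of $g$ jointly in $(t,x)$, and this is also the structural reason (A3) is stated only conditionally on $y_{t_n}\to y\in R$. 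The remaining steps are routine comparison arguments.
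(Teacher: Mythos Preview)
Your argument follows the same Danskin sandwich as the paper: a lower bound on $\frac{h(t)-h(0)}{t}$ from any fixed $x\in R$ via (A1), an upper bound by selecting $y_{t_n}\in R^{t_n}$, extracting a convergent subsequence $y_{t_n}\to y$, establishing $y\in R$, and then invoking (A3). The lower estimate, the use of (A3) once $y\in R$ is known, and the attainment conclusion are all handled exactly as in the paper.

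The one genuine gap is the step ``$y\in R$''. You obtain it by asserting $g(t_n,y_{t_n})\to g(0,y)$ via joint continuity of $(t,x)\mapsto g(t,x)$, which is \emph{not} among the hypotheses (A1)--(A3); borrowing it from Corollary~\ref{cor:sensi_u_ut} makes the proof of the abstract lemma circularly dependent on the application. Your parenthetical alternative (combining (A2) with one-sided upper semicontinuity of $t\mapsto h(t)$ at $0$) is still an extra hypothesis and, more to the point, does not by itself bridge $g(t_n,y_{t_n})$ and $g(0,y)$: knowing $h(t_n)\to h(0)$ and $\limsup_n g(0,y_{t_n})\le g(0,y)$ gives no lower bound on $g(0,y)$ in terms of $h(0)$. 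The paper avoids any joint-continuity assumption by the algebraic decomposition
\[
g(t_n,x)\ \le\ g(t_n,y_{t_n})\ =\ g(0,y_{t_n})\;+\;t_n\,\frac{g(t_n,y_{t_n})-g(0,y_{t_n})}{t_n},
\]
and then passes to the limit term by term: the left side tends to $g(0,x)$ by (A1) (continuity of $t\mapsto g(t,x)$ at $0$ for $x$ with existing one-sided derivative), the first right-hand term is controlled by (A2) alone giving $\limsup_n g(0,y_{t_n})\le g(0,y)$, and the remaining term is $t_n$ times the difference quotient. This splitting into a part governed by upper semicontinuity in $x$ and a part carrying the factor $t_n$ is precisely the device that lets the paper stay within (A1)--(A3), and it is the idea missing from your argument.
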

\begin{proof}
	Due to assumption (A2) and the compactness of $K$ the set $R^t$ is nonempty for all 
	$t\in [0,\tau]$. Furthermore, by definition, for all $t\ge 0$, $y_t\in 
	R^t$, and $y\in R$ we have
        $g(t,y_t)  \ge g(t,y)$ and $g(0,y)  \ge g(0,y_t).$
        Using these two inequalities we obtain 
        $
		g(t,y_t) - g(0,y)  \ge g(t,y) - g(0,y)
        $
        and also 
        $
		g(t,y_t)- g(0,y)  \le  g(t,y_t) - g(0,y_t)
        $ and consequently
	\ben\label{eq:ineq_chain}
	 g(t,y) - g(0,y)  \le  g(t,y_t) - g(0,y) \le g(t,y_t) - g(0,y_t).
	\een
        Setting $\delta(t):=(g(t,y_t) - g(0,y) )/t $ it is 
        sufficient to show that 
        $\liminf_{t\searrow 0}\delta(t)=\limsup_{t\searrow 0}\delta(t)$ and 
	one of the limits is finite. 
	By assumption (A1) and \eqref{eq:ineq_chain}, we obtain the chain of inequalities
	$
	\partial_t g(0^+,y) \le  \liminf_{t\searrow 0} \delta(t) \le \limsup_{t\searrow 0} \delta(t)
	$ 
for all $y\in R$. 
        Since the previous inequality is true for all $y\in R$ it implies
        \ben\label{eq:le_max}
         \max_{y\in R}\partial_t g(0^+,y) \le  \liminf_{t\searrow 0} \delta(t) \le \limsup_{t\searrow 0} \delta(t).
         \een
         Now $K$ is compact and $y_t\in K$ for all 
	 $t\ge 0$, so we find for each nullsequence $(t_n)$ a subsequence, still indexed the 
         same, and $y\in K$, such that $y_{t_n}\rightarrow y$ as 
	 $n\rightarrow \infty$. We need to show that $y\in R$. In fact it follows for all $x\in K$,
     $
     g(t_n,x) \le g(t_n,y_{t_n}) = g(0,y_{t_n}) +  t_n \frac{g(t_n,y_{t_n})-g(0,y_{t_n})}{t_n}
     $
     and thus using Assumption (A2) we get for all $x\in K$,
     \ben
     g(0,x) = \limsup_{n\rightarrow \infty} g(t_n,x) \le  \limsup_{n\rightarrow \infty} g(0,y_{t_n}) +  \limsup_{n\rightarrow \infty} t_n \frac{g(t_n,y_{t_n})-g(0,y_{t_n})}{t_n} \le g(0,y).
     \een
     This shows that $y$ is a maximum of $g(0,\cdot)$, that is, $y\in R$.
     We deduce from \eqref{eq:ineq_chain} and Assumption (A3),
             $
	     \limsup_{t\searrow 0}\delta(t) \le \lim_{n\rightarrow \infty}\frac{g(t_n,y_{t_n}) - g(0,y_{t_n})}{t_n} = \partial_t g(0^+,y) 
	 $
         and hence
        $
        \limsup_{t\searrow 0}\delta(t) \le \partial_t g(0^+,y) \le \max_{y\in R}\partial_t g(0^+,y).   
        $
        Finally combining the previous inequality with \eqref{eq:le_max} yields,\newline
	$
       	\max_{y\in R} \partial_t g(0^+,y) \le  \liminf_{t\searrow 0} 
	\delta(t) \le \limsup_{t\searrow 0} \delta(t) \le  \max_{y\in R} \partial_t g(0^+,y)
	$
         and thus the desired result.
    \end{proof}

Let us define the set
\ben\label{eq:R}
R(\Omega^\Gamma) := \left\{x\in \overbar \Omega: J_\infty(\Omega^\Gamma)  = \Psi(x,u(x))   \right\}.
\een
We can now prove the following main result. 
\begin{theorem}\label{thm:main_eulerian_semi}
	Let $\Omega^\Gamma\in \Xi$ be given and suppose $q\in (2,q_0]$. Then the Eulerian semi-derivative of the 
    shape function
    $J_\infty$ given by \eqref{eq:cost_max} at $\Omega^\Gamma$ in direction 
    $X\in \ac C^1(D,\R^2)$ is given by 
    \ben
    dJ_\infty(\Omega^\Gamma)(X) = \max_{y\in R(\Omega^\Gamma)} \partial_t G_{y}^X(0,u,p_y),   
    \een
where $(u,p_y)\in W^1_{\Gamma,q}(\Omega)\times  W^1_{\Gamma,q'}(\Omega)$ solve \eqref{eq:state} and \eqref{eq:adjoint}, respectively.  
\end{theorem}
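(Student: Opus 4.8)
The plan is to reduce the assertion to the abstract Danskin-type Lemma~\ref{lem:danskin} applied to the function
\[
g(t,y):=j(\Omega_t^{\Gamma,y})=\Psi(\Phi_t(y),u^t(y)),\qquad (t,y)\in[0,\tau]\times\overbar\Omega,
\]
with compact set $K=\overbar\Omega$. After the change of variables $x=\Phi_t(y)$ (note $\Phi_t:\overbar\Omega\to\overbar{\Omega_t}$ is a homeomorphism), the representation \eqref{eq:J_pert_red} gives $J_\infty(\Omega_t^\Gamma)=\max_{y\in\overbar\Omega}g(t,y)$, and the set $R^0$ of maximisers of $g(0,\cdot)$ is exactly $R(\Omega^\Gamma)$. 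Hence, once Lemma~\ref{lem:danskin} is applicable, we conclude $dJ_\infty(\Omega^\Gamma)(X)=\frac{d}{dt}\big(\max_{y\in\overbar\Omega}g(t,y)\big)_{t=0}=\max_{y\in R(\Omega^\Gamma)}\partial_t g(0^+,y)$, and it only remains to identify $\partial_t g(0^+,y)$ with $\partial_t G_y^X(0,u,p_y)$.

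Next I would verify the three hypotheses of Lemma~\ref{lem:danskin}. Hypothesis (A1), together with the identification of $\partial_t g(0^+,y)$, is immediate from Theorem~\ref{thm:shape_derivative_j}: for every $y\in\overbar\Omega$ (in particular for $y\in R(\Omega^\Gamma)$) the function $j(\cdot)$ is shape differentiable at $\Omega^{\Gamma,y}$ with $\partial_t g(0^+,y)=dj(\Omega^{\Gamma,y})(X)=\partial_t G_y^X(0,u,p_y)$. Hypothesis (A2) holds because, for fixed $t$, the map $y\mapsto g(t,y)=\Psi(\Phi_t(y),u^t(y))$ is continuous (hence upper semicontinuous): $\Psi$ and $\Phi_t$ are continuous and, since $q>2$, $u^t\in W^1_{\Gamma,q}(\Omega)$ embeds continuously into $C_\Gamma(\Omega)$.

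The core of the proof is hypothesis (A3). Here I would exploit the averaged-adjoint bookkeeping. Since $u^t$ solves \eqref{eq:perturbed_ut} and $u$ solves \eqref{eq:state}, for any $w\in W^1_{\Gamma,q'}(\Omega)$ the quasi-linear terms in the Lagrangian vanish, so $g(t,y_{t_n})=G_{y_{t_n}}(t_n,u^{t_n},w)$ and $g(0,y_{t_n})=G_{y_{t_n}}(0,u,w)$; choosing $w=p_{y_{t_n}}^{t_n}$ and using \eqref{eq:identity_important},
\[
\frac{g(t_n,y_{t_n})-g(0,y_{t_n})}{t_n}=\frac{G_{y_{t_n}}(t_n,u^{t_n},p_{y_{t_n}}^{t_n})-G_{y_{t_n}}(0,u,p_{y_{t_n}}^{t_n})}{t_n}.
\]
This is precisely the difference quotient treated in Lemma~\ref{lem:averaged}. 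Since $(y_{t_n})\subset\overbar\Omega$ converges to $y\in R(\Omega^\Gamma)$, I would replace the sequence by a function $t\mapsto y(t)$ that is right-continuous at $t=0$ with $y(0)=y$ (e.g.\ a piecewise-constant interpolation along $(t_n)$), so that Lemma~\ref{lem:averaged} — whose proof only uses the weak convergence $p_{y_t}^t\rightharpoonup p_y$ from Lemma~\ref{lem:averaged_weak} and Lemma~\ref{lemma:phit} — yields $\lim_n\frac{g(t_n,y_{t_n})-g(0,y_{t_n})}{t_n}=\partial_t G_y(0,u,p_y)=\partial_t g(0^+,y)$. This gives (A3), and Lemma~\ref{lem:danskin} then delivers the claimed formula.

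The main obstacle is (A3), and within it the technical point that Lemmas~\ref{lem:averaged_weak} and \ref{lem:averaged} are stated for a prescribed right-continuous path $t\mapsto y_t$, whereas Lemma~\ref{lem:danskin} only supplies an arbitrary sequence $y_{t_n}\in R^{t_n}$ with $y_{t_n}\to y$; one must either interpolate as above or rerun the subsequence argument directly along $(t_n)$. The genuinely substantive analytic input — the uniform-in-$t$ isomorphism and resolvent bounds for $\mathcal A_q^t$ and $\mathcal B_{q'}^t$ (Lemmas~\ref{lem:invertibility} and \ref{lem:operator_Bt}) and the resulting convergences $u^t\to u$ in $C(\overbar\Omega)$ and $p_{y_t}^t\rightharpoonup p_y$ — is already in place, and is what makes the Danskin hypotheses checkable in this low-regularity, mixed-boundary setting.
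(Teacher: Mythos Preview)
Your proposal is correct and follows essentially the same route as the paper: apply Lemma~\ref{lem:danskin} with $g(t,y)=j(\Omega_t^{\Gamma,y})=G_y(t,u^t,\cdot)=G_y(t,u,p_y^t)$ and $K=\overbar\Omega$, verifying (A1) via Theorem~\ref{thm:shape_derivative_j}, (A2) via continuity of $u^t$, and (A3) via Lemma~\ref{lem:averaged}. Your write-up is in fact more explicit than the paper's, and you correctly flag the only mildly delicate point---that Lemma~\ref{lem:averaged} is phrased for a right-continuous path $t\mapsto y_t$ rather than a sequence $(y_{t_n})$---and resolve it by piecewise-constant interpolation (or, equivalently, by rerunning the argument along the given sequence).
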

\begin{proof}
We apply Lemma~\ref{lem:danskin} with 
$g(t,y):=G_{y}(t,u^t,p_y)=G_y(t,u,p_{y}^t)$ and  $K:= \overbar \Omega $. 
Assumption (A1) is clear. Assumptions (A2) follows from 
Lemma~\ref{lem:averaged_weak} and the continuity of $u$. Assumption (A3) is a 
consequence of
Lemma~\ref{lem:averaged}. 
Thus all assumptions are satsified and the claim follows. 
\end{proof}

The next theorem gives a complete characterisation of the Eulerian 
semi-derivative of $J_\infty(\cdot)$. We show that the Eulerian 
semi-derivative is related to the maximum of a boundary integral 
provided the state and adjoint state are more result. This can be
seen as a generalisation of \cite[Proposition 3.2]{lauraindistributed}.

\begin{theorem}\label{thm:eulerian_tensor}
	Suppose $\Omega^\Gamma\in \Xi$ and $ 2<q< q_0$. 
	\begin{itemize}
		\item[(a)]
	The Eulerian semi-derivative of the maximum function \eqref{eq:cost_max} 
	at $\Omega^\Gamma$ in direction $X\in \ac C^1(D,\R^2)$ is given by
\ben\label{eq:SD_adjoint}
 dJ_\infty(\Omega^\Gamma)(X) = \max_{y\in R(\Omega^\Gamma)}\left(\int_{\Omega} 
	 \Sb_1(u,p_y):\partial X + \Sb_0(u,p_y)\cdot X \;dx  + X(y) \cdot \nabla_y \Psi(y,u(y)) \right),   
 \een
where $\Sb_1,\Sb_0$ are defined in 
\eqref{eq:formula_S0},\eqref{eq:formula_S1}. The adjoint state $p_y\in W^1_{q'}(\Omega)$ 
solves for $y\in R(\Omega^\Gamma)$,
\ben
\int_{\Omega}b(x,u)L p_y \cdot L\varphi  \;dx = - \partial_u\Psi(y,u(y)) \varphi (y) \quad \text{ for all } \varphi\in W^1_{\Gamma,q}(\Omega), 
\een
and the state $u\in W^1_{\Gamma,q}(\Omega)$ solves the state equation \eqref{eq:state}. 
Moreover,
\ben\label{eq:inquality_S_1_S_0}
\int_{\Omega} \Sb_1(u,p_y):\partial X + \Sb_0(u,p_y)\cdot X \;dx  + X(y) \cdot 
\nabla_y \Psi(y,u(y)) \le 0 
\een
for all $X\in \ac C^1(\Omega,\R^2)$ and for all $y\in R(\Omega^\Gamma)$. 
\item[(b)]
	When $\partial \Omega\in C^1$, $u\in H^2(\Omega)$, $p_y\in H^2(\Omega\setminus \{y\})
	$ for all $y\in \Omega \cap R(\Omega^\Gamma)$ and $p_y\in H^2(\Omega)
	$ for all $y\in \partial \Omega \cap R(\Omega^\Gamma)$, 
then \eqref{eq:SD_adjoint} is equivalent to
\ben\label{eq:boundary_dJ_infty}
\begin{split} 
  dJ_\infty(\Omega^\Gamma)(X)  = \max_{y\in R(\Omega^\Gamma)}\bigg( \int_{\partial 
	  \Omega} \Sb_1(u,p_y)&\nu\cdot 
  \nu X_\nu  \;ds +  \chi_{\partial \Omega}(y)X_\nu(y) \nu(y)\cdot 
\nabla_y \Psi(y,u(y))\bigg)
\end{split}
\een
where $X\in \ac C^1(D,\R^2)$ and $X_\nu := X\cdot \nu$. Here $\chi_{\partial \Omega}$ denotes the 
characteristic function associated with $\partial \Omega$. 
\end{itemize}
\end{theorem}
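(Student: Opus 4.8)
The plan is to derive both parts by inserting the pointwise‑in‑$y$ shape derivative formulas of Corollary~\ref{cor:tensor_form_j} into the Danskin‑type identity $dJ_\infty(\Omega^\Gamma)(X)=\max_{y\in R(\Omega^\Gamma)}\partial_t G_y^X(0,u,p_y)$ of Theorem~\ref{thm:main_eulerian_semi}, and then to simplify the resulting per‑maximiser expressions by using that every $y\in R(\Omega^\Gamma)$ is a global maximiser of $x\mapsto\Psi(x,u(x))$ on $\overbar\Omega$.

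For part (a), formula \eqref{eq:SD_adjoint} is immediate: Theorem~\ref{thm:shape_derivative_j} identifies $\partial_t G_y^X(0,u,p_y)$ with $dj(\Omega^{\Gamma,y})(X)$, Corollary~\ref{cor:tensor_form_j}(a) rewrites the latter in the claimed tensor form, and the adjoint equation is just \eqref{eq:adjoint}. For the inequality \eqref{eq:inquality_S_1_S_0}, given $X\in\ac C^1(\Omega,\R^2)$ I would approximate it by $\phi_\eps X\in C^1_c(\Omega,\R^2)\subset\ac C^1(D,\R^2)$, where $\phi_\eps$ equals $1$ away from $\partial\Omega$ and vanishes near it with $|\nabla\phi_\eps|\lesssim\eps^{-1}$; since $|X|\lesssim\text{dist}(\cdot,\partial\Omega)$ near the Lipschitz boundary, $\phi_\eps X\to X$ uniformly on $\overbar\Omega$ and in $W^1_{q'}(\Omega,\R^2)$. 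For each compactly supported $\phi_\eps X$ the flow satisfies $\Phi_t(\Omega)=\Omega$ and fixes $\partial\Omega$, hence $\Omega^\Gamma_t=\Omega^\Gamma$ for small $t$ and $dJ_\infty(\Omega^\Gamma)(\phi_\eps X)=0$; by \eqref{eq:SD_adjoint} this maximum over $y\in R(\Omega^\Gamma)$ being zero forces each bracket $\int_\Omega\Sb_1(u,p_y):\partial(\phi_\eps X)+\Sb_0(u,p_y)\cdot(\phi_\eps X)\,dx+\phi_\eps(y)X(y)\cdot\nabla_y\Psi(y,u(y))\le0$, and letting $\eps\searrow0$, using $\Sb_0,\Sb_1\in L_q(\Omega)$ and the two convergences, yields \eqref{eq:inquality_S_1_S_0}.

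For part (b), the $H^2$‑regularity hypotheses allow me to replace $dj(\Omega^{\Gamma,y})(X)$ by its boundary form \eqref{eq:shape_tensor_j_boundary} for $y\in\Omega$ and \eqref{eq:shape_tensor_j_boundary-2} for $y\in\partial\Omega$ via Corollary~\ref{cor:tensor_form_j}(b). Since $dJ_\infty(\Omega^\Gamma)(X)=\max_{y\in R(\Omega^\Gamma)}dj(\Omega^{\Gamma,y})(X)$, it remains to kill the $\delta_y$‑term at interior maximisers and to cut $X(y)\cdot\nabla_y\Psi(y,u(y))$ down to its normal part at boundary maximisers. For $y\in\Omega\cap R(\Omega^\Gamma)$ I would fix $\overbar{B_r(y)}\subset\Omega$ and a field $\widetilde X\in C^1_c(B_r(y),\R^2)$ agreeing with $X$ near $y$; its flow keeps $y_t=\Phi_t(y)$ in $\Omega$ and fixes $\Omega^\Gamma$, so $j(\Omega^{\Gamma,y}_t)=\Psi(y_t,u(y_t))\le\Psi(y,u(y))=j(\Omega^{\Gamma,y})$, and because $j(\cdot)$ is shape differentiable, hence $dj(\Omega^{\Gamma,y})(\cdot)$ linear, testing with $\pm\widetilde X$ gives $dj(\Omega^{\Gamma,y})(\widetilde X)=0$; inserting this into \eqref{eq:shape_tensor_j_boundary}, whose $\partial\Omega$‑integral vanishes as $\widetilde X|_{\partial\Omega}=0$, yields $(\Sb_1(u,p_y)\nu\otimes\delta_y)\widetilde X+\widetilde X(y)\cdot\nabla_y\Psi(y,u(y))=0$, and since both terms depend only on the germ of $X$ at $y$ the same identity holds for $X$, so $dj(\Omega^{\Gamma,y})(X)=\int_{\partial\Omega}\Sb_1(u,p_y)\nu\cdot\nu\,X_\nu\,ds$. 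For $y\in\partial\Omega\cap R(\Omega^\Gamma)$ I would instead use fields that are tangent to the $C^1$ boundary $\partial\Omega$ and to $\Gamma$ (so $\Omega^\Gamma$ is preserved and $y_t$ runs along $\partial\Omega$): optimality of $y$ together with linearity of $dj(\Omega^{\Gamma,y})(\cdot)$ gives $dj(\Omega^{\Gamma,y})(\cdot)=0$ on this class, and then \eqref{eq:shape_tensor_j_boundary-2} forces $X(y)\cdot\nabla_y\Psi(y,u(y))=0$ for all such fields, i.e.\ $\nabla_y\Psi(y,u(y))$ has no tangential component at $y$; hence for arbitrary $X$ one has $X(y)\cdot\nabla_y\Psi(y,u(y))=X_\nu(y)\,\nu(y)\cdot\nabla_y\Psi(y,u(y))$. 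Substituting these two reductions into $\max_{y\in R(\Omega^\Gamma)}dj(\Omega^{\Gamma,y})(X)$ gives \eqref{eq:boundary_dJ_infty}.

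The step I expect to be the main obstacle is the treatment of boundary maximisers $y\in\partial\Omega\cap R(\Omega^\Gamma)$: one must construct sufficiently many admissible perturbation fields tangent to $\partial\Omega$ that simultaneously respect the decomposition $\partial\Omega=\Gamma\cup\Gamma_0$, and separately handle the finitely many junction points of $\Gamma_0$, at which such fields are forced to vanish so that the conclusion $\tau\cdot\nabla_y\Psi(y,u(y))=0$ has to be recovered by a limiting argument through nearby non‑junction maximisers (or by excluding such points). The accompanying localisation and density steps that transfer identities from compactly supported fields to general $X\in\ac C^1(D,\R^2)$ are otherwise routine but rely on the Lipschitz (Gröger) regularity of $\Omega$ and on $\Sb_0,\Sb_1\in L_q(\Omega)$.
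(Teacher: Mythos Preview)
Your strategy coincides with the paper's: both derive \eqref{eq:SD_adjoint} by combining Theorem~\ref{thm:main_eulerian_semi} with Corollary~\ref{cor:tensor_form_j}(a), and both extract the inequality \eqref{eq:inquality_S_1_S_0} and the simplifications in part (b) from the invariance $dJ_\infty(\Omega^\Gamma)(X)=0$ for vector fields that leave $\Omega^\Gamma$ fixed. The mechanical differences are minor. For \eqref{eq:inquality_S_1_S_0} you approximate $X\in\ac C^1(\Omega,\R^2)$ by compactly supported $\phi_\eps X$ and pass to the limit, whereas the paper applies Nagumo's theorem directly to $X\in\ac C^1(\Omega,\R^2)$; your route is the more honest one, since extending such $X$ by zero to $D$ need not be $C^1$. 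For part (b) you argue the vanishing of the $\delta_y$-term and the tangential part of $\nabla_y\Psi$ via the pointwise optimality of $y$ (testing with $\pm\widetilde X$), while the paper instead feeds the boundary expressions \eqref{eq:shape_tensor_j_boundary}, \eqref{eq:shape_tensor_j_boundary-2} back into the inequality $dj(\Omega^{\Gamma,y})(X)\le dJ_\infty(\Omega^\Gamma)(X)=0$ already obtained in (a) and uses linearity of $dj$ to upgrade ``$\le 0$ for all $X$'' to ``$=0$''; the two arguments are interchangeable. Your flagged obstacle---producing tangential fields that also preserve the partition $\partial\Omega=\Gamma\cup\Gamma_0$ when $y$ is a boundary maximiser, and handling junction points of $\Gamma_0$---is a genuine subtlety that the paper's proof does not address either (it simply asserts $dJ_\infty(\Omega^\Gamma)(X)=0$ for fields with $X\cdot\nu=0$ on $\partial\Omega$, tacitly assuming $\Gamma$ is preserved).
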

\begin{proof}
Equation \eqref{eq:SD_adjoint} follows by 
combining Corollary~\ref{cor:tensor_form_j},
Theorem~\ref{thm:main_eulerian_semi} and Theorem~\ref{thm:shape_derivative_j}.

We now prove \eqref{eq:inquality_S_1_S_0}. By Nagumo's theorem it follows
    $dJ_\infty(\Omega^\Gamma)(X)=0$ for all $X\in \ac C^1(\Omega,\R^2)$ and this 
    implies,
    \ben\label{eq:dj_le_dJ}
    dj(\Omega^{\Gamma,y})(X)   \le dJ_\infty(\Omega^\Gamma)(X)=0	
    \een
for all $y\in R(\Omega^\Gamma)$ and all $X\in \ac C^1(\Omega,\R^2)$. 
   Taking into account \eqref{eq:shape_tensor_j} we recover  \eqref{eq:inquality_S_1_S_0}.
   
   Now under the assumption of item (b) we know from 
   Corollary~\ref{cor:tensor_form_j} that $dj(\Omega^{\Gamma,y})$ has the 
   form \eqref{eq:shape_tensor_j_boundary} for $y\in R(\Omega^\Gamma)\cap\Omega$. So inserting \eqref{eq:shape_tensor_j_boundary} into \eqref{eq:dj_le_dJ} and taking into account $X=0$ on 
   $\partial \Omega$, we obtain
   $
   (\Sb_1(u,p_y)\nu \otimes \delta_y) X  +  X(y) \cdot \nabla_y \Psi(y,u(y)) 
   \le 0$ for all $y\in \Omega \cap R(\Omega^\Gamma)$ and all $X\in \ac 
   C^1(\Omega,\R^2)$. 
      Since this inequality is true for all $X\in \ac C^1(\Omega,\R^2)$ we obtain 
  $ (\Sb_1(u,p_y)\nu \otimes \delta_y)  +  \nabla_y \Psi(y,u(y)) = 0$
  for all $y\in \Omega\cap  R(\Omega^\Gamma)$. Further 
  we get
   $
    X(y) \cdot \nabla_y \Psi(y,u(y)) \le 0$ for all $y\in \partial\Omega \cap 
    R(\Omega^\Gamma)$ and all $ X\in 
    C^1(\Omega,\R^2)$ with $X\cdot \nu=0$  on $ \partial 
    \Omega. 
   $
  Consequently \eqref{eq:boundary_dJ_infty} follows from \eqref{eq:shape_tensor_j_boundary} and \eqref{eq:shape_tensor_j_boundary-2}.
\end{proof}

\begin{corollary}\label{cor:boundary}
    Let $\Omega^\Gamma\in \Xi$.  Assume that $
    R(\Omega^\Gamma)\subset \partial \Omega$ and $\Gamma=\emptyset$. Then 
    \ben\label{eq:SD_boundary}
    dJ_\infty(\Omega^\Gamma)(X) = \max_{y\in R(\Omega^\Gamma)} X(y) \cdot \nabla_y 
    \Psi(y,u(y)) \quad \text{ for all } X\in \ac C^1(D,\R^2).
    \een 
\end{corollary}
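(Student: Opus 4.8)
The plan is to specialise Theorem~\ref{thm:eulerian_tensor}(a) to the case $\Gamma=\emptyset$ and $R(\Omega^\Gamma)\subset\partial\Omega$, and to show that the only surviving term in \eqref{eq:SD_adjoint} is the pointwise contribution $X(y)\cdot\nabla_y\Psi(y,u(y))$. First I would observe that when $\Gamma=\emptyset$ the state $u$ and every adjoint $p_y$ lie in $\ac W^1_q(\Omega)$ and $\ac W^1_{q'}(\Omega)$ respectively, so both vanish on all of $\partial\Omega$. Hence for $y\in R(\Omega^\Gamma)\subset\partial\Omega$ the right-hand side of the adjoint equation \eqref{eq:adjoint} is $-\partial_u\Psi(y,u(y))\,\varphi(y)$ with $\varphi\in W^1_{\Gamma,q}(\Omega)=\ac W^1_q(\Omega)$ vanishing at $y\in\partial\Omega$, so the source term is zero and therefore $p_y\equiv 0$ by uniqueness (Lemma~\ref{lem:invertibility} / Lemma~\ref{lem:operator_Bt}).

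With $p_y=0$ the tensors simplify dramatically: from \eqref{eq:formula_S1} and \eqref{eq:formula_S0} we get $\Sb_1(u,p_y)=0$ and $\Sb_0(u,p_y)=0$, since every term in $\Sb_1$ and $\Sb_0$ carries a factor of $p_y$ or $\nabla p_y$. Substituting into \eqref{eq:SD_adjoint} gives
\ben
dJ_\infty(\Omega^\Gamma)(X) = \max_{y\in R(\Omega^\Gamma)}\left( \int_\Omega 0 \;dx + X(y)\cdot\nabla_y\Psi(y,u(y)) \right) = \max_{y\in R(\Omega^\Gamma)} X(y)\cdot\nabla_y\Psi(y,u(y)),
\een
which is exactly \eqref{eq:SD_boundary}. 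This is the whole argument; there is no genuine obstacle, only the bookkeeping point that one must justify $p_y=0$ rather than merely $p_y=0$ ``on the boundary''.

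The one subtlety worth spelling out is why the test space at the point $y$ really kills the source: since $q>2$, Sobolev embedding gives $\ac W^1_q(\Omega)\hookrightarrow C_{\Gamma_0}(\Omega)$ with $\Gamma_0=\partial\Omega$ here, so every admissible test function $\varphi$ is continuous on $\overbar\Omega$ and satisfies $\varphi=0$ on $\partial\Omega$; in particular $\varphi(y)=0$ for $y\in\partial\Omega$. Thus the functional $\varphi\mapsto-\partial_u\Psi(y,u(y))\varphi(y)$ is the zero functional on $W^1_{\Gamma,q}(\Omega)$, and the averaged adjoint operator being an isomorphism forces $p_y=0$ in $\ac W^1_{q'}(\Omega)$. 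One should also note that $R(\Omega^\Gamma)$ is nonempty and compact (it is a closed subset of the compact set $\overbar\Omega$, by continuity of $x\mapsto\Psi(x,u(x))$), so the maximum in \eqref{eq:SD_boundary} is attained and the formula is meaningful. The hypothesis $2<q\le q_0$ needed to invoke Theorem~\ref{thm:eulerian_tensor} is inherited from the standing assumptions on $\Omega^\Gamma\in\Xi$.
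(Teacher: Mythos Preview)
Your proposal is correct and follows exactly the same approach as the paper: the paper's proof is the single sentence ``This follows immediately from \eqref{eq:SD_adjoint}, since $p_y=0$ for all $y\in \Gamma_0=\partial \Omega$,'' and you have simply spelled out the justification for $p_y=0$ (zero source term in \eqref{eq:adjoint} because test functions vanish on $\partial\Omega$, then uniqueness) and the consequence $\Sb_0=\Sb_1=0$ in full detail.
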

\begin{proof}
This follows immediately from \eqref{eq:SD_adjoint}, since $p_y=0$ for all $y\in \Gamma_0=\partial \Omega$. 
\end{proof}

\begin{corollary}
    Let $\Omega^\Gamma\in \Xi$. If $
    R(\Omega^\Gamma)=\{y_0\}$ is a single-tone, then $J_\infty(\cdot)$ is shape 
    differentiable at $\Omega^\Gamma$. 
\end{corollary}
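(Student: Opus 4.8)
The plan is to reduce the statement directly to Theorem~\ref{thm:main_eulerian_semi} combined with Theorem~\ref{thm:shape_derivative_j}. By Theorem~\ref{thm:main_eulerian_semi} we already know that for every $X\in \ac C^1(D,\R^2)$ the Eulerian semi-derivative exists and equals
\benn
dJ_\infty(\Omega^\Gamma)(X) = \max_{y\in R(\Omega^\Gamma)} \partial_t G_{y}^X(0,u,p_y).
\eenn
When $R(\Omega^\Gamma)=\{y_0\}$ the maximum is taken over a one-element set, so it collapses to $dJ_\infty(\Omega^\Gamma)(X) = \partial_t G_{y_0}^X(0,u,p_{y_0})$. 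This already shows that $dJ_\infty(\Omega^\Gamma)(X)$ is well-defined for all directions $X$.

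Next I would invoke Theorem~\ref{thm:shape_derivative_j}, which asserts that $j(\cdot)$ is shape differentiable at $\Omega^{\Gamma,y_0}$ with $dj(\Omega^{\Gamma,y_0})(X) = \partial_t G_{y_0}^X(0,u,p_{y_0})$. Hence $dJ_\infty(\Omega^\Gamma)(X) = dj(\Omega^{\Gamma,y_0})(X)$ for every $X\in \ac C^1(D,\R^2)$, and by the definition of shape differentiability (Definition~\ref{def1}(ii)) the map $X\mapsto dj(\Omega^{\Gamma,y_0})(X)$ is linear and continuous on $\ac C^1(D,\R^2)$. Therefore $X\mapsto dJ_\infty(\Omega^\Gamma)(X)$ is linear and continuous, which is exactly the assertion that $J_\infty(\cdot)$ is shape differentiable at $\Omega^\Gamma$. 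If one prefers an explicit formula, Corollary~\ref{cor:tensor_form_j} (or the singleton case of Theorem~\ref{thm:eulerian_tensor}(a)) gives the tensor representation $dJ_\infty(\Omega^\Gamma)(X) = \int_{\Omega} \Sb_1(u,p_{y_0}):\partial X + \Sb_0(u,p_{y_0})\cdot X \,dx + X(y_0)\cdot\nabla_y\Psi(y_0,u(y_0))$, which is manifestly linear in $X$ and continuous with respect to the $C^1$-topology.

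There is essentially no obstacle here: the whole content of the corollary is carried by the two earlier theorems, and the only thing to verify is the trivial observation that a maximum over a single point is the identity map. The one point that deserves a line of care is quoting the correct notion, namely Definition~\ref{def1}(ii), which requires both linearity and continuity of $X\mapsto dJ_\infty(\Omega^\Gamma)(X)$ — both inherited verbatim from the shape differentiability of $j(\cdot)$ established in Theorem~\ref{thm:shape_derivative_j}.
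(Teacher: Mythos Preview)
Your proposal is correct and is precisely the argument the paper has in mind: the corollary is stated without proof because it follows immediately from Theorem~\ref{thm:main_eulerian_semi} (the max collapses over a singleton) together with the linearity and continuity of $X\mapsto dj(\Omega^{\Gamma,y_0})(X)$ established in Theorem~\ref{thm:shape_derivative_j}. There is nothing to add.
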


\begin{corollary}\label{cor:subadditive}
	Let $\Omega^\Gamma\in \Xi$. We have
	$
       	dJ_\infty(\Omega^\Gamma)(X+Y) \le dJ_\infty(\Omega^\Gamma)(X) +dJ_\infty(\Omega^\Gamma)(Y)	
	$
	for all $X,Y\in \ac C^1(D,\R^2)$. 
\end{corollary}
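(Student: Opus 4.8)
The plan is to exploit the representation of $dJ_\infty$ from Theorem~\ref{thm:main_eulerian_semi} as a pointwise maximum over $R(\Omega^\Gamma)$ of quantities that are \emph{linear} in the vector field, and then to invoke the elementary fact that a supremum of a sum is bounded by the sum of the suprema. Concretely, by Theorem~\ref{thm:main_eulerian_semi} we have, for every $X\in\ac C^1(D,\R^2)$,
\[
dJ_\infty(\Omega^\Gamma)(X) = \max_{y\in R(\Omega^\Gamma)} \partial_t G_y^X(0,u,p_y),
\]
with $(u,p_y)$ the state and adjoint state solving \eqref{eq:state} and \eqref{eq:adjoint}; note that neither $u$ nor $p_y$ depends on $X$.

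First I would record that for each fixed $y\in R(\Omega^\Gamma)$ the map $X\mapsto \partial_t G_y^X(0,u,p_y)$ is linear. This is immediate from the tensor form \eqref{eq:shape_tensor_j} in Corollary~\ref{cor:tensor_form_j}, namely
\[
\partial_t G_y^X(0,u,p_y) = \int_{\Omega} \Sb_1(u,p_y):\partial X + \Sb_0(u,p_y)\cdot X \;dx + X(y)\cdot\nabla_y\Psi(y,u(y)),
\]
where $\Sb_1(u,p_y)$ and $\Sb_0(u,p_y)$ are independent of $X$; every term on the right-hand side is linear in $X$ (the first through $\partial X$ and $X$, the last through the point evaluation $X(y)$). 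In particular $\partial_t G_y^{X+Y}(0,u,p_y) = \partial_t G_y^X(0,u,p_y) + \partial_t G_y^Y(0,u,p_y)$ for all $X,Y\in\ac C^1(D,\R^2)$.

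Then I would simply combine the two observations:
\[
dJ_\infty(\Omega^\Gamma)(X+Y) = \max_{y\in R(\Omega^\Gamma)}\bigl(\partial_t G_y^X(0,u,p_y) + \partial_t G_y^Y(0,u,p_y)\bigr) \le \max_{y\in R(\Omega^\Gamma)}\partial_t G_y^X(0,u,p_y) + \max_{y\in R(\Omega^\Gamma)}\partial_t G_y^Y(0,u,p_y),
\]
the inequality being the standard bound $\sup(a+b)\le\sup a+\sup b$ over the common (nonempty, compact) index set $R(\Omega^\Gamma)$. The right-hand side equals $dJ_\infty(\Omega^\Gamma)(X)+dJ_\infty(\Omega^\Gamma)(Y)$ by Theorem~\ref{thm:main_eulerian_semi}, which is the claim.

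There is essentially no obstacle here: the only point requiring care is that the maximum in Theorem~\ref{thm:main_eulerian_semi} is taken over the \emph{same} set $R(\Omega^\Gamma)$ for all three vector fields $X$, $Y$ and $X+Y$ — which is true precisely because $R(\Omega^\Gamma)$ depends only on $\Omega^\Gamma$ (through $u$ and $\Psi$), not on the perturbation field. Given that, the result is a one-line consequence of linearity of the integrand in the field and subadditivity of the maximum.
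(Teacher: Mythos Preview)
Your argument is correct and is exactly the intended one: the paper states this as a corollary without proof, and the only sensible reading is precisely the representation from Theorem~\ref{thm:main_eulerian_semi} combined with linearity of $X\mapsto \partial_t G_y^X(0,u,p_y)$ (equivalently \eqref{eq:shape_tensor_j}) and subadditivity of the maximum over the fixed set $R(\Omega^\Gamma)$.
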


\begin{remark}
    We note that to show the differentiability of $J_\infty(\cdot)$ one might want to use the material derivative approach; cf. \cite{SokZol89}. 
    In our general setting this approach is difficult to apply as one would have to show the strong differentiability of 
    $t\mapsto u^t$ from $[0,\tau]$ into $W^1_{\Gamma,p}(\Omega)$ for some $q>2$. The weak differentiablity is not sufficient as 
    $W^1_{\Gamma,p}(\Omega)$ does not embed compactly into $C_\Gamma(\Omega)$.
\end{remark}

\section{Characterisation of stationary points of $J_\infty(\cdot)$}\label{sec:charac_stationary_points}
 This section is devoted to the characterisation of stationary points
 of the shape function $J_\infty(\cdot)$. We closely follow the approach of 
 \cite{MR1088479}, where finite dimensional problems are studied.
 Accordingly many results have to be carefully modified to account for the 
 infinite dimensionality of our problem. Throughout this section we suppose that 
 the assumptions of Theorem~\ref{thm:main_eulerian_semi}, item (a), are satisfied. 
 \subsection{Gradient of $j(\cdot)$}
 Let $\mathcal H(\Omega,\R^2)$ be some Hilbert space of functions from $\Omega$ into $\R^2$. According to  
 Corollary~\ref{cor:tensor_form_j} the shape derivative of $j(\cdot)$ at $\Omega^{\Gamma,y}$, 
 $y\in \overbar \Omega$ is given by
 \ben
dj(\Omega^{\Gamma,y})(X) = \int_{\Omega} \Sb_1(u,p_y):\partial X + \Sb_0(u,p_y)\cdot X 
\;dx  + X(y)\cdot \nabla_y\Psi(y,u(y))
\een
for all $X\in \ac C^1(D,\R^2)$.  
Assume that $\mathcal H(\Omega,\R^2)\subset \ac C^{0,1}(D,\R^2)$, such that
$
a\otimes \delta_y:\mathcal H(\Omega,\R^2)\rightarrow \R: X\mapsto a\cdot X(y)
$ is continuous for all $a\in \R^2$. 
Denoting by $\mathcal R:\mathcal H(\Omega,\R^2)\rightarrow (\mathcal H(\Omega,\R^2))^*$ 
the Riesz isomorphism then we have 
$\mathcal R^{-1}(dj(\Omega^{\Gamma,y})) = \nabla j(\Omega^{\Gamma,y})$ and by definition it 
satisfies for fixed $y\in \overbar \Omega$ the variational equation,
\ben\label{eq:X_y}
(\nabla j(\Omega^{\Gamma,y}),\varphi)_{\mathcal H} = 
dj(\Omega^{\Gamma,y})(\varphi) \quad \text{ for all } \varphi \in  \mathcal H(\Omega,\R^2).
\een
As a consequence \eqref{eq:SD_adjoint} can be written as
$
dJ_\infty(\Omega^\Gamma)(X) = \max_{y\in R(\Omega^\Gamma)} (\nabla j(\Omega^{\Gamma,y}),X)_{\mathcal H}.  
$
One way to construct the space $\mathcal H(\Omega,\R^2)$ is 
to define it as reproducing kernel Hilbert space associated with matrix-valued kernels of 
the form $K(x,y) = \phi(|x-y|^2/\sigma)I$, $\sigma> 0$, where
$\phi\in C^1(\R)$ is some smooth function. 
Then  \cite[Lemma 3.13]{eigelsturm16} provides an explicit 
formula for the gradient of $j(\Omega^{\Gamma,y})$.  An alternative way, also described in \cite{eigelsturm16}, is to choose $\mathcal H(\Omega,\R^2)$ as a finite element
space $V_N(\Omega,\R^2)$. This is described in more detail in the last 
section of this paper. In the following we fix the space 
$\mathcal H(\Omega,\R^2)$ and denote the gradient of $j(\cdot) $ simply by 
$\nabla j(\Omega^{\Gamma,y})$ always keeping in mind that it depends on the choice
of the space $\mathcal H(\Omega,\R^2)$ and the inner product chosen.

\subsection{Stationary points}
The following presentation is based on \cite[Chapter 3]{MR1088479}. We point 
out that there only the finite dimensional case was studied and we have to 
adapt our results to the infinite dimensional setting.

Let us begin with the definition stationary points. 
\begin{definition}
	The set $\Omega^\Gamma\in \Xi$ is said to be a stationary point for $J_\infty(\cdot)$ 
	with respect to perturbations in $\mathcal 
	H(\Omega,\R^2)$ if 
$ dJ_\infty(\Omega^\Gamma)(X)\ge 0$ for all  $X\in \mathcal H(\Omega,\R^2).  $
\end{definition}

Define the sets
\ben
H(\Omega^\Gamma) := \{ \nabla j(\Omega^{\Gamma,y}):\; y\in R(\Omega^\Gamma) \}
\een
and the convex hull of $H(\Omega^\Gamma)$ by 
\ben
L(\Omega^\Gamma) := \left\{\sum_{k=1}^n\alpha_k X^k: \; n\in \N, \; X^k\in H(\Omega^\Gamma),\;  \alpha_k\ge 0, \;  k=1,\ldots, n,\quad \sum_{k=1}^n\alpha_k =1  \right\} .  
\een
The closure of $L(\Omega^\Gamma)$ in $\mathcal H(\Omega,\R^2)$ is denoted by 
$\bar L(\Omega^\Gamma)$. 
We now show that  $H(\Omega^\Gamma)$ is closed and bounded.

\begin{remark}
	Notice that the set $\bar L(\Omega^\Gamma)$ is related to the Clarke 
	subdifferential; cf. \cite{MR1058436}.
\end{remark}
\begin{lemma}
	The set $H(\Omega^\Gamma)$ is closed and bounded in $\mathcal H(\Omega,\R^2)$. 
\end{lemma}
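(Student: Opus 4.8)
The plan is to prove the stronger statement that the map $\Lambda\colon\overbar\Omega\to\mathcal H(\Omega,\R^2)$, $y\mapsto\nabla j(\Omega^{\Gamma,y})$, is well defined, bounded and \emph{continuous}; then, since $R(\Omega^\Gamma)$ is compact, $H(\Omega^\Gamma)=\Lambda(R(\Omega^\Gamma))$ is the continuous image of a compact set, hence compact, and in particular closed and bounded. The compactness of $R(\Omega^\Gamma)$ is immediate from \eqref{eq:R}: since $q>2$ we have $u\in W^1_{\Gamma,q}(\Omega)\hookrightarrow C(\overbar\Omega)$, so $x\mapsto\Psi(x,u(x))$ is continuous on $\overbar\Omega$, and $R(\Omega^\Gamma)$ is a level set of this function, i.e.\ a closed subset of the compact set $\overbar\Omega$.

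For $\Lambda$, recall from Corollary~\ref{cor:tensor_form_j} that
\[
dj(\Omega^{\Gamma,y})(X)=\int_{\Omega}\Sb_1(u,p_y):\partial X+\Sb_0(u,p_y)\cdot X\,dx+X(y)\cdot\nabla_y\Psi(y,u(y)),
\]
and that the Riesz isometry sends $dj(\Omega^{\Gamma,y})$ to $\nabla j(\Omega^{\Gamma,y})$, so $\|\nabla j(\Omega^{\Gamma,y})-\nabla j(\Omega^{\Gamma,y'})\|_{\mathcal H}=\|dj(\Omega^{\Gamma,y})-dj(\Omega^{\Gamma,y'})\|_{\mathcal H^{*}}$. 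Because $\mathcal H(\Omega,\R^2)\hookrightarrow\ac C^{0,1}(D,\R^2)$ and $\Sb_1(u,\cdot),\Sb_0(u,\cdot)$ are linear in their argument, the difference of the two integral terms is bounded, uniformly over $\|X\|_{\mathcal H}\le 1$, by $c\,(\|\Sb_1(u,p_y-p_{y'})\|_{L_1}+\|\Sb_0(u,p_y-p_{y'})\|_{L_1})$; H\"older's inequality (pairing the fixed factors $\nabla u\in L_q$ and $\nabla f\in L_q$ with $p_y-p_{y'}\in W^1_{q'}$) together with the bound $2\beta'(|\nabla u|^2)|\nabla u|^2\le K$ coming from Assumption~\ref{assmp:trans}, item~4, bounds this by a constant depending only on $\|u\|_{W^1_q(\Omega)}$ and $\|f\|_{W^1_q(\Omega)}$, times $\|p_y-p_{y'}\|_{W^1_{q'}(\Omega)}$. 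The difference of the Dirac terms is bounded by $\mathrm{Lip}(X)\,|y-y'|\,|\nabla_y\Psi(y,u(y))|+\|X\|_{C(\overbar\Omega)}\,|\nabla_y\Psi(y,u(y))-\nabla_y\Psi(y',u(y'))|$, which tends to $0$ as $y'\to y$ uniformly over $\|X\|_{\mathcal H}\le 1$, since $\mathrm{Lip}(X),\|X\|_{C(\overbar\Omega)}\le c\|X\|_{\mathcal H}$ and $y\mapsto\nabla_y\Psi(y,u(y))$ is continuous. Hence the continuity of $\Lambda$ (and, by the same estimates with $y'$ replaced by a fixed reference point, its boundedness) reduces to the continuity of $y\mapsto p_y$ as a map $\overbar\Omega\to W^1_{\Gamma,q'}(\Omega)$ and to $\sup_{y\in\overbar\Omega}\|p_y\|_{W^1_{q'}(\Omega)}<\infty$.

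For the latter, write the adjoint equation \eqref{eq:adjoint} as $\mathcal B_{q'}p_y=-\partial_u\Psi(y,u(y))\,E_\Gamma^{*}\delta_y$ in $W^{-1}_{\Gamma,q'}(\Omega)$, where $E_\Gamma\colon W^1_{\Gamma,q}(\Omega)\to C_\Gamma(\Omega)$ is the Sobolev embedding. By Lemma~\ref{lem:operator_Bt}, $\mathcal B_{q'}^{-1}$ is globally Lipschitz, so it is enough to show that $y\mapsto-\partial_u\Psi(y,u(y))\,E_\Gamma^{*}\delta_y$ is continuous and bounded on $\overbar\Omega$. The scalar factor $\partial_u\Psi(y,u(y))$ is continuous and bounded there, so everything comes down to
\[
\|E_\Gamma^{*}\delta_y-E_\Gamma^{*}\delta_{y'}\|_{W^{-1}_{\Gamma,q'}(\Omega)}=\sup_{\|\varphi\|_{W^1_q(\Omega)}\le 1}|\varphi(y)-\varphi(y')|\longrightarrow 0\quad(y'\to y).
\]
This is \emph{not} the (false) assertion that $y\mapsto\delta_y$ is norm continuous in $\mathcal M(\overbar\Omega)$; it is exactly the equicontinuity of the unit ball of $W^1_{\Gamma,q}(\Omega)$ in $C(\overbar\Omega)$, which holds because $\Omega$ is a Lipschitz (hence extension) domain and $q>2$, so Morrey's embedding gives $\|\varphi\|_{C^{0,1-2/q}(\overbar\Omega)}\le c\|\varphi\|_{W^1_q(\Omega)}$ and therefore $|\varphi(y)-\varphi(y')|\le c\,|y-y'|^{1-2/q}$ whenever $\|\varphi\|_{W^1_q(\Omega)}\le 1$. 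I expect this to be the only genuinely non-routine point: one has to upgrade the merely weak-$*$ continuity of $y\mapsto\delta_y$ to norm continuity in the negative Sobolev space, which is precisely where the maximal elliptic regularity exponent $q>2$ enters. Collecting the above, $\Lambda$ is continuous and bounded on the compact set $R(\Omega^\Gamma)$, so $H(\Omega^\Gamma)=\Lambda(R(\Omega^\Gamma))$ is compact, in particular closed and bounded, as claimed.
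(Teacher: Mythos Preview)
Your proof is correct and takes a somewhat different, in fact stronger, route than the paper. You establish that the gradient map $y\mapsto\nabla j(\Omega^{\Gamma,y})$ is \emph{norm}-continuous on $\overbar\Omega$, the key step being that $y\mapsto p_y$ is strongly continuous in $W^1_{\Gamma,q'}(\Omega)$: Morrey's embedding $W^1_{\Gamma,q}(\Omega)\hookrightarrow C^{0,1-2/q}(\overbar\Omega)$ makes $y\mapsto E_\Gamma^*\delta_y$ norm-continuous in $W^{-1}_{\Gamma,q'}(\Omega)$, and the linear isomorphism $\mathcal B_{q'}$ does the rest. This gives compactness of $H(\Omega^\Gamma)$, which is strictly more than closed plus bounded in the infinite-dimensional space $\mathcal H(\Omega,\R^2)$. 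The paper proceeds more economically: given a sequence $X^{y_n}\to X$ strongly in $\mathcal H$, it extracts $y_{n_k}\to y\in R(\Omega^\Gamma)$ by compactness, invokes only the \emph{weak} convergence $p_{y_{n_k}}\rightharpoonup p_y$ already available from Lemma~\ref{lem:averaged_weak}, and passes to the limit in $(X^{y_{n_k}},\varphi)_{\mathcal H}=dj(\Omega^{\Gamma,y_{n_k}})(\varphi)$ for each fixed $\varphi$; since weak and strong limits coincide, $X=X^y\in H(\Omega^\Gamma)$. So the paper trades your Morrey upgrade for a test-function-by-test-function identification of the limit. Your argument has the advantage of also justifying cleanly the boundedness claim (the paper's one-line ``$y\mapsto\|X^y\|_{\mathcal H}$ is continuous'' is not actually implied by the weak convergence shown there) and of delivering compactness, which can be useful in the subsequent analysis of $\bar L(\Omega^\Gamma)$.
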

\begin{proof}
	We first show that $H(\Omega^\Gamma)$ is closed. Set 
	$X^y:=\nabla j(\Omega^{\Gamma,y})$. 
	Let $\{y_n\}$ be a sequence in $R(\Omega^\Gamma)$ such that $X^{y_n}\rightarrow 
	X$ in $\mathcal H(\Omega,\R^2)$. Since $R(\Omega^\Gamma)$ is compact there is a subsequence
	$\{y_{n_k}\}$ such that $y_{n_k}\rightarrow y\in R(\Omega^\Gamma)$ as 
	$k\rightarrow \infty$.  Hence Lemma~\ref{lem:averaged_weak} implies 
	$p_{n_k}\rightharpoonup p_y$ weakly in $W^1_{\Gamma,q'}(\Omega)$ and 
	 using \eqref{eq:X_y}, we get
\ben
\begin{split}
	(X^{y_{n_k}}, \varphi)_{\mathcal H} & = 	\left( \int_{\Omega} 
		\Sb_1(u,p_{y_{n_k}}):\partial \varphi + 
		\Sb_0(u,p_{y_{n_k}})\cdot \varphi \;dx  + \nabla_y\Psi(y_{n_k},u(y_{n_k}))\cdot 
		\varphi(y_{n_k}) \right)\\
	& \rightarrow \left( \int_{\Omega} \Sb_1(y,p_y):\partial \varphi + 
		\Sb_0(u,p_y)\cdot \varphi \;dx  + \nabla_y \Psi(y,u(y)) \right) \\
	& = (X, \varphi)_{\mathcal H}\quad \text{ for all }\varphi\in \mathcal H(\Omega,\R^2).
\end{split} 
\een 
Now as the weak limit and the strong limit coincide it 
follows $X=X^y$.
The boundedness of $H(\Omega^\Gamma)$ is obvious since 
$\overbar \Omega \rightarrow \R:\, y\mapsto \|X^y\|_{\mathcal H} $ is 
continuous and $\overbar \Omega$ compact. 
\end{proof}
With the definition of $H(\Omega^\Gamma)$ we can write the Eulerian semi-derivative 
of $J_\infty(\cdot)$ as\newline
$
dJ_\infty(\Omega^\Gamma)(X) = \max_{Z\in H(\Omega^\Gamma)}(Z,X)_{\mathcal H}	 
$
and the right hand side can be further rewritten. 
\begin{lemma}
	There holds for all $X\in \mathcal H(\Omega,\R^2)$,
	\ben
       	\max_{Z\in H(\Omega^\Gamma)}(Z,X)_{\mathcal H} = \max_{Z\in \bar L(\Omega^\Gamma)}(Z,X)_{\mathcal H}.
\een
\end{lemma}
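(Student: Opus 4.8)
The plan is to invoke the elementary principle that a continuous linear functional attains the same supremum over a set, over its convex hull, and over the closure of its convex hull. Fix $X\in\mathcal{H}(\Omega,\R^2)$ and abbreviate by $\ell(Z):=(Z,X)_{\mathcal H}$ the associated bounded linear functional on $\mathcal{H}(\Omega,\R^2)$, which is Lipschitz with constant $\|X\|_{\mathcal H}$. First I would record the chain of inclusions $H(\Omega^\Gamma)\subseteq L(\Omega^\Gamma)\subseteq\bar L(\Omega^\Gamma)$, whence $\sup_{Z\in H(\Omega^\Gamma)}\ell(Z)\le\sup_{Z\in\bar L(\Omega^\Gamma)}\ell(Z)$, and I would observe that the left-hand supremum is in fact attained: $H(\Omega^\Gamma)$ is the image of the compact set $R(\Omega^\Gamma)$ under $y\mapsto X^y:=\nabla j(\Omega^{\Gamma,y})$, and $y\mapsto\ell(X^y)$ is continuous on $R(\Omega^\Gamma)$, because $y_n\to y$ forces $p_{y_n}\rightharpoonup p_y$ in $W^1_{\Gamma,q'}(\Omega)$ by Lemma~\ref{lem:averaged_weak}, hence $X^{y_n}\rightharpoonup X^y$ in $\mathcal{H}(\Omega,\R^2)$ exactly as in the proof of the preceding lemma, and weak convergence is preserved by $\ell$. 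Thus $\max_{Z\in H(\Omega^\Gamma)}\ell(Z)$ exists.

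For the reverse inequality I would take an arbitrary $Z\in L(\Omega^\Gamma)$, write it as $Z=\sum_{k=1}^n\alpha_k X^k$ with $X^k\in H(\Omega^\Gamma)$, $\alpha_k\ge 0$, $\sum_{k=1}^n\alpha_k=1$, and use linearity together with $\sum_k\alpha_k=1$ to obtain $\ell(Z)=\sum_{k=1}^n\alpha_k\ell(X^k)\le\max_{1\le j\le n}\ell(X^j)\le\max_{Z'\in H(\Omega^\Gamma)}\ell(Z')$. Taking the supremum over $Z\in L(\Omega^\Gamma)$ gives $\sup_{Z\in L(\Omega^\Gamma)}\ell(Z)\le\max_{Z'\in H(\Omega^\Gamma)}\ell(Z')$, and since $\ell$ is continuous this bound passes to the closure, so that $\sup_{Z\in\bar L(\Omega^\Gamma)}\ell(Z)\le\max_{Z'\in H(\Omega^\Gamma)}\ell(Z')$.

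Combining the two inequalities yields $\sup_{Z\in\bar L(\Omega^\Gamma)}\ell(Z)=\max_{Z'\in H(\Omega^\Gamma)}\ell(Z')$; in particular the left supremum is attained, namely at any maximiser over $H(\Omega^\Gamma)$, which belongs to $\bar L(\Omega^\Gamma)$, so it is a maximum, and this is exactly the asserted identity $\max_{Z\in H(\Omega^\Gamma)}(Z,X)_{\mathcal H}=\max_{Z\in\bar L(\Omega^\Gamma)}(Z,X)_{\mathcal H}$ since $X\in\mathcal{H}(\Omega,\R^2)$ was arbitrary. There is no genuine obstacle here; the only point that deserves a sentence of care is the attainment of the maximum over $H(\Omega^\Gamma)$, which is handled by compactness of $R(\Omega^\Gamma)$ and the weak continuity of $y\mapsto X^y$ — strong continuity is not needed, because $\ell$ is weakly continuous.
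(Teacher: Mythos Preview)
Your proof is correct and follows essentially the same route as the paper: one inequality from the inclusion $H(\Omega^\Gamma)\subset\bar L(\Omega^\Gamma)$, the other from the fact that a bounded linear functional cannot increase when passing from a set to its convex hull and then to the closure. The only difference is cosmetic: the paper approximates an arbitrary $\hat Z\in\bar L(\Omega^\Gamma)$ by convex combinations and passes to the limit directly, whereas you first bound $\sup_L$ and then invoke continuity to pass to $\sup_{\bar L}$; you also add a sentence justifying that the maximum over $H(\Omega^\Gamma)$ is actually attained, which the paper leaves implicit.
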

\begin{proof}
Since $H(\Omega^\Gamma)\subset \bar L(\Omega^\Gamma)$, we immediately get the inequality
\ben\label{eq:H_le}
\max_{Z\in H(\Omega^\Gamma)}(Z,X)_{\mathcal H} \le \max_{Z\in \bar 
	L(\Omega^\Gamma)}(Z,X)_{\mathcal H}. 
\een
To show the other inquality let $\hat Z\in \bar L(\Omega^\Gamma)$. Then by definition we 
find a sequence $\{Z_n\}$ in $\mathcal H(\Omega,\R^2)$ such that 
$Z_n\rightarrow \hat Z$ in $\mathcal H(\Omega,\R^2)$ and
$
Z_n=\sum_{i=1}^n \alpha^i_n Z^{y^n_i} $ with $\sum_{i=1}^n\alpha^i_n =1,$ $\alpha^i_n\ge 0.
$
We obtain for all $n\ge 1$,
\ben
\begin{split}
	(Z_n, \varphi)_{\mathcal H} & = \sum_{i=1}^n \alpha^i_n (Z^{y^n_i},  
	\varphi)_{\mathcal H}
       	 \le \max_{Z\in H(\Omega^\Gamma)}(Z,\varphi)_{\mathcal 
		 H} \sum_{i=1}^n \alpha^i_n
                               = 	\max_{Z\in H(\Omega^\Gamma)}(Z,\varphi)_{\mathcal 
		H}.
\end{split}
\een
Passing to the limit $n\rightarrow \infty$ shows
$
(\hat Z,\varphi)_{\mathcal H} \le \max_{Z\in H(\Omega^\Gamma)}(Z,\varphi)_{\mathcal 
		H}
$
for all $\hat Z\in \bar L(\Omega^\Gamma)$. 
Taking the supremum over $\hat Z$ and taking into account inequality 
\eqref{eq:H_le} finishes the proof. 
\end{proof}

\begin{lemma}
	The set $\Omega^\Gamma\in \Xi$ is a stationary point for $J_\infty(\cdot)$ in $\mathcal 
	H(D,\R^2)$, that is,
	$
       	dJ_\infty(\Omega^\Gamma)(X)\ge 0$ for all  $X\in \mathcal H(\Omega,\R^2)$
if and only if
$
0 \in \bar L(\Omega^\Gamma).
$
\end{lemma}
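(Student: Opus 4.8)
The plan is to exploit the reformulation of the Eulerian semi-derivative already established, namely
\[
dJ_\infty(\Omega^\Gamma)(X) = \max_{Z\in H(\Omega^\Gamma)}(Z,X)_{\mathcal H} = \max_{Z\in \bar L(\Omega^\Gamma)}(Z,X)_{\mathcal H},
\]
so that stationarity is equivalent to $\max_{Z\in \bar L(\Omega^\Gamma)}(Z,X)_{\mathcal H}\ge 0$ for every $X\in \mathcal H(\Omega,\R^2)$. The set $\bar L(\Omega^\Gamma)$ is, by construction, a nonempty closed convex subset of the Hilbert space $\mathcal H(\Omega,\R^2)$ (it is the closure of the convex hull of the bounded closed set $H(\Omega^\Gamma)$), which is exactly the setting in which Lemma~\ref{lem:hilbert_projection} applies.

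First I would prove the easy implication: if $0\in \bar L(\Omega^\Gamma)$, then for every $X$ we may take $Z=0$ in the maximum to obtain $dJ_\infty(\Omega^\Gamma)(X)=\max_{Z\in \bar L(\Omega^\Gamma)}(Z,X)_{\mathcal H}\ge (0,X)_{\mathcal H}=0$, so $\Omega^\Gamma$ is stationary.

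For the converse I would argue by contraposition: assume $0\notin \bar L(\Omega^\Gamma)$ and produce a descent direction. Let $Z^\ast := P_{\bar L(\Omega^\Gamma)}(0)$ be the projection of the origin onto $\bar L(\Omega^\Gamma)$, which exists and is unique by the last statement of Lemma~\ref{lem:hilbert_projection}; since $0\notin \bar L(\Omega^\Gamma)$ we have $Z^\ast\neq 0$. The variational characterisation $(ii)$ of that lemma, applied with $x_0=0$, reads $(0-Z^\ast,Z-Z^\ast)_{\mathcal H}\le 0$ for all $Z\in \bar L(\Omega^\Gamma)$, equivalently $(Z^\ast,Z)_{\mathcal H}\ge \|Z^\ast\|_{\mathcal H}^2>0$ for all $Z\in \bar L(\Omega^\Gamma)$. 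Choosing the test direction $X:=-Z^\ast\in \mathcal H(\Omega,\R^2)$ then gives
\[
dJ_\infty(\Omega^\Gamma)(-Z^\ast)=\max_{Z\in \bar L(\Omega^\Gamma)}(Z,-Z^\ast)_{\mathcal H}=-\min_{Z\in \bar L(\Omega^\Gamma)}(Z,Z^\ast)_{\mathcal H}\le -\|Z^\ast\|_{\mathcal H}^2<0,
\]
so $\Omega^\Gamma$ is not stationary. Combining the two implications yields the claimed equivalence.

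I do not expect a serious obstacle here: the only subtlety is making sure that $\bar L(\Omega^\Gamma)$ is genuinely closed and convex so that the projection lemma is applicable — convexity is immediate from the definition of $L(\Omega^\Gamma)$ as a set of convex combinations, and closedness holds because we explicitly passed to the closure (the boundedness and closedness of $H(\Omega^\Gamma)$ established above are what guarantee $\bar L(\Omega^\Gamma)$ is a well-behaved, in fact weakly compact, convex set, though only closedness is needed for the argument). Everything else is the standard Hilbert-space separation argument packaged through Lemma~\ref{lem:hilbert_projection}.
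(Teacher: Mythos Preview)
Your proposal is correct and follows essentially the same route as the paper: both arguments hinge on projecting the origin onto the closed convex set $\bar L(\Omega^\Gamma)$ via Lemma~\ref{lem:hilbert_projection} and using the resulting variational inequality to produce a descent direction $-Z^\ast$ when $0\notin\bar L(\Omega^\Gamma)$. Your treatment of the easy implication (plugging $Z=0$ into the maximum) is in fact slightly cleaner than the paper's, which phrases that direction as a contrapositive and carries an inessential hypothesis on where the descent direction lives.
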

\begin{proof}
	According to Lemma~\ref{lem:hilbert_projection}, we have 
	$0\not\in \bar L(\Omega^\Gamma)$ if and only if there exists $X_0\in \bar 
	L(\Omega^\Gamma)$, $X_0\ne 0$ satisfying
	$
       	(X_0,X_0)_{\mathcal H(\Omega,\R^d)} \ge 	(X_0,\varphi)_{\mathcal H(\Omega,\R^d)}$ for all $\varphi \in \bar L(\Omega^\Gamma). 
	$
	Thus $0\not\in \bar L(\Omega^\Gamma)$ implies
	$
       	-\|X_0\|^2_{\mathcal H(\Omega,\R^d)} \ge \max_{\varphi\in \bar L(\Omega^\Gamma)}(-X_0,\varphi)_{\mathcal H(\Omega,\R^d)} = dJ_\infty(\Omega^\Gamma)(-X_0)	
	$
        and hence $dJ_\infty(\Omega^\Gamma)(-X_0)<0$. Conversely if there exists 
	$X_0\in \bar L(\Omega^\Gamma)$, $X_0\ne 0$, such that $dJ_\infty(\Omega^\Gamma)(X_0)<0$, then
	$
       	(X_0,\varphi)_{\mathcal H(\Omega,\R^d)} \le dJ_\infty(\Omega^\Gamma)(X_0)<0$ for all  $\varphi\in \bar L(\Omega^\Gamma)
	$
	which can only be true if $0\not\in \bar L(\Omega^\Gamma)$. This finishes the proof. 	
\end{proof}

\begin{definition}
	We call $\gb\in\mathcal H(\Omega,\R^2)$ with $\|\gb\|_{\mathcal H}=1$ \emph{steepest descent direction} of $J_\infty(\cdot)$ at $\Omega$ if
	\ben
       	dJ_\infty(\Omega^\Gamma)(\gb) \le dJ_\infty(\Omega^\Gamma)(\varphi) \quad \text{ for all }\varphi\in \mathcal H(\Omega,\R^2) \text{ with } \|\varphi\|_{\mathcal H}=1.  
	\een
\end{definition}
\begin{remark}
	Since according \cite[Lemma 2.8]{HeineSturm16} the Eulerian semi-derivative $
	dJ_\infty(\Omega^\Gamma)(\cdot)$ is $1
	$-homogeneous we can restrict ourselves  to the unique sphere in the previous
	definition. 
\end{remark}

At this juncture let us introduce for $\Omega^\Gamma\in \Xi$ the function
\ben
\psi(\Omega^\Gamma) := 	\min_{\substack{X\in \mathcal H(\Omega,\R^2)\\ \|X\|_{\mathcal H}=1}}\max_{Z\in H(\Omega^\Gamma)}(Z,X)_{\mathcal H}. 
\een

\begin{lemma}\label{lem:minimum_steepest}
	Suppose that $\psi(\Omega^\Gamma)<0$. Then 
	\ben\label{eq:psi_le_0}
       	\psi(\Omega^\Gamma) = \max_{Z\in \bar L(\Omega^\Gamma)}\left(Z,-\frac{\hat Z}{\|\hat Z\|_{\mathcal H}}\right)_{\mathcal H} = - \|\hat Z\|_{\mathcal H},	
	\een
where $\hat Z\in \mathcal H$ solves the minimisation problem
$
\|\hat Z\|_{\mathcal H} = \min_{X\in \bar L(\Omega^\Gamma)}\|X\|_{\mathcal H}.	
$
\end{lemma}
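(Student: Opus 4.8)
The plan is to recognise $\hat Z$ as the metric projection $P_{\bar L(\Omega^\Gamma)}(0)$ of the origin onto the closed convex set $\bar L(\Omega^\Gamma)$, and then to squeeze $\psi(\Omega^\Gamma)$ between $-\|\hat Z\|_{\mathcal H}$ from above and below. First I would record that $\bar L(\Omega^\Gamma)$ is nonempty (it contains $H(\Omega^\Gamma)$, which is nonempty because $R(\Omega^\Gamma)\neq\emptyset$), closed by definition, convex as the closure of a convex hull, and bounded because $H(\Omega^\Gamma)$ is bounded. Hence Lemma~\ref{lem:hilbert_projection} applies with $H=\mathcal H(\Omega,\R^2)$, $K=\bar L(\Omega^\Gamma)$ and $x_0=0$: there is a unique $\hat Z\in\bar L(\Omega^\Gamma)$ realising $\|\hat Z\|_{\mathcal H}=\min_{X\in\bar L(\Omega^\Gamma)}\|X\|_{\mathcal H}$, and statement (ii) of that lemma (with $x^*=\hat Z$, $x_0=0$) reads $(\hat Z,Z)_{\mathcal H}\ge\|\hat Z\|_{\mathcal H}^2$ for all $Z\in\bar L(\Omega^\Gamma)$. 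Next I observe that the hypothesis $\psi(\Omega^\Gamma)<0$ forces $0\notin\bar L(\Omega^\Gamma)$: were $0\in\bar L(\Omega^\Gamma)$, then $\max_{Z\in\bar L(\Omega^\Gamma)}(Z,X)_{\mathcal H}\ge 0$ for every $X$, and by the identity $\max_{Z\in H(\Omega^\Gamma)}(Z,X)_{\mathcal H}=\max_{Z\in\bar L(\Omega^\Gamma)}(Z,X)_{\mathcal H}$ proved above this would give $\psi(\Omega^\Gamma)\ge 0$, a contradiction. Therefore $\|\hat Z\|_{\mathcal H}>0$ and the normalised field $X_0:=-\hat Z/\|\hat Z\|_{\mathcal H}$ is well defined with $\|X_0\|_{\mathcal H}=1$.

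For the upper bound I would plug $X_0$ into the inner maximum. Using again the replacement of the maximum over $H(\Omega^\Gamma)$ by the maximum over $\bar L(\Omega^\Gamma)$ and then the variational inequality, for every $Z\in\bar L(\Omega^\Gamma)$ one has $(Z,X_0)_{\mathcal H}=-\|\hat Z\|_{\mathcal H}^{-1}(Z,\hat Z)_{\mathcal H}\le-\|\hat Z\|_{\mathcal H}$, with equality at $Z=\hat Z$. Hence $\max_{Z\in\bar L(\Omega^\Gamma)}(Z,X_0)_{\mathcal H}=-\|\hat Z\|_{\mathcal H}$, which is precisely the middle term of the claimed identity, and since $X_0$ is admissible in the definition of $\psi(\Omega^\Gamma)$ this gives $\psi(\Omega^\Gamma)\le-\|\hat Z\|_{\mathcal H}$. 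For the reverse inequality I would test an arbitrary $X\in\mathcal H(\Omega,\R^2)$ with $\|X\|_{\mathcal H}=1$ against $\hat Z\in\bar L(\Omega^\Gamma)$: by the Cauchy--Schwarz inequality, $\max_{Z\in\bar L(\Omega^\Gamma)}(Z,X)_{\mathcal H}\ge(\hat Z,X)_{\mathcal H}\ge-\|\hat Z\|_{\mathcal H}$. Taking the infimum over all such $X$ yields $\psi(\Omega^\Gamma)\ge-\|\hat Z\|_{\mathcal H}$. Combining the two bounds gives $\psi(\Omega^\Gamma)=-\|\hat Z\|_{\mathcal H}=\max_{Z\in\bar L(\Omega^\Gamma)}(Z,-\hat Z/\|\hat Z\|_{\mathcal H})_{\mathcal H}$, which is \eqref{eq:psi_le_0}, and in passing the minimum defining $\psi(\Omega^\Gamma)$ is attained at $X_0$.

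There is no deep obstacle here; the single point that genuinely uses the hypothesis $\psi(\Omega^\Gamma)<0$, and so must be handled with care, is the non-vanishing of $\hat Z$, since the whole argument hinges on normalising $\hat Z$ to a unit descent direction. The remaining ingredients — closedness, boundedness and convexity of $\bar L(\Omega^\Gamma)$ needed to invoke Lemma~\ref{lem:hilbert_projection}, the identification of the minimiser's variational inequality, and the replacement of $H(\Omega^\Gamma)$ by $\bar L(\Omega^\Gamma)$ — are all already available from the results established just above.
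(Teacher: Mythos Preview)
Your proof is correct and follows essentially the same route as the paper: identify $\hat Z$ as the Hilbert-space projection of $0$ onto $\bar L(\Omega^\Gamma)$, use the variational inequality of Lemma~\ref{lem:hilbert_projection} to obtain the upper bound $\psi(\Omega^\Gamma)\le-\|\hat Z\|_{\mathcal H}$ via the test direction $-\hat Z/\|\hat Z\|_{\mathcal H}$, and use Cauchy--Schwarz with the element $\hat Z\in\bar L(\Omega^\Gamma)$ for the matching lower bound. Your write-up is in fact more careful than the paper's in justifying $\hat Z\neq 0$ from the hypothesis $\psi(\Omega^\Gamma)<0$ and in checking that $\bar L(\Omega^\Gamma)$ meets the hypotheses of the projection lemma.
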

\begin{proof}
	We apply Lemma~\ref{lem:hilbert_projection} with $H:=\mathcal H(\Omega,\R^2),$ $x_0=0$ and 
	$K=\bar L(\Omega^\Gamma)$. Hence we find $Z^*\ne 0$ in $\bar L(\Omega^\Gamma)$ with
	$ (Z,Z^*)_{\mathcal H} \ge (Z^*,Z^*)_{\mathcal H}$ for all $Z\in \bar L(\Omega^\Gamma).$
	Therefore for $\bar G:= Z^*/\|Z^*\|_{\mathcal H}$ it holds
	$ (\bar G, Z)_{\mathcal H} \le -\|Z^*\|_{\mathcal H}$ for all  $ Z\in \bar L(\Omega^\Gamma)	$
        which in turn implies 
	\ben\label{eq:le_L_Omega}
	\max_{Z\in \bar L(\Omega^\Gamma)} (Z^*,Z)_{\mathcal H} = 
	\left(Z^*,-\frac{Z^*}{\|Z^*\|_{\mathcal H}}\right)_{\mathcal H} = - \|Z^*\|_{\mathcal H}.
	\een
	This is already the second equality in \eqref{eq:psi_le_0}. 
	As for the second one we observe that Cauchy-Schwarz's inequalty shows 
	$(Z,Y)_{\mathcal H}\ge -\|Z\|_{\mathcal H}\|Y\|_{\mathcal H}$ for all 
	$Z,Y\in \mathcal H(\Omega,\R^2)$. 
	Hence we get
	$
       	-\|Z^*\|_{\mathcal H} \le (Z^*,G)_{\mathcal H} \le \max_{Z\in \bar L(\Omega^\Gamma)}(Z^*,G)_{\mathcal H}	
	$
for arbitrary $G\in \mathcal H(\Omega,\R^2)$ with $\|G\|_{\mathcal H}=1$. 
	So combining the previous inequality with  
	\eqref{eq:le_L_Omega} yields 
	the desired result. 
\end{proof}

\begin{lemma}\label{lem:func_l}
	The function 
	$
	l(G) := 	\max_{Z\in \bar 
		L(\Omega^\Gamma)}\left(Z,G\right)_{\mathcal H} 
	$ attains its minimum on the unit sphere in exactly one point.   
\end{lemma}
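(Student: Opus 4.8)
The plan is to prove existence and uniqueness of the minimiser separately, using Lemma~\ref{lem:minimum_steepest} for the former and the strict convexity of the Hilbert norm for the latter. Throughout we work in the situation where $\Omega^\Gamma$ is not a stationary point, i.e.\ $0\notin \bar L(\Omega^\Gamma)$, equivalently $\psi(\Omega^\Gamma)<0$; this hypothesis is essential, since when $0\in\bar L(\Omega^\Gamma)$ the minimum of $l$ on the unit sphere need not be unique. First I would record the elementary structural facts: $\bar L(\Omega^\Gamma)$ is closed, convex and bounded, say contained in the ball of radius $R$, so that $l(G)=\max_{Z\in\bar L(\Omega^\Gamma)}(Z,G)_{\mathcal H}$ is finite, $R$-Lipschitz, convex (a supremum of continuous linear functionals) and positively $1$-homogeneous. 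Existence of a minimiser and the value of the minimum then come for free from Lemma~\ref{lem:minimum_steepest}: writing $\hat Z$ for the projection of $0$ onto $\bar L(\Omega^\Gamma)$, the vector $\bar G:=-\hat Z/\|\hat Z\|_{\mathcal H}$ lies on the unit sphere and satisfies $l(\bar G)=\psi(\Omega^\Gamma)=-\|\hat Z\|_{\mathcal H}<0$, so the minimal value $m:=\psi(\Omega^\Gamma)$ is strictly negative and is attained.

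For uniqueness I would argue by contradiction. Suppose $G_1,G_2$ both lie on the unit sphere with $l(G_1)=l(G_2)=m$ and $G_1\neq G_2$, and set $\bar G:=\tfrac12(G_1+G_2)$. Convexity of $l$ gives $l(\bar G)\le\tfrac12 l(G_1)+\tfrac12 l(G_2)=m<0$, so in particular $\bar G\neq 0$. The parallelogram identity yields $\|\bar G\|_{\mathcal H}^2=\tfrac12\|G_1\|_{\mathcal H}^2+\tfrac12\|G_2\|_{\mathcal H}^2-\tfrac14\|G_1-G_2\|_{\mathcal H}^2=1-\tfrac14\|G_1-G_2\|_{\mathcal H}^2<1$. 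Hence $\tilde G:=\bar G/\|\bar G\|_{\mathcal H}$ belongs to the unit sphere, and by positive $1$-homogeneity of $l$, together with $0<\|\bar G\|_{\mathcal H}<1$ and $m<0$, we get $l(\tilde G)=l(\bar G)/\|\bar G\|_{\mathcal H}\le m/\|\bar G\|_{\mathcal H}<m$, which contradicts the minimality of $m$. Therefore $G_1=G_2$.

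I expect the main obstacle to be the role of the hypothesis $\psi(\Omega^\Gamma)<0$: the function $l$ is only convex, not strictly convex, so uniqueness is not a formal consequence of convex analysis and genuinely requires the Hilbert-space geometry; strict negativity of the minimal value is precisely what lets the nearly-optimal midpoint $\bar G$, which has norm strictly below one, be rescaled to a strictly better unit vector. A secondary point to be careful about is checking that $\bar L(\Omega^\Gamma)$ is bounded — this was established when $H(\Omega^\Gamma)$ was shown to be closed and bounded, and the convex hull together with its closure inherits the bound — so that $l$ is everywhere finite and continuous and the manipulations above are legitimate.
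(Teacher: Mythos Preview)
Your proof is correct. The paper itself gives no argument at all: it merely states that the proof of \cite[Hilfsatz 3.3.7, p.~63--64]{MR1088479} carries over to this setting. Your argument---convexity and positive $1$-homogeneity of $l$, combined with the strict convexity of the Hilbert norm via the parallelogram identity---is precisely the standard proof of such a statement and is almost certainly what the cited reference contains, so there is no meaningful methodological difference to discuss.

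One point worth highlighting: you correctly observe that the hypothesis $\psi(\Omega^\Gamma)<0$ (equivalently $0\notin\bar L(\Omega^\Gamma)$) is essential for uniqueness, even though the lemma as stated in the paper omits it. Indeed, if $0\in\bar L(\Omega^\Gamma)$ then $l\ge 0$ everywhere and the minimum value on the unit sphere is $0$, but the set $\{G:\|G\|_{\mathcal H}=1,\ l(G)=0\}$ can easily contain more than one point (for instance whenever $\bar L(\Omega^\Gamma)$ spans a proper subspace). This is not a gap in your argument---you flag it explicitly---but rather an implicit standing assumption in the paper, consistent with Theorem~\ref{eq:steepest_unique} where the lemma is applied.
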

\begin{proof}
    The proof of \cite[Hilfsatz 3.3.7. p.63-44]{MR1088479} applies also to our setting.
\end{proof}

In analogy to the case in which Eulerian semi-derivative is linear (see \cite[Lemma~2.10]{eigelsturm16}) we can prove the existence and uniqueness
of steepest descent directions in the nonlinear case. However, the computation is more involed than in the linear case.
\begin{theorem}\label{eq:steepest_unique}
	Let $\Omega^\Gamma\in \Xi$ and suppose $\psi(\Omega^\Gamma)<0$. Then there is a unique steepest descent 
	direction $\gb\in \mathcal H(\Omega,\R^2)$, $\|\gb\|_{\mathcal H}=1$, for $J_\infty(\cdot)$ at $\Omega^\Gamma$ given by 
$
\gb := -\frac{\hat Z}{\|\hat Z\|_{\mathcal H}}, 
$
where $\hat Z = P_{\bar L(\Omega^\Gamma)}(0)$ is the projection of $0$ ( in 
$\mathcal H(\Omega,\R^2)$) onto $\bar L(\Omega^\Gamma)$.
\end{theorem}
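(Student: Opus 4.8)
The plan is to combine the three lemmas already established in this subsection. First I would invoke Lemma~\ref{lem:hilbert_projection} with $H := \mathcal H(\Omega,\R^2)$, $x_0 = 0$ and the closed convex set $K := \bar L(\Omega^\Gamma)$: this gives the existence and uniqueness of the projection $\hat Z := P_{\bar L(\Omega^\Gamma)}(0)$, the point of $\bar L(\Omega^\Gamma)$ closest to the origin. The hypothesis $\psi(\Omega^\Gamma) < 0$ is the key nondegeneracy condition: by Lemma~\ref{lem:minimum_steepest} it forces $\hat Z \neq 0$, so that $\gb := -\hat Z/\|\hat Z\|_{\mathcal H}$ is a well-defined unit vector. (Equivalently, $\psi(\Omega^\Gamma)<0$ means $0 \notin \bar L(\Omega^\Gamma)$, by the stationary-point characterisation lemma just above.)

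Next I would show $\gb$ is a steepest descent direction. Using the identity $dJ_\infty(\Omega^\Gamma)(X) = \max_{Z \in \bar L(\Omega^\Gamma)}(Z,X)_{\mathcal H}$ (which is the rewriting of \eqref{eq:SD_adjoint} via $H(\Omega^\Gamma)$ and the lemma equating the max over $H(\Omega^\Gamma)$ with the max over $\bar L(\Omega^\Gamma)$), Lemma~\ref{lem:minimum_steepest} gives directly
\[
dJ_\infty(\Omega^\Gamma)(\gb) = \max_{Z \in \bar L(\Omega^\Gamma)}\left(Z, -\frac{\hat Z}{\|\hat Z\|_{\mathcal H}}\right)_{\mathcal H} = -\|\hat Z\|_{\mathcal H} = \psi(\Omega^\Gamma).
\]
On the other hand, for any competitor $\varphi \in \mathcal H(\Omega,\R^2)$ with $\|\varphi\|_{\mathcal H}=1$, the definition of $\psi(\Omega^\Gamma)$ as an infimum over the unit sphere gives $dJ_\infty(\Omega^\Gamma)(\varphi) = \max_{Z\in H(\Omega^\Gamma)}(Z,\varphi)_{\mathcal H} \ge \psi(\Omega^\Gamma) = dJ_\infty(\Omega^\Gamma)(\gb)$, which is exactly the defining inequality for a steepest descent direction.

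Finally, for uniqueness I would transfer to the function $l(G) := \max_{Z \in \bar L(\Omega^\Gamma)}(Z,G)_{\mathcal H}$. Since $dJ_\infty(\Omega^\Gamma)(G) = l(G)$ on the unit sphere, a steepest descent direction is precisely a minimiser of $l$ over $\{\|G\|_{\mathcal H}=1\}$; Lemma~\ref{lem:func_l} asserts this minimiser is unique, and we have just exhibited $\gb$ as one such minimiser, so it is \emph{the} steepest descent direction. The main obstacle is really packaged into Lemma~\ref{lem:func_l} (a strict-convexity argument on the support function of $\bar L(\Omega^\Gamma)$, inherited from \cite[Hilfsatz 3.3.7]{MR1088479}); given that lemma, the present theorem is a short assembly of Lemmas~\ref{lem:hilbert_projection}, \ref{lem:minimum_steepest} and~\ref{lem:func_l}, with $\psi(\Omega^\Gamma)<0$ ensuring $\hat Z\ne 0$ so that the normalisation makes sense.
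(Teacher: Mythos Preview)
Your proposal is correct and follows essentially the same route as the paper: the paper's proof is the single line ``Follows from Lemmas~\ref{lem:func_l} and~\ref{lem:minimum_steepest}'', and you have simply unpacked that line (with the projection from Lemma~\ref{lem:hilbert_projection} made explicit, which the paper buries inside the proof of Lemma~\ref{lem:minimum_steepest}). Your additional observation that $\psi(\Omega^\Gamma)<0$ forces $\hat Z\neq 0$ via $\psi(\Omega^\Gamma)=-\|\hat Z\|_{\mathcal H}$ is exactly the missing link the one-line proof assumes.
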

\begin{proof}
	Follows from Lemmas~\ref{lem:func_l} and \ref{lem:minimum_steepest}. 
\end{proof}
\subsection{$\eps$-stationary points}
In order to define stable numerical algorithms we introduce $\eps$-stationary 
points. 
For $\eps \ge 0$ we define
 \begin{align}
	 R_\eps(\Omega^\Gamma) &  := \{y\in \overbar{\Omega}:\; J_\infty(\Omega^\Gamma)-\Psi(y,u(\Omega,\Gamma,y)) \le \eps\}\\	
	H_\eps(\Omega^\Gamma)      &  := \{\nabla j(\Omega^{\Gamma,y}):\; y\in R_\eps(\Omega^\Gamma) \}
\end{align}
and the convex hull of $H_\eps(\Omega^\Gamma)$ is denoted
by $L_\eps(\Omega^\Gamma)$.  Let us introduce for $\eps\ge 0$,
$$
\psi_\eps(\Omega^\Gamma) := 	\min_{\substack{X\in \mathcal H(\Omega,\R^2)\\ 
		\|X\|_{\mathcal H}=1}}\max_{y\in R_\eps(\Omega^\Gamma)}(\nabla j(\Omega^{\Gamma,y}),X)_{\mathcal H}. $$
Analogously to steepest descent directions we introduce $\eps$-steepest 
descent directions. 
\begin{definition}
   \begin{itemize}
	   \item[(i)] We call $\Omega^\Gamma \in \Xi $ an $\eps$-stationary point for $J_\infty(\cdot)
		   $ with respect to perturbations in  $\mathcal H(\Omega,\R^2)$ if $\psi_\eps(\Omega^\Gamma)\ge 0$. 
	   \item[(ii)]  
	We call $\gb_\eps$ a $\eps$-\emph{steepest descent direction} for $J_\infty(\cdot)
	$ at $\Omega^\Gamma$ in $\mathcal H(\Omega,\R^2)$ if
	\ben
       	\max_{Z\in \bar L_\eps(\Omega^\Gamma)} (Z,\gb_\eps)_{\mathcal H} =   \min_{\substack{X\in \mathcal H(\Omega,\R^2)\\ \|X\|_{\mathcal H }=1}} \max_{Z\in \bar L_\eps(\Omega^\Gamma)} (Z,X)_{\mathcal H}.  
	\een
   \end{itemize}
\end{definition}

It is readily verified that $\Omega^\Gamma$ is an $\eps$-stationary point if 
and only if $0\in \bar L_\eps(\Omega^\Gamma)$. 
Moreoever, if 
$\psi_\eps(\Omega^\Gamma)<0$ there  is a  unique $\eps$-steepest
		descent direction at $\Omega^\Gamma$ in the space $\mathcal H(\Omega,\R^2)$ given by 
		$
		\gb=-\frac{\hat \gb_\eps}{\|\gb_\eps\|_{\mathcal H}},
		$
		where $\gb_\eps$ is the projection of $0$ onto 
		$\bar L_\eps(\Omega^\Gamma)$.

The crucial point of $\eps$-steepest descent directions $g_k$ is that they decrease 
the cost function $J_\infty(\cdot)$. Suppose that 
$
\psi_{\eps_k}(\Omega^\Gamma) < 0
$
and let $\gb_k :=\gb_k^\eps$ be the $\eps_k$-steepest descent direction. 
Then
\ben
dJ_\infty(\Omega^\Gamma)(\gb_k)\le \max_{Z\in \bar L_\eps(\Omega^\Gamma)} (Z,\gb_k)_{\mathcal H}<0
\een
and as a consequence
$J_\infty(\Phi_t^{\gb_k}(\Omega^\Gamma)) < J_\infty(\Omega^\Gamma)$ for sufficiently 
small $t$. 
The parameter $\eps$ is a sort of regularisation parameter and ensures that the 
steepest descent directions are not ``too local''.

\subsection{Discrete problems}\label{sec:discrete_problems}

\subsubsection*{Discretisation of the domain $\Omega$}
We assume that $\Omega$ is a polygonial set. 
Let  $\{\mathcal T_h\}_{h>0}$ denote a family of simplicial
triangulations $\mathcal T_h=\{K\}$ consisting 
of triangles $K$ such that 
$$
\overbar{\Omega} = \bigcup \limits_{K\in \mathcal T_h}  K, \quad \forall h >0.
$$
For every element $K\in \mathcal T_h$, $h(K)$ stands for the diameter of  $K$ 
and $\rho(K)$ for the diameter of the largest ball contained in $K$. The maximal 
diameter of all elements is denoted by $h$, i.e., 
$h:=\textnormal{max} \{h(K) \  | \ K\in \mathcal T_h\}.$ Each 
$K\in \mathcal T_h$ consists of three nodes and three edges and we denote the 
set of nodes and edges by 
$\mathcal N_h$ and $\mathcal E_h$, respectively. We assume that 
there exists a positive constant $\varrho >0$, independent of $h$,   such 
that 
$\frac{h(K)}{\rho(K)} \le \varrho$
holds for all elements $K \in \mathcal T_h$ and all $h>0$. 

\subsubsection*{Discrete $\eps$-steepest descent directions and the quadratic program}
In order to obtain an algorithm we select for fixed $\eps >0$ a finite subset 
$R^h_\eps(\Omega^\Gamma)\subset R_\eps(\Omega^\Gamma)$ of points. We use 
the triangulation of $\Omega$ as discretisation, that is,
\ben
R^h_\eps(\Omega^\Gamma) := R_\eps(\Omega^\Gamma) \cap \mathcal N_h = \{y_1,\ldots, y_{\Neh}\}. 
\een
We have $\#(R^h_\eps(\Omega))=\Neh. $
Let us set 
$H^h_\eps(\Omega) := \{\nabla j(\Omega^{\Gamma,y}):\; y\in R^h_\eps(\Omega)\}$ and denote by 
$L^h_\eps(\Omega)$ the convex hull of $H^h_\eps(\Omega)$.  
For $y\in R^h_\eps(\Omega)$ we introduce the vectors $X_k := \nabla j(\Omega^{\Gamma,y_k})$ 
and order them $\{X_1,\cdots, X_{\Neh}\}$. For simplicity set henceforth 
$N:=\Neh$ and keep in mind that $N$ depends on $\eps$ and $h$. In order to obtain steepest descent directions in $\mathcal H(\Omega,\R^2)$, we need 
to solve 
$
 \min_{X\in L^h_\eps(\Omega)}\|X\|_{\mathcal H}. 
$
Using the definition of $L^h_\eps(\Omega)$ we see that this task is equivalent to 
solving the quadratic problem 
\ben\label{eq:quadratic}
\min \sum_{k,l =1}^N \alpha_k \alpha_l (X_k, X_l)_{\mathcal H}\quad \text{ 
	subject to }\sum_{k=1}^N \alpha_{k} =1, \quad  \alpha_k \ge 0. 
\een
Defining $Q_N:= ((X_k,X_l)_{\mathcal H})_{l,k=1,\ldots,N}$, $G_N:=(1,\ldots, 1)$, 
$E_N := -I$, $g_N=-(1,\ldots, 1)^\top$ and
$\alpha = (\alpha_1,\ldots, \alpha_N)^\top$, problem \eqref{eq:quadratic} can
be written in the canonical form:
$
	\min_{\alpha }\;  Q_N\alpha \cdot \alpha
$ subject to $B_N\alpha =0 $ and  $E_N\alpha \le g_N. $
This quadratic problem is convex and thus admits a unique solution 
$\alpha^*=(\alpha^*_1,\ldots, \alpha_N^*)$. The $\eps$-steepest descent direction is given by
\ben\label{eq:eps_descent}
\gb_\eps^h := - \frac{\hat\gb^*_\eps}{\|\hat\gb^*_\eps\|_{\mathcal H}}, \quad \text{where 
}\quad \hat \gb^*_\eps := \sum_{k=1}^N \alpha^*_k \nabla j(\Omega^{\Gamma,y_k}).	
\een

\begin{remark}
It is clear that we are not obliged to use the triangulation of $\Omega$ to 
construct a discretisation for $R_\eps^h(\Omega^\Gamma)$, however,  it is 
advantageous from 
the practical point of view. 
\end{remark}
\section{Numerical realisation}\label{sec:numerics}
\subsection{Problem setting}
We consider two cost functions
\ben\label{eq:L2_Linfty}
J_\infty(\Omega) := \max_{x\in \overbar \Omega} |u(x)-u_d(x)|^2, \qquad J_2(\Omega) := \int_\Omega |u-u_d|^2\; dx, 
\een
where in either case $u$ is the solution of ($x=(x_1,x_2)$)
\ben\label{eq:test_state}
\begin{split}
	- \Delta u(x) + u(x) & = (2 \pi^2 +1)\sin(\pi x_1) \sin(\pi x_2) \quad   \text{ in } \Omega,\\
	u(x) & = 0 \quad  \text{ on } \partial \Omega. 
\end{split}
\een
Notice that we set $\Omega:= \Omega^\emptyset$ since  $\Gamma=\emptyset$. We now 
define $u_d(x):= \sin(\pi x_1) \sin(\pi x_2) $, such that by construction
$
\Omega_{opt}\in \text{argmin}J_2 
$
and 
$\Omega_{opt}\in \text{argmin}J_\infty
$
with $\Omega_{opt} := (0,1)\times (0,1)$. 
Indeed the unique solution of \eqref{eq:test_state} on 
$(0,1)\times (0,1)$ reads 
$u(x)=\sin(\pi x_1) \sin(\pi x_2)$ as can be readily verified. By the properties of the sinus function we see that also every other domain 
$\Omega_n := (2n, 2n+1)\times (2n, 2n+1)$, $n\in \Z$ is a global minimum of $J_\infty$ and 
$J_2$, respectively.

\subsubsection*{Finite element approximation}
Now we describe discretisations of $dJ_2(\Omega)$ and 
$dJ_\infty(\Omega)$.  
Let $V_h(\Omega)$, $h>0$, denote the usual $H^1(\Omega)$ conforming finite element 
space, that is,
\ben\label{eq:V_h_k}
 V_h( \Omega) := \{v\in C(\overbar{\Omega}):\; v_{|K}\in \mathcal P^1(K), 
	\;\forall  K\in \mathcal T_h \}.	
\een
By $\ac V_h(\Omega)$ we denote all function of the space $V_h(\Omega)$ that vanish on 
the boundary $\partial \Omega$. 
For 
each $y\in \overbar \Omega$ the finite element approximation 
$(u_{h},p_h^y)\in \ac V_h(\Omega)\times \ac V_h(\Omega)$ of state 
\eqref{eq:state} and adjoint state equation 
\eqref{eq:adjoint} reads,
\begin{align}
	\int_\Omega \nabla u_h \cdot \nabla \varphi + u_h \varphi\, dx  & =  \int_\Omega f  
	\varphi  \, dx  \quad \text{ for all } \varphi \in \ac V_h(\Omega) \label{eq:approx_state_adjoint1} \\ 
	\int_\Omega \nabla \varphi \cdot \nabla p_{y,h} +  \varphi p_{y,h} \, dx  & = - 2(u(y)-u_h(y))\varphi(y) \quad \text{ for all } \varphi \in \ac V_h(\Omega).\label{eq:approx_state_adjoint2}
\end{align}
With the discretised state and adjoint state equation the discretised version 
of \eqref{eq:SD_adjoint}, (where $\Psi(y,\zeta):= |\zeta - u_d(y)|^2$) reads
\ben\label{eq:vol_discrete}
dJ_\infty^{h}(\Omega)(X) = \max_{y\in R^h(\Omega)}\left(\int_{\Omega} 
	\Sb_1^{y,h}:\partial X + \Sb_0^{y,h}\cdot X \;dx  - X(y)\cdot \nabla u_d(y) 2(u_h(y)-u_d(y)) \right),	
\een
where for $y\in R^h_\eps(\Omega)$ we set $\Sb_1^{y,h} := \Sb_1(u_h,p_{y,h})$   
and $\Sb_0^{y,h} := \Sb_0(u_h,p_{y,h})$ with $\Sb_1,\Sb_2$ being defined in 
\eqref{eq:formula_S1},\eqref{eq:formula_S0} (with $\beta\equiv 1$).

The shape derivative of 
$
J_2(\Omega) = \int_\Omega|u-u_d|^2\; dx,	
$
subject to $u$ solves \eqref{eq:test_state}, in an open and bounded subset 
$\Omega\subset D$ in direction   $X\in \ac C^1(D,\R^2)$ (see 
\cite{sturm2015shape} for the computation) is given by
\ben\label{eq:dJ_2_cont}
dJ_2(\Omega)(X) = \int_\Omega \Tb_1(u,\hat p):\partial X + \Tb_0(u,\hat p)\cdot X \; dx,	
\een
where $\hat p$ solves the adjoint equation
\ben
\int_\Omega \nabla \hat p \cdot \nabla \varphi + \hat p \varphi \; dx = -\int_\Omega 2(u-u_d) \varphi \; dx 
\quad \text{ for all } \varphi\in \ac H^1(\Omega).
\een
The tensors $\Tb_1$ and $\Tb_2$ are given by 
$\Tb_1(u,\hat p)     :=    (|u-u_d|^2 -f \hat p  +  \nabla u 
	\cdot \nabla \hat p)I - (\nabla u \otimes \nabla \hat p + \nabla \hat p
	\otimes \nabla u)$
	and 
	$
\Tb_0(u,\hat p)  := - \nabla f \hat p - 2\nabla u_d (u-u_d).	
$
The discrete version of \eqref{eq:dJ_2_cont} reads 
\ben\label{eq:dJ_2_dis}
dJ_2^h(\Omega)(X) = \int_\Omega \Tb_1(u_h,\hat p_h):\partial X + \Tb_0(u_h,\hat p_h)\cdot X \; dx,	
\een
where the discrete state $u_h$ solves \eqref{eq:approx_state_adjoint1} and the 
discrete adjoint state $\hat p_h\in \ac V_h(\Omega)$ solves:
\ben
\int_\Omega \nabla \hat p_h \cdot \nabla \varphi + \hat p_h \varphi \; dx = -\int_\Omega 2(u_h-u_d) \varphi \; dx 
\quad \text{ for all } \varphi\in \ac V_h(\Omega).
\een

\subsubsection{Choice of the metric}
We run our numerical tests with two different metrics on the space 
$V_h(\Omega)\times V_h(\Omega)$, namely the $H^1$ metric and the 
Euclidean metric.  Let $v^1,\ldots, v^{2N}$ be a basis of $V_h(\Omega)\times V_h(\Omega)$ and 
$\alpha_i,\beta_i$ in $\R$, $i,j=1,2,\ldots, 2N$ and suppose
$
v=\sum_{i=1}^{2N} \alpha_i v^i$ 
and
$
 w= \sum_{i=1}^{2N} \beta_i v^i. 
$
The $H^1$ metrc and Eulcidean metric are defined by
$$
(v, w )_{H^1}:=  \sum_{i,j=1}^{2N}\alpha_i\beta_j M_{ij}, \quad \left(v, w \right)_{\mathcal V_h} := \sum_{i,j=1}^{2N}\alpha_i\beta_j \delta_{ij}, 
$$
where $M_{ij}$ is defined by 
$
 M_{ij} = \int_\Omega \partial v^i : \partial v^j + v^i \cdot v^j\; dx
$
and $\delta_{ij}$ denotes the 
Kronecker delta. 
 We denote by $\mathcal H_{\text{Sob}} $ and $\mathcal H_{\text{Euc}}$  
the space $V_h(\Omega)\times V_h(\Omega)$ equipped with the $H^1$ and 
Euclidean metric, respectively. 
Both spaces are kernel reproducing Hilbert spaces and thus the point evaluation is continuous; see 
\cite[Section 3]{eigelsturm16}.  

The approximated Eulerian semi-derivative \eqref{eq:vol_discrete}
can equivalently be written as:
\ben\label{eq:eulerian_FE}
\begin{split}
dJ_\infty^{h}(\Omega)(X) = \max_{y\in R^h(\Omega)} (\nabla^{\text{Euc}} 
j^h(\Omega^{y}),X)_{\mathcal H_{\text{Euc}}} = \max_{y\in R^h(\Omega)} (\nabla^{\text{Sob}} j^h(\Omega^{y}),X)_{\mathcal H_{\text{Sob}}},
\end{split}
\een
where for all $y\in R^h(\Omega)$ the gradient 
$\nabla^{\text{Sob}} j^h(\Omega^{y})$ is defined as the solution of
\ben\label{eq:gradient_j_omega}
(\nabla^{\text{Sob}} j^h(\Omega^{y}), \varphi)_{\mathcal H_{\text{Sob}}}  = \int_{\Omega} \Sb_1^{y,h}:\partial \varphi + \Sb_0^{y,h}\cdot \varphi \;dx  -     2\nabla u_d(y)\cdot \varphi(y) (u_h(y)-u_d(y)) 	
\een
for all 
$\varphi \in  V_h(\Omega) \times  V_h(\Omega) $. The gradient 
$\nabla^{\text{Euc}} j^h(\Omega^{y})$ is explicitly given by
\ben\label{eq:gradient_j_omega2}
\nabla^{\text{Euc}} j^h(\Omega^{y}) =  \sum_{k=1}^{2N} \left(\int_{\Omega} \Sb_1^{y,h}:\partial v^k + \Sb_0^{y,h}\cdot v^k \;dx  -     2\nabla u_d(y)\cdot v^k(y) (u_h(y)-u_d(y))\right)v^k. 
\een
We refer to \cite[Section 3]{eigelsturm16} for more details. The 
advantage of the Euclidean metric is that it does no require the solution of a 
variational problem but only the evaluation of the shape derivative 
$dj^h(\Omega^y)(v^j)$ at the basis elements $v^j$.

Similarly, the discretised shape derivative of $J_2$ can be written as
\ben
dJ_2^h(\Omega)(X) =  (\nabla^{\text{Euc}} 
J_2^h(\Omega),X)_{\mathcal H_{\text{Euc}}} = (\nabla^{\text{Sob}} J_2^h(\Omega^{y}),X)_{\mathcal H_{\text{Sob}}},
\een
where
\ben\label{eq:gradient_j_omega3}
(\nabla^{\text{Sob}} J_2^h(\Omega), \varphi)_{\mathcal H_{\text{Sob}}}  = 
\int_{\Omega} \Tb_1^{h}:\partial \varphi + \Tb_0^{h}\cdot \varphi \;dx  \quad 
\text{ for all } \varphi \in  V_h(\Omega) \times  V_h(\Omega) 	
\een
and 
\ben\label{eq:gradient_j_omega4}
\nabla^{\text{Euc}} J_2^h(\Omega) =  \sum_{k=1}^{2N} \left(\int_{\Omega} 
	\Tb_1^{h}:\partial v^k + \Tb_0^{h}\cdot v^k \;dx\right)v^k. 
\een

We will use the notation $J^h_2(\Omega):= \int_\Omega |u_h-u_d|^2\; dx$ and 
$J^h_\infty(\Omega) = \max_{x\in \overbar \Omega}|u_h(x)-u_d(x)|^2$. 

\subsection{Steepest descent algorithm}
The following algorithm uses the discretisation described in Section~\ref{sec:discrete_problems}. 

\begin{algorithm}[H]
    
    
    
     \KwData{Let $n=0$, $h>0$,$\gamma>0$ and $N\in \N$ be given. Initialise domain 
	     $\Omega_0\subset \R^2$. Let $N_2>0 $.
}     
     \While{ $n  \le  N $}{
                      1.) choose $\eps$, so that, $\#(R^h(\Omega_n))  \le \#(R^h_\eps(\Omega_n)) \le N_2 + \#(R^h(\Omega_n))$ \;
             2.) solve \eqref{eq:approx_state_adjoint1} to get $u_h$ and \eqref{eq:approx_state_adjoint2} for all $y\in R^h_\eps(\Omega_n)$ to obtain $p_{y,h}$\; 
	     3.) solve \eqref{eq:gradient_j_omega} to obtain  gradients
	     $\{\nabla j(\Omega^{y_1}),\ldots, \nabla j(\Omega^{y_{\Neh}})\}$\;
	     4.) solve quadratic program \eqref{eq:quadratic} to obtain $\gb_\eps^h$ 
	     defined in \eqref{eq:eps_descent}\;
	     5.) decrease $t$ until
	     \ben
          J_\infty^h( (\text{id}+t\gb_\eps^h)(\Omega_n)) < J^h_\infty(\Omega_n) 
          \een
           and set $\Omega_{n+1} := (\text{id}+t\gb_\eps^h)(\Omega_n) $  \;
          \eIf{ 
		  $J^h_\infty(\Omega_n) - J^h_\infty(\Omega_{n+1}) \ge \gamma (J^h_\infty(\Omega_0) - 
		  J^h_\infty(\Omega_1))$\;  }{increase   $n\rightarrow n+1$ and continue program\;  
               }{abort algorithm, no sufficient decrease}     
           }
    
     \caption{$\eps$-steepest descent algorithm} \label{algo:steepest}
    \end{algorithm}

\subsection{Numerical simulations}
The state equation, adjoint state equation and the shape derivative are discretised as described 
in \eqref{eq:approx_state_adjoint1},\eqref{eq:approx_state_adjoint2} and \eqref{eq:vol_discrete}, respectively. The domain 
$\Omega$ consists in each iteration of around 5500 nodes and we remesh in each 
iteration step. The boundary $\partial \Omega$ itself is discretised with a 
fixed number of 400 nodes which are moved during the optimisation process.  
As initial domain we choose a circle centered at $x=(0.5,0.5)$ with radius $r=\sqrt 6\approx 2.44$. 

In Figure~\ref{fig:steepest_Linfty} and Figure~\ref{fig:steepest_Linfty2} several iterations of Algorithm~\ref{algo:steepest}
applied to $J_\infty^h(\cdot)$ are displayed.  
The blue points indicate points of the triangulation
contained in $R^h_\eps(\Omega_n)$, where $n$ is the current iteration number. 
The number $N_2$ in Algorithm~\ref{algo:steepest} is chosen to be between $40$ 
and $80$. We did not perform a linesearch, that means, step four in the 
algorithm is replace by choosing a constant step size.  It can be seen that the optimal shape 
is quite good approximated using: (i) the $H^1$ metric in 
Figure~\ref{fig:steepest_Linfty} and (ii) the Euclidean metric in 
Figure~\ref{fig:steepest_Linfty2}. Even the corners are reconstructed 
quite well. Observe that
the points in $R^h_\eps(\Omega_n)$ are mostly distributed on the boundary 
$\partial \Omega$, so that for those points no adjoint equation has to be 
computed (cf. Corollary~\ref{cor:boundary}). 
In Figure~\ref{fig:steepest_L2} and Figure~\ref{fig:steepest_L22} we applied \cite[Algorithm 1]{eigelsturm16} to the cost function $J_2^h$. We use the same discretisation as before.

In Figure~\ref{eq:cost_log_plot_L2_Linfty_euclidean} the values $J_2^h(\Omega_n)$ 
over the number of iteration are plotted both in log scale.  For the dashed lines we used the 
$H^1$ metric and for the solid lines the Euclidean metric. 
 It makes sense to 
replace  $J_\infty(\Omega_n)$ by 
$J_2^h(\Omega_n)$ as a measure for the convergence rate
as the latter cost function can be estimated 
by $J_\infty(\Omega_n)$ using H\"older's inequality. 
We observe 
that the convergence rate in the 
 smooth case (minimising $J_2^h(\cdot)$) $J_2^h(\Omega_n)$ is slower than in the nonsmooth case (minimising $J_\infty^h(\cdot)$)  $J_2^h(\Omega_n)$.
 In the nonsmooth case the convergence rate even speeds up again in later iterations. That in the 
nonsmooth case corners do not perfectly match might have the reason that
the $\eps$ in our algorithm does not tend to zero in the end as we keep 
 $N_2\ge 40$. In fact it is difficult to find a reasonable condition to 
 decrease $\eps$. In the numerical practice it seems better to keep 
the number $N_2$ fixed in order to obtain a stable algorithm. 
The $H^1$ metric yields
smoother shapes than the Euclidean metric in general.

%

\begin{figure}
    \centering
\includegraphics[width=0.24\textwidth]{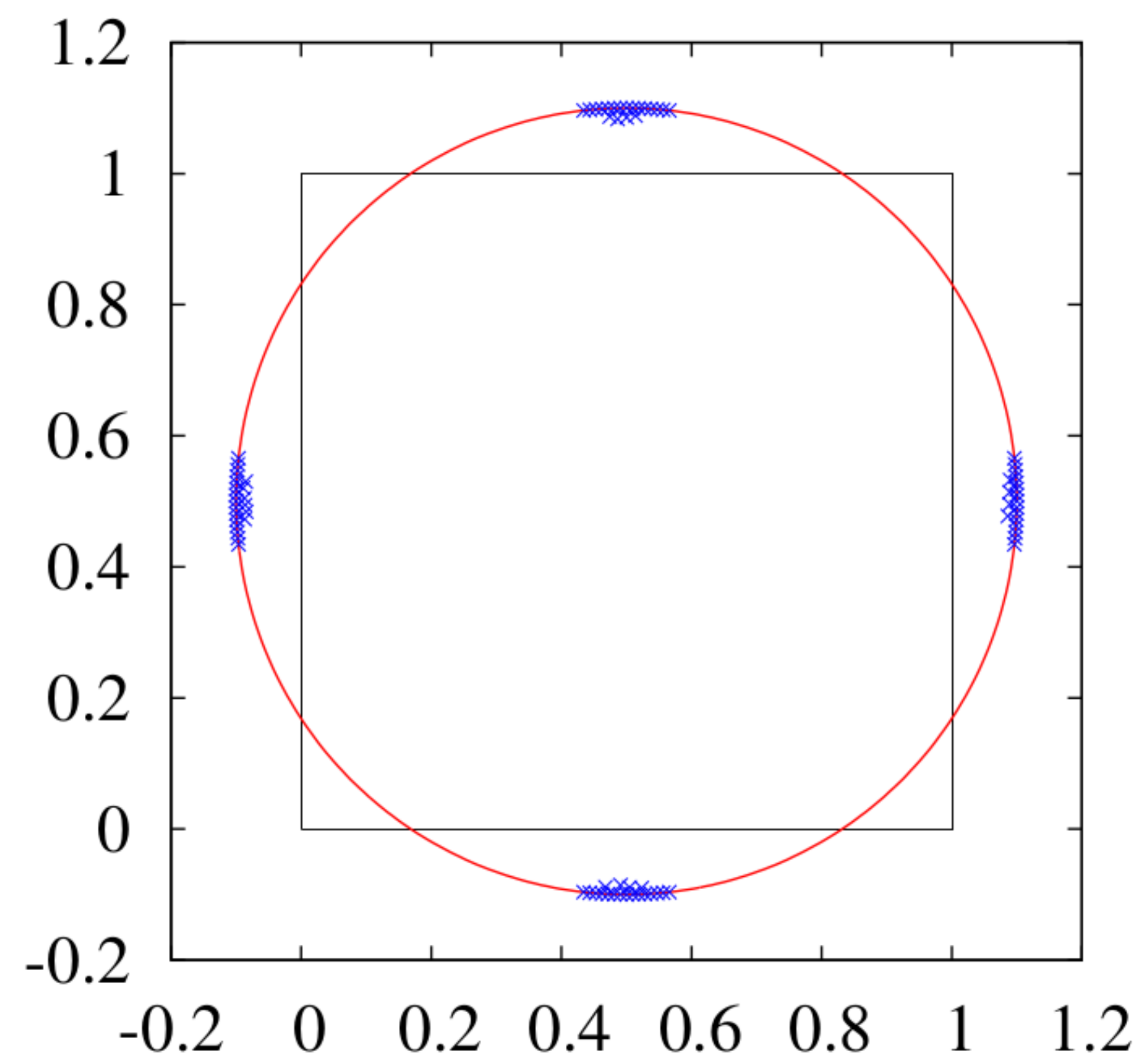}
\includegraphics[width=0.24\textwidth]{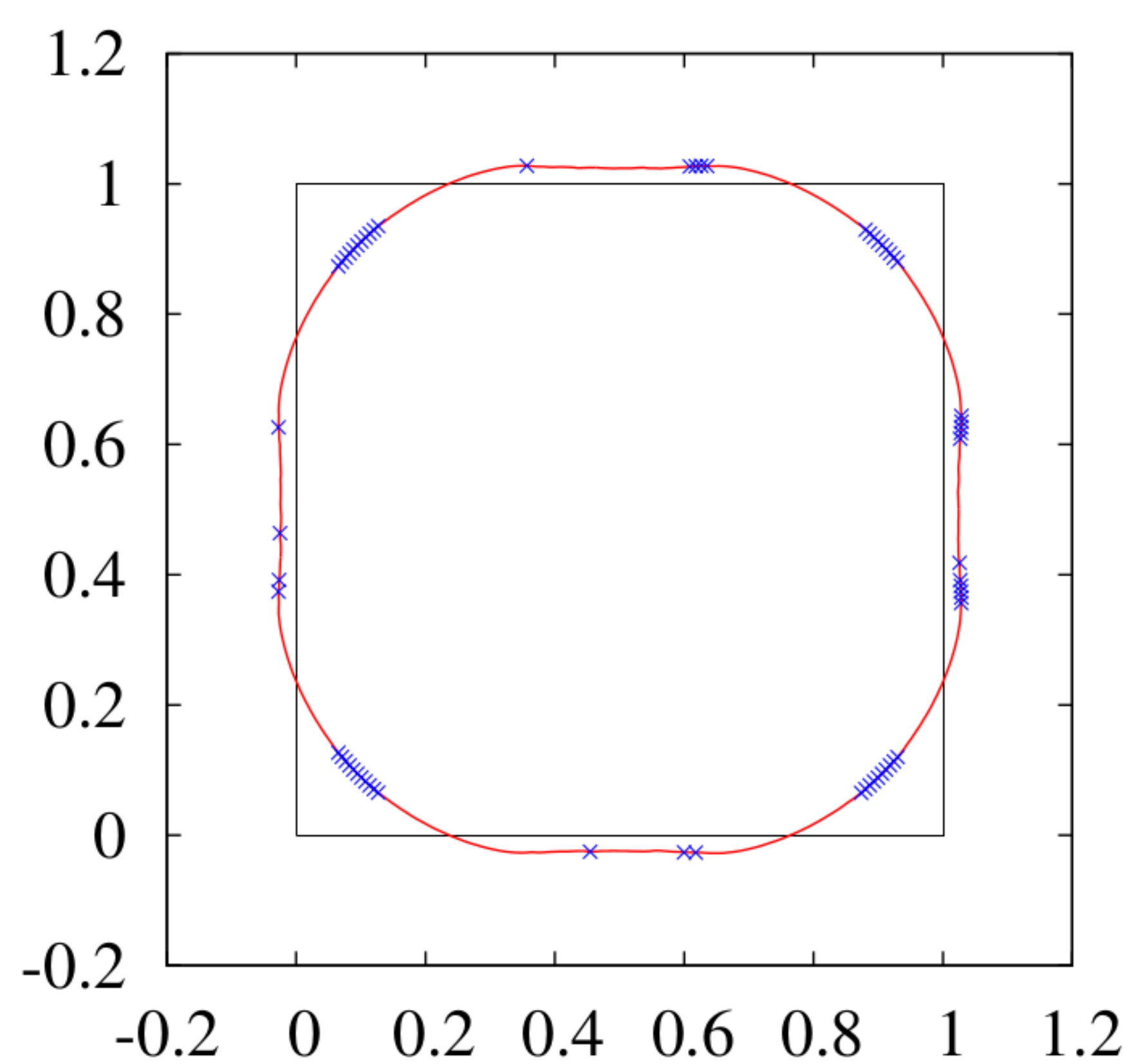}
\includegraphics[width=0.24\textwidth]{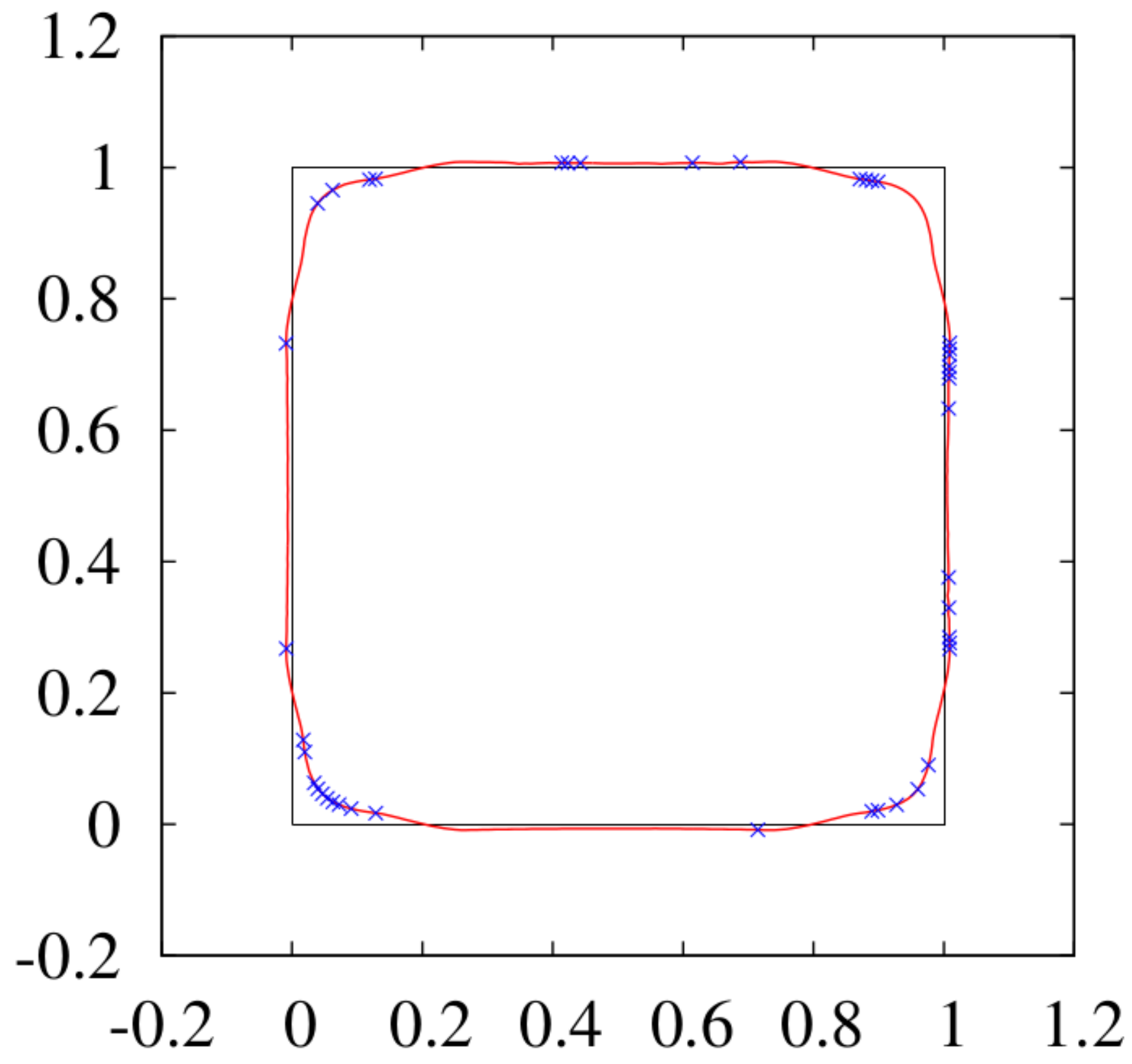}
\includegraphics[width=0.24\textwidth]{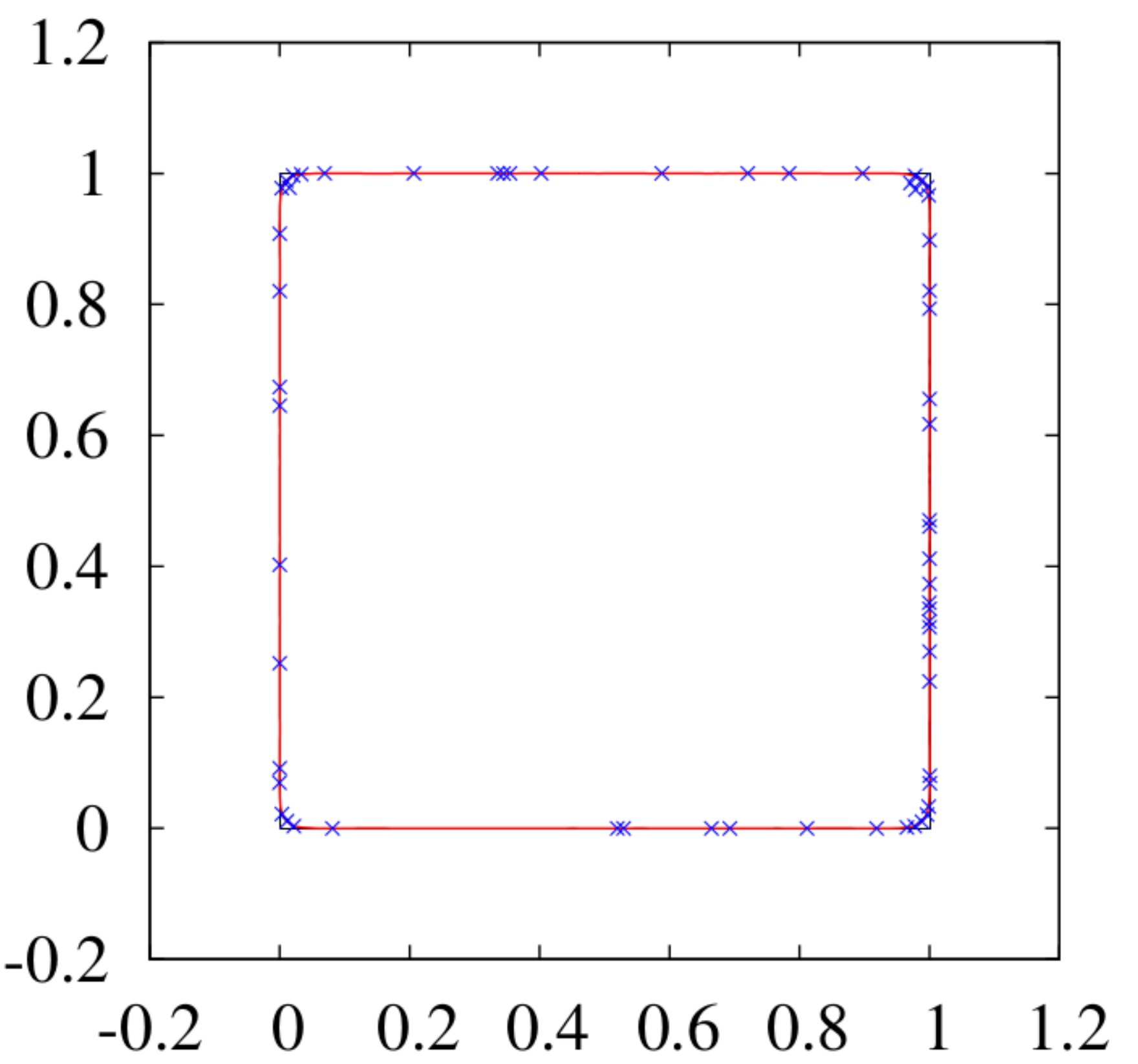}
\caption{Results for $J_\infty^h$ with $H^1$ metric; blue: 
	     points in $R_\eps^h(\Omega_n)$; red: boundary of shape $\partial \Omega_n$; from left to right: 
	initial shape, iteration 10, 120, 2000}
\label{fig:steepest_Linfty}
\end{figure}


\begin{figure}
    \centering
\includegraphics[width=0.24\textwidth]{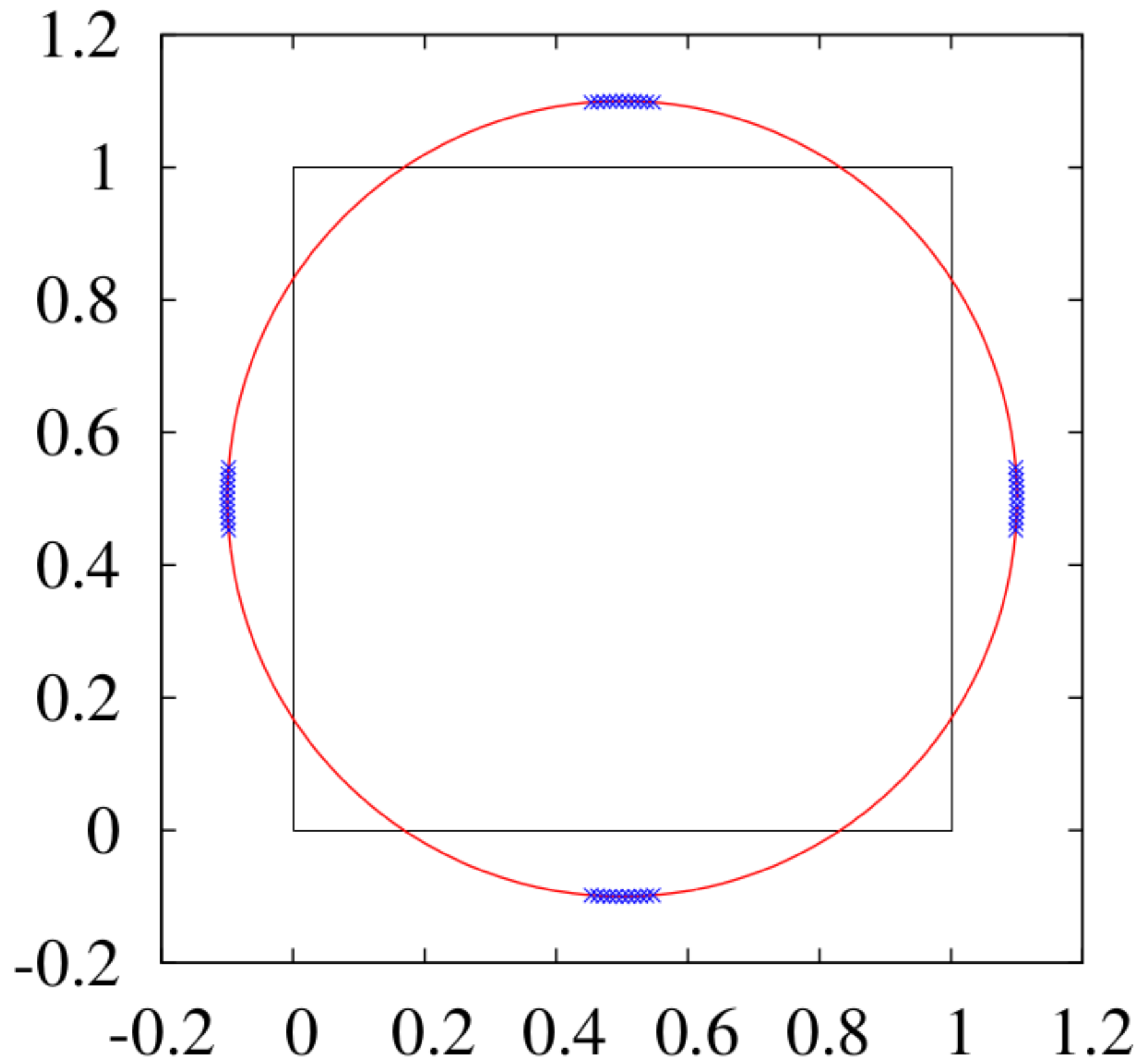}
\includegraphics[width=0.24\textwidth]{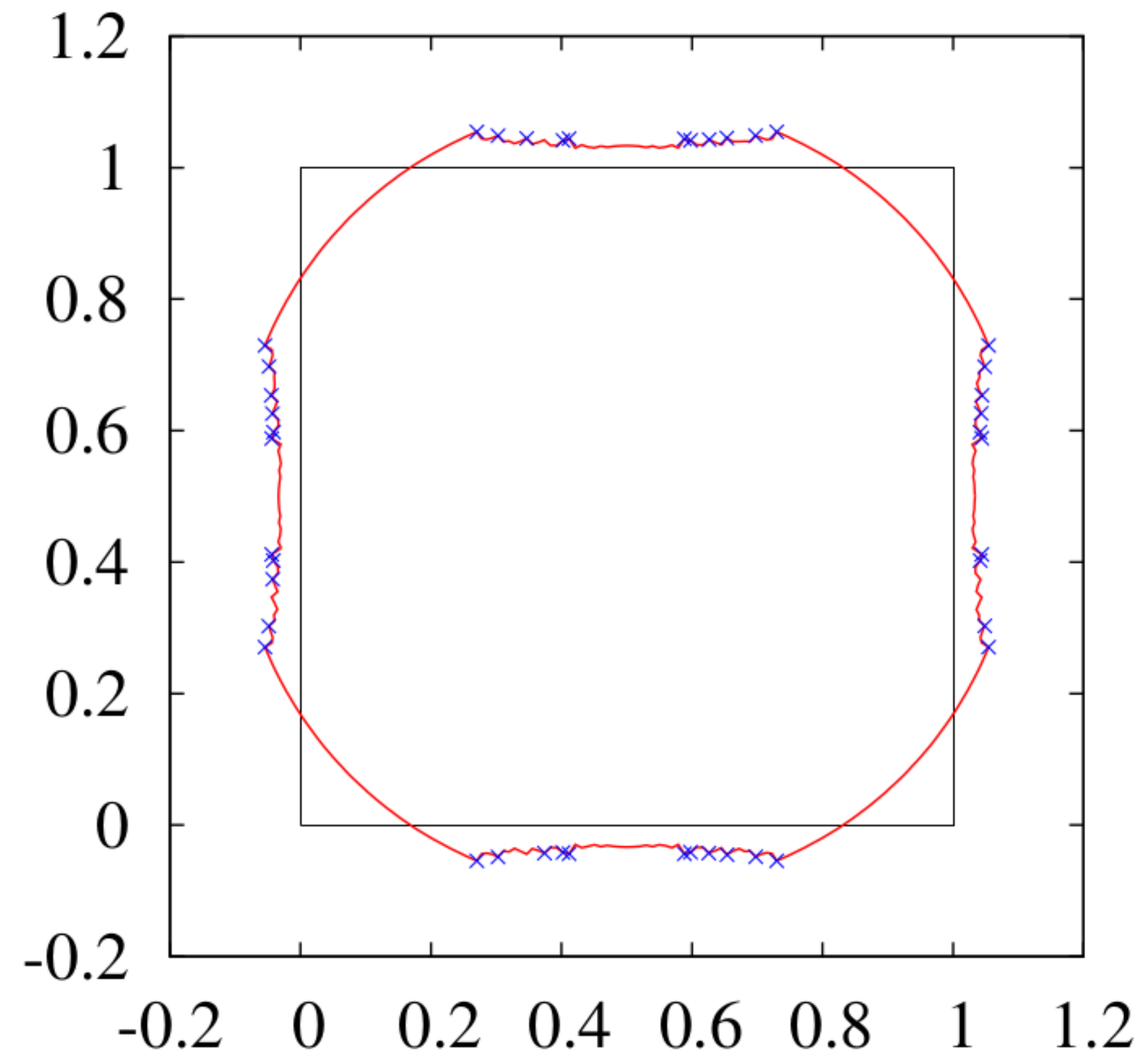}
\includegraphics[width=0.24\textwidth]{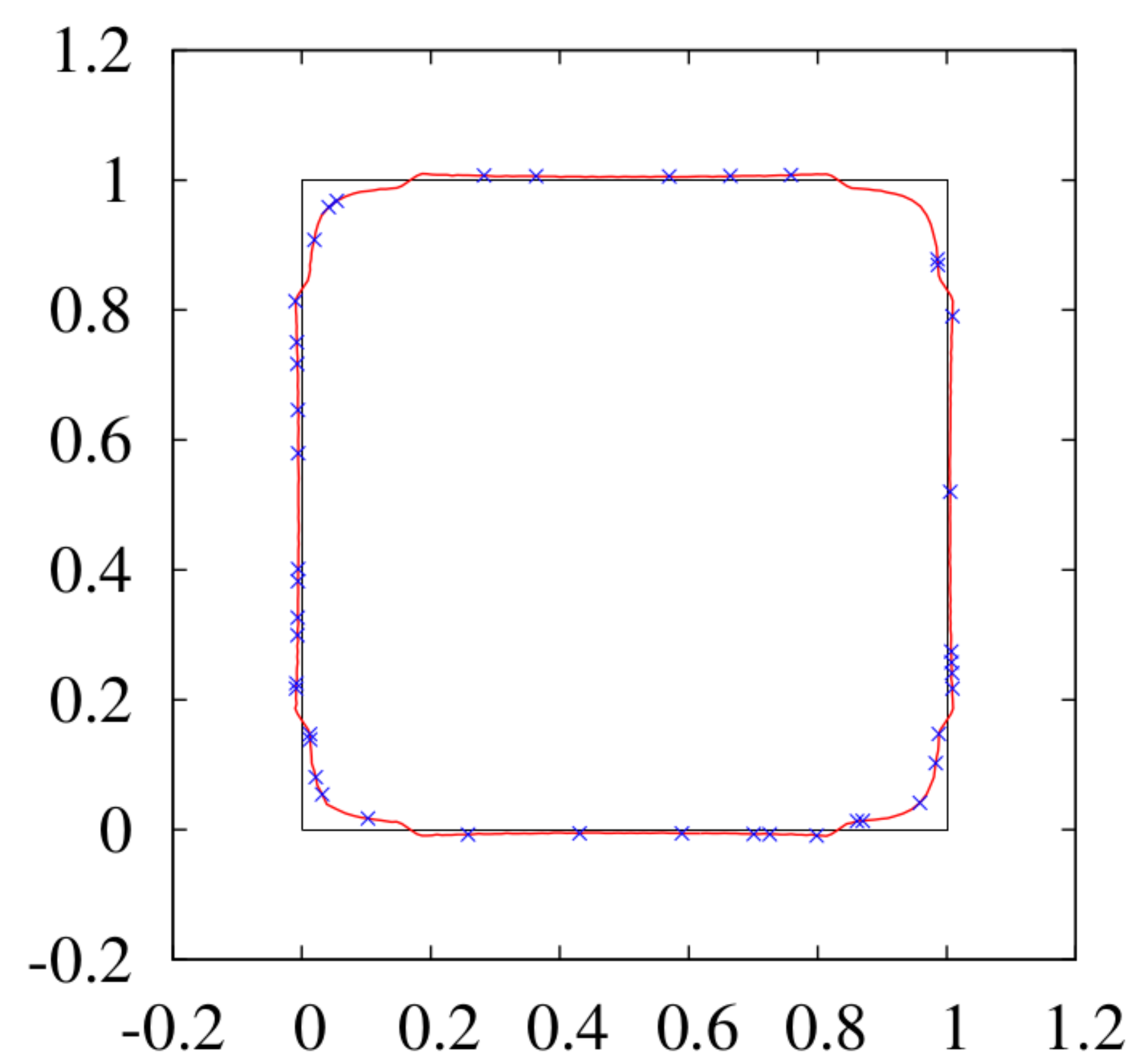}
\includegraphics[width=0.24\textwidth]{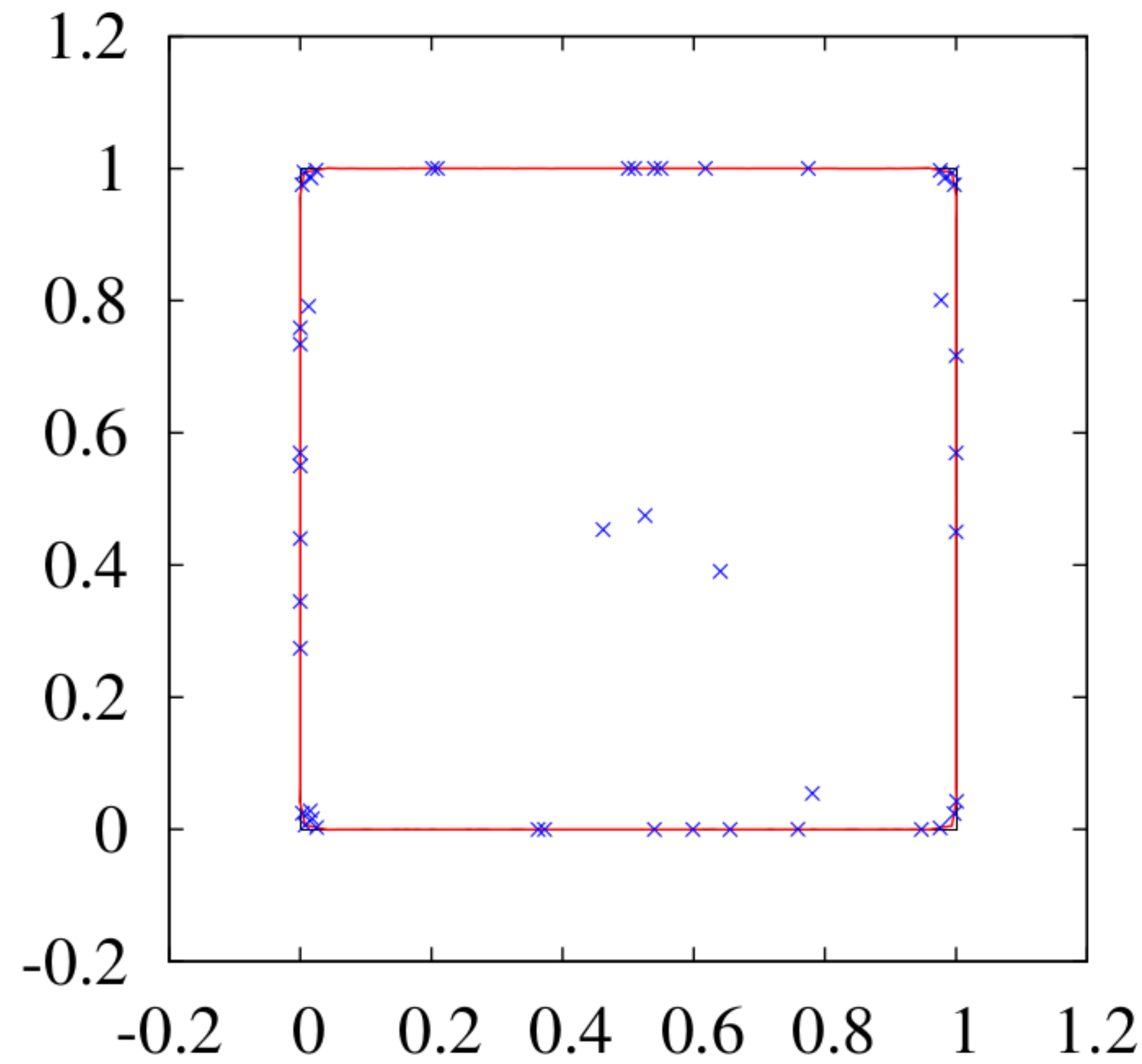}
\caption{Results for $J_\infty^h$ with Euclidean metric; blue: 
	     points in $R_\eps^h(\Omega_n)$; red: boundary of shape $\partial \Omega_n$; from left to right: 
	initial shape, iteration 10, 100, 2000}
\label{fig:steepest_Linfty2}
\end{figure}


 \begin{figure}
    \centering
\includegraphics[width=0.24\textwidth]{./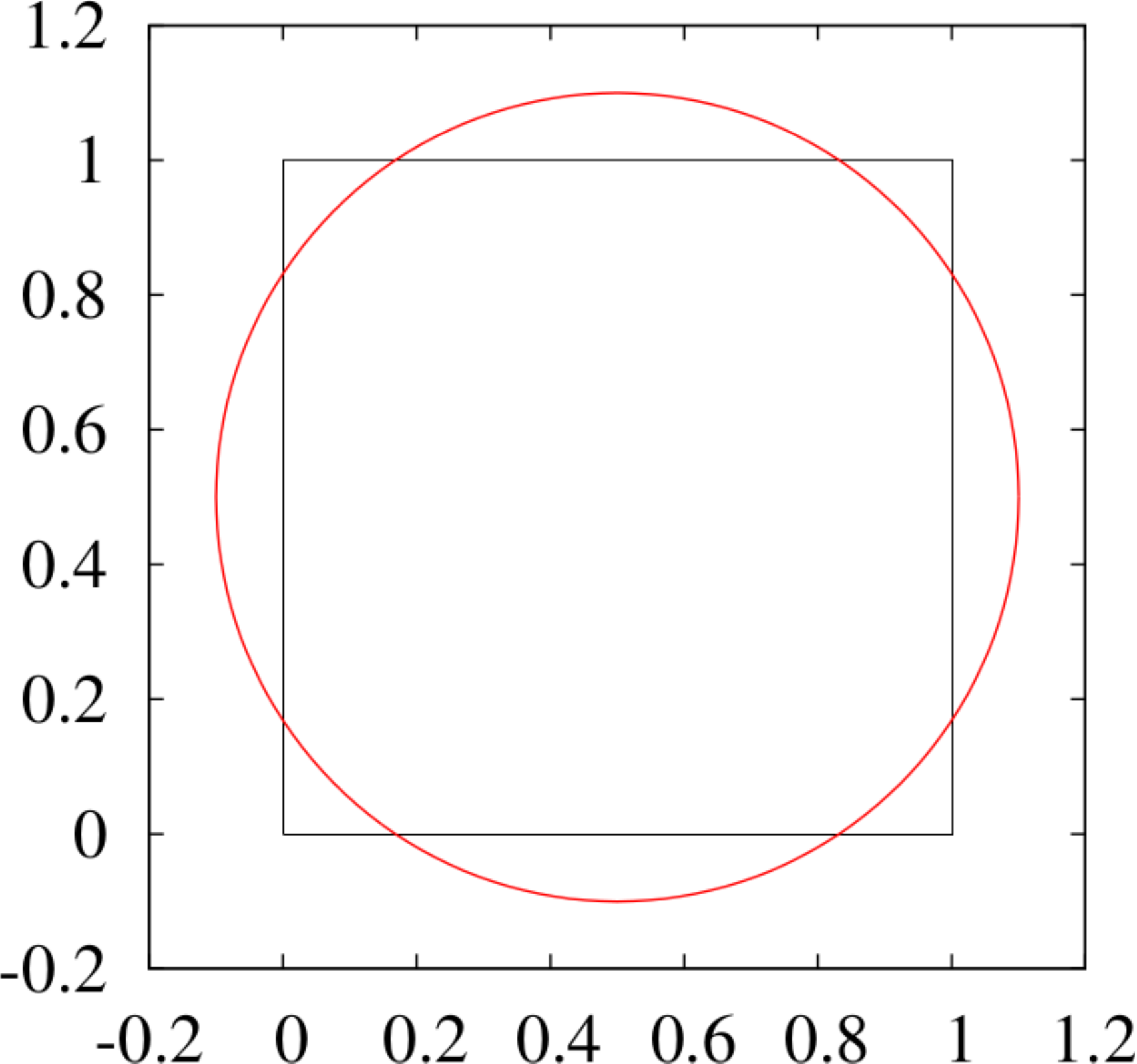}
\includegraphics[width=0.24\textwidth]{./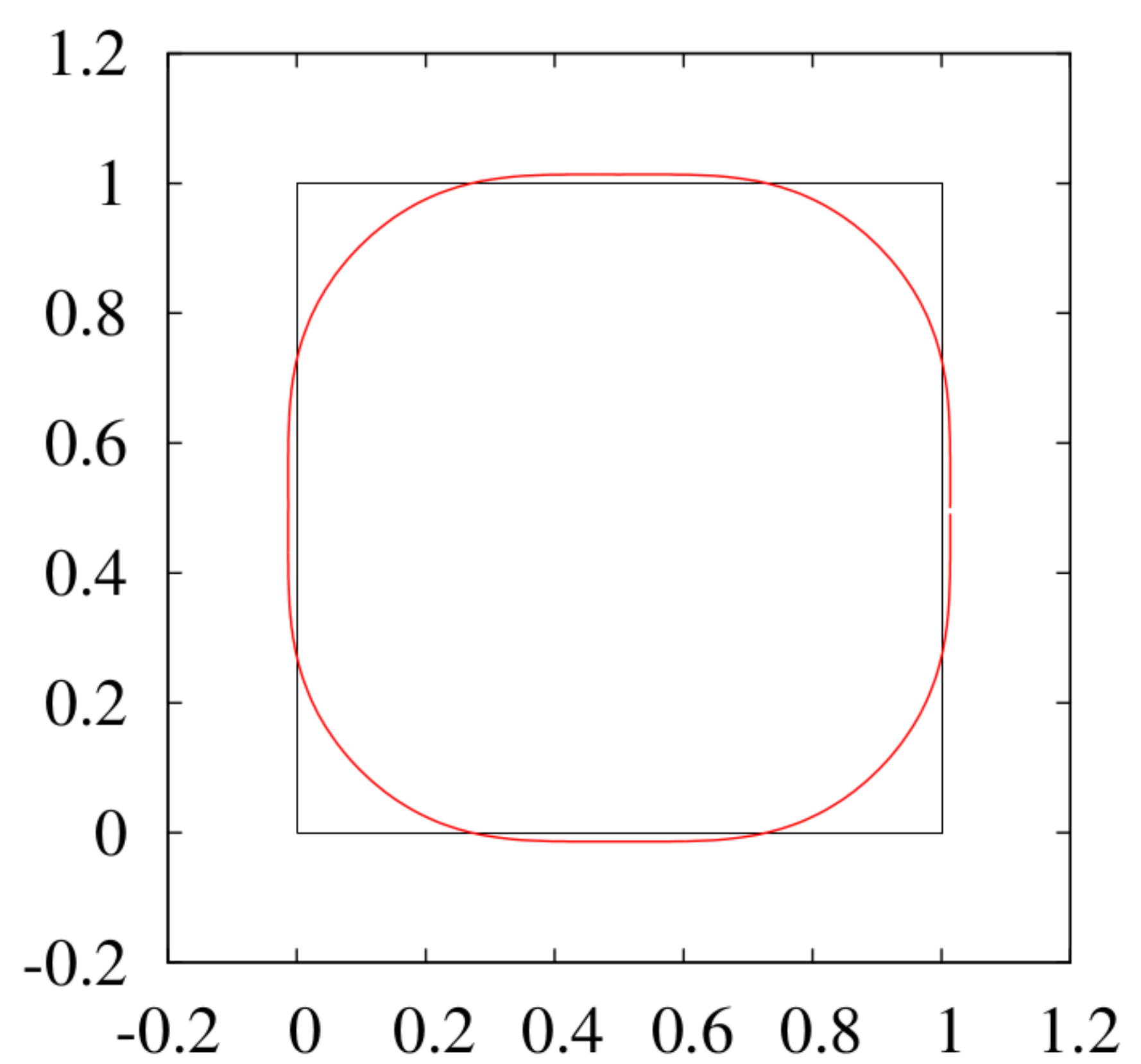}
\includegraphics[width=0.24\textwidth]{./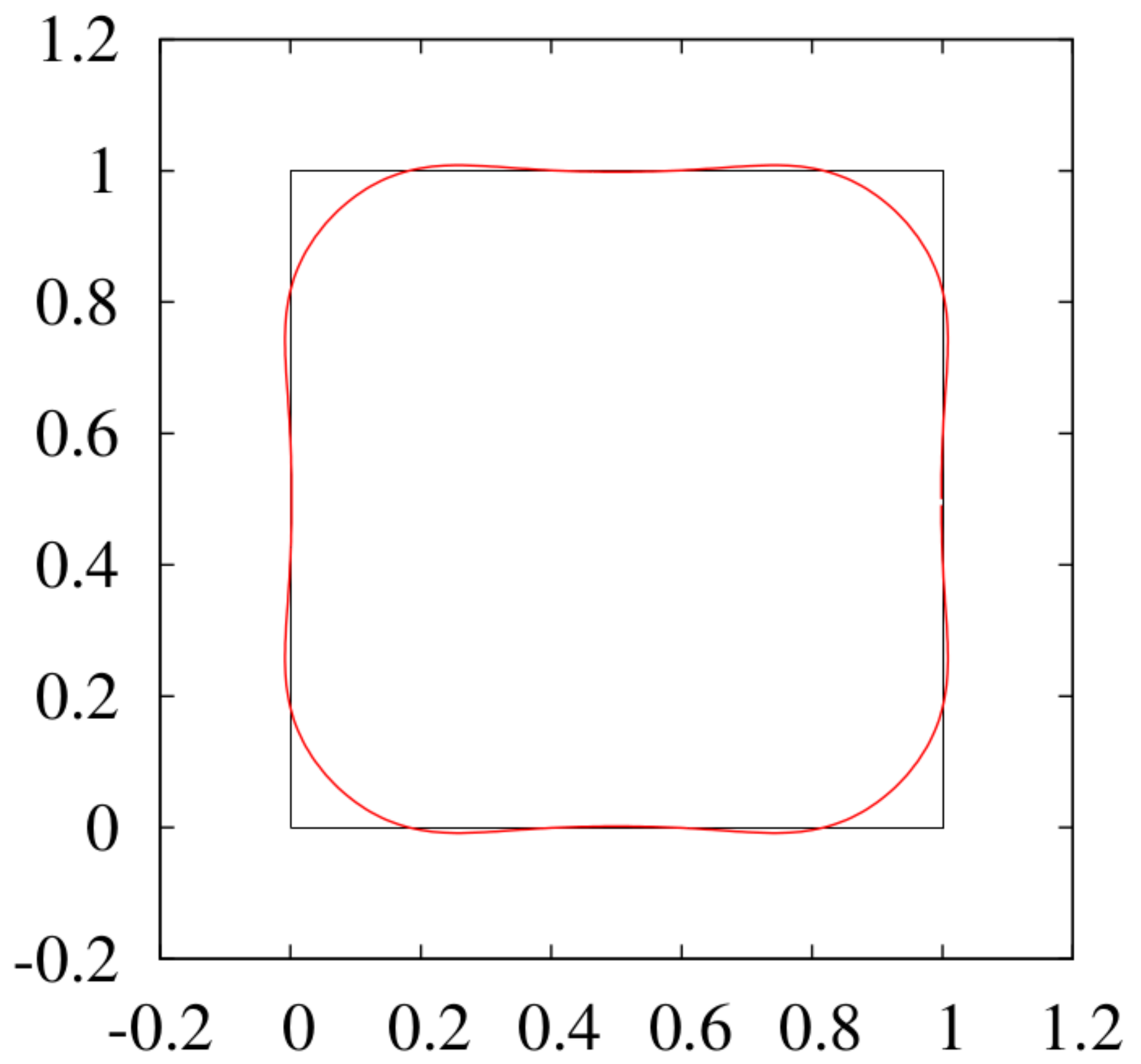}
\includegraphics[width=0.24\textwidth]{./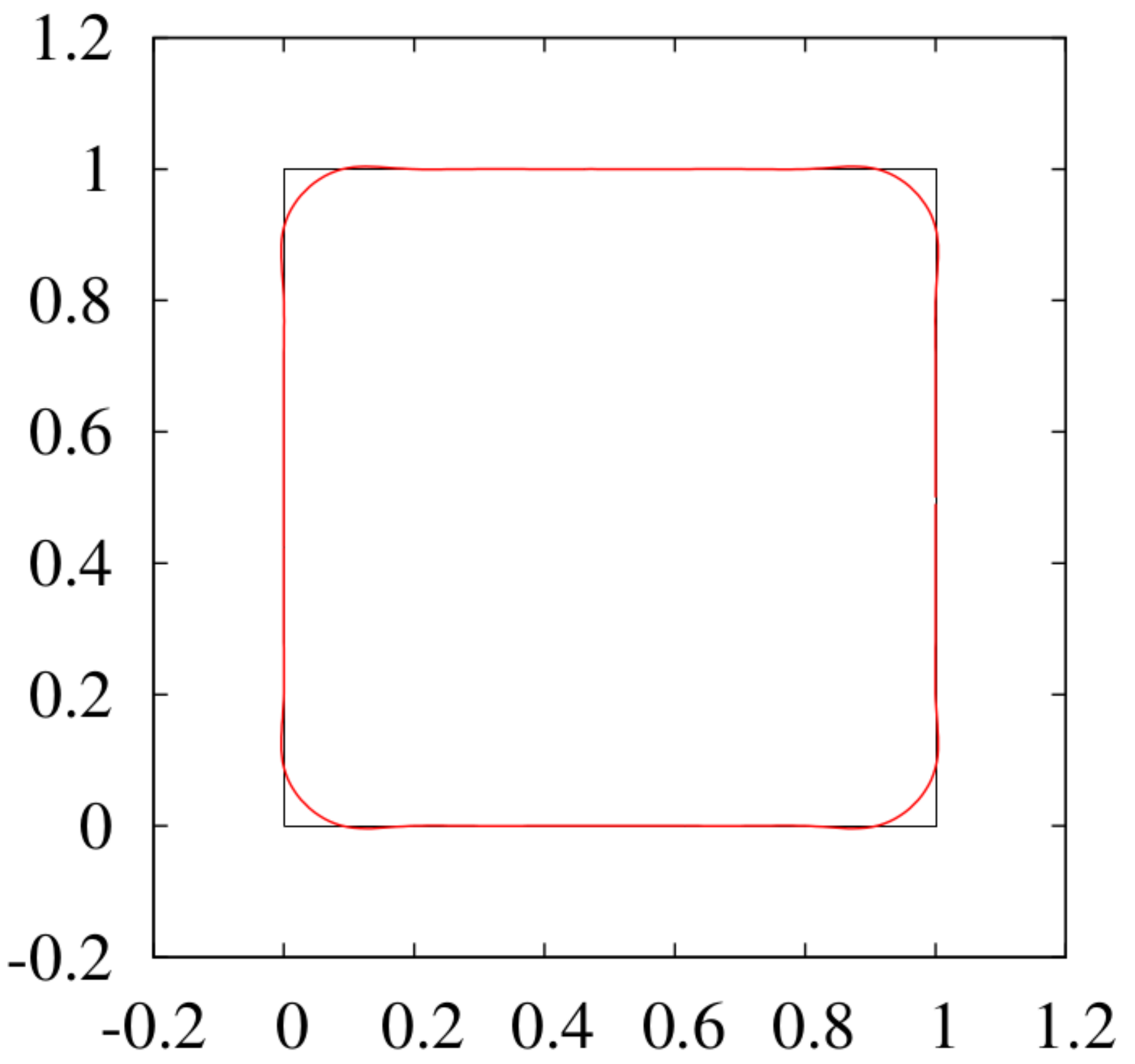}
\caption{Results for $J_2^h$ with $H^1$ metric; from left to right: 
	initial shape, iteration 20, 120, 2000}
\label{fig:steepest_L2}
\end{figure}


 \begin{figure}
    \centering
\includegraphics[width=0.24\textwidth]{./plots/L2_init.pdf}
\includegraphics[width=0.24\textwidth]{./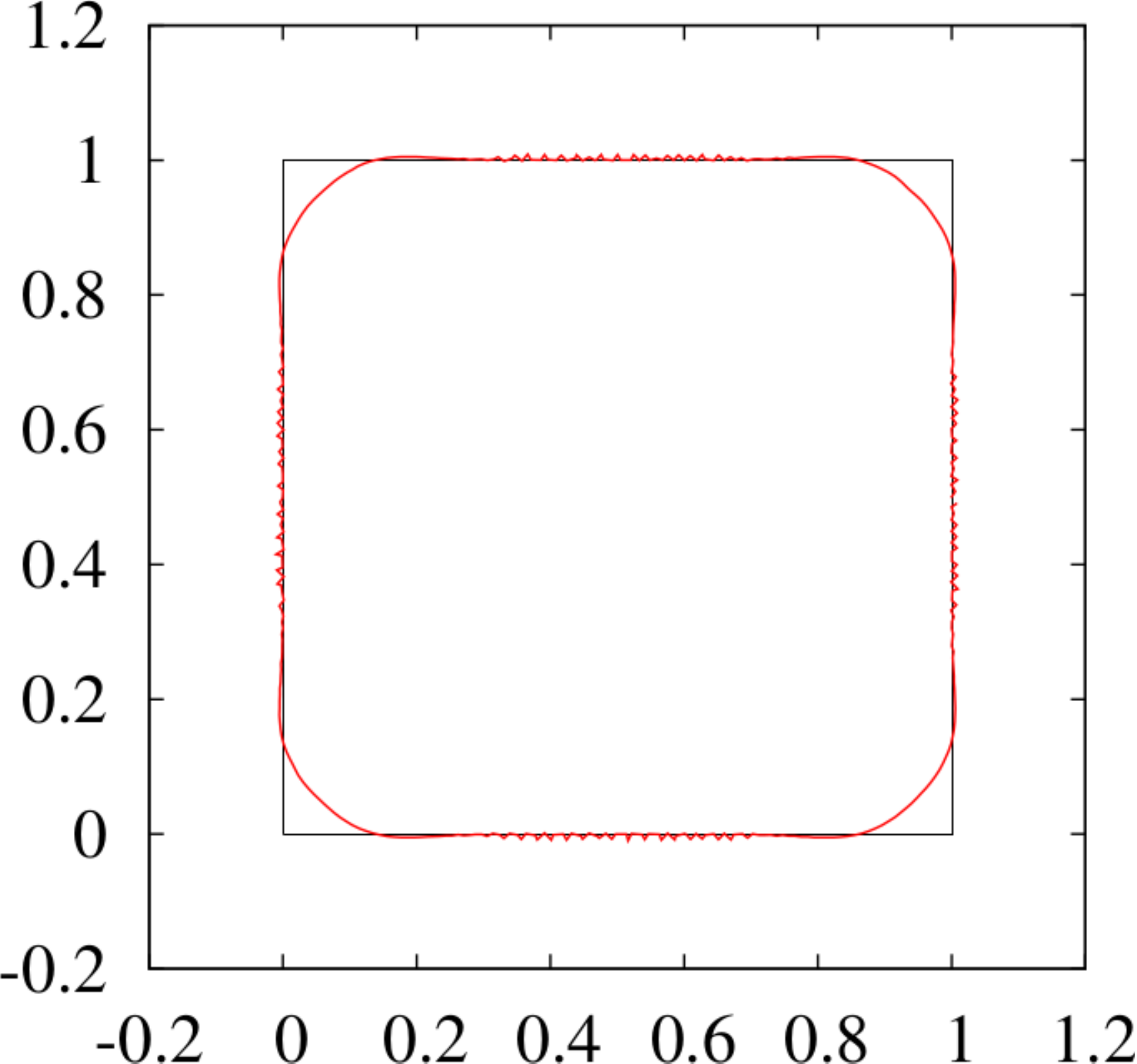}
\includegraphics[width=0.24\textwidth]{./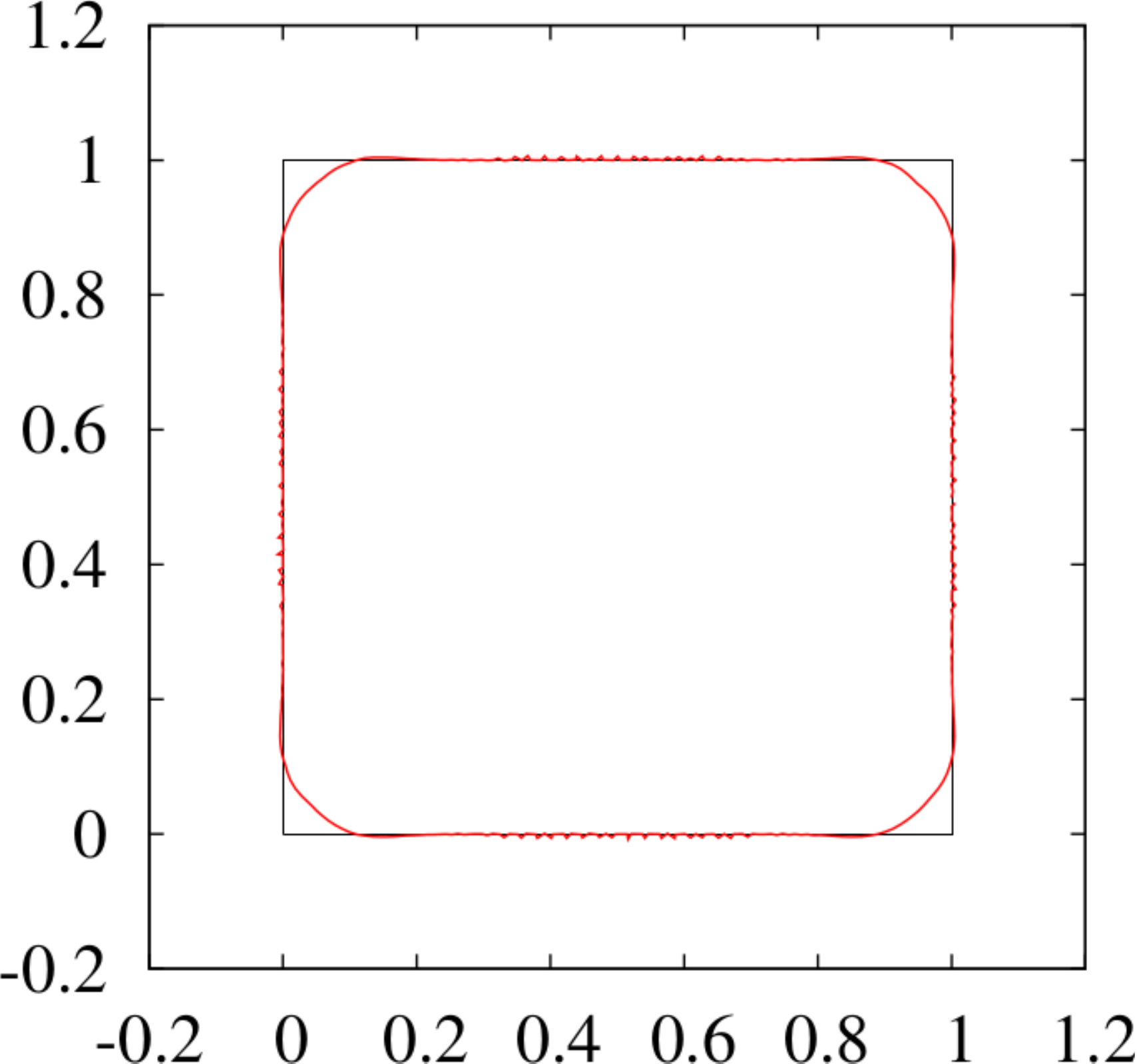}
\includegraphics[width=0.24\textwidth]{./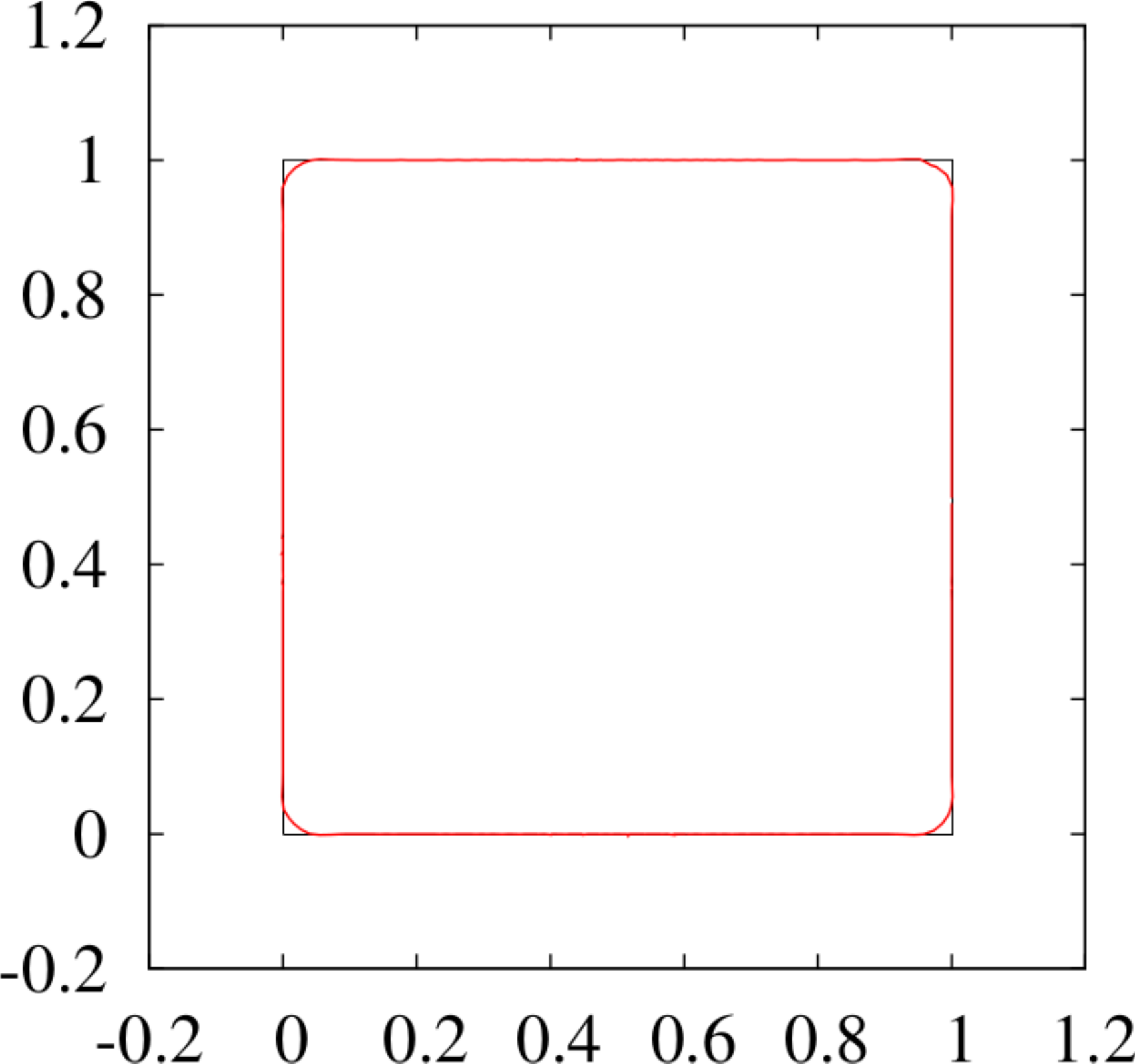}
\caption{Results for $J_2^h$ with Euclidean metric; from left to right: 
	initial shape, iteration 60, 120, 2000}
\label{fig:steepest_L22}
\end{figure}

Notice that to compute one descent direction for $J^h_\infty$ we have to solve at least one state equation,
$\#(R^h_\eps(\Omega_n))-\#(\Gamma_0)$ adjoint state equations and $\#(R^h_\eps(\Omega_n))$ gradient equations $\nabla j(\Omega^{y}_n)$. However the computation is \emph{perfectly 
	parallel}, that means, the computation of the adjoints and gradients 
can be parallized. Another possibility to reduce the computational cost is to 
use the boundary expression \eqref{eq:boundary_dJ_infty}, but the 
accuracy of this expression is lower than the domain expression \eqref{eq:SD_adjoint}.
In fact after discretisation \eqref{eq:boundary_dJ_infty} and  \eqref{eq:SD_adjoint}
are not equivalent anymore; cf. \cite{eigelsturm16}.
In contrast, to compute a steepest descent direction for $J_2^h$ only 
one state equation, one adjoint equation and one shape gradient has to be 
computed.


\begin{figure}[H]
    \centering
    \begin{subfigure}{.38\textwidth}
        \centering
	\scalebox{0.27}{ \includegraphics{./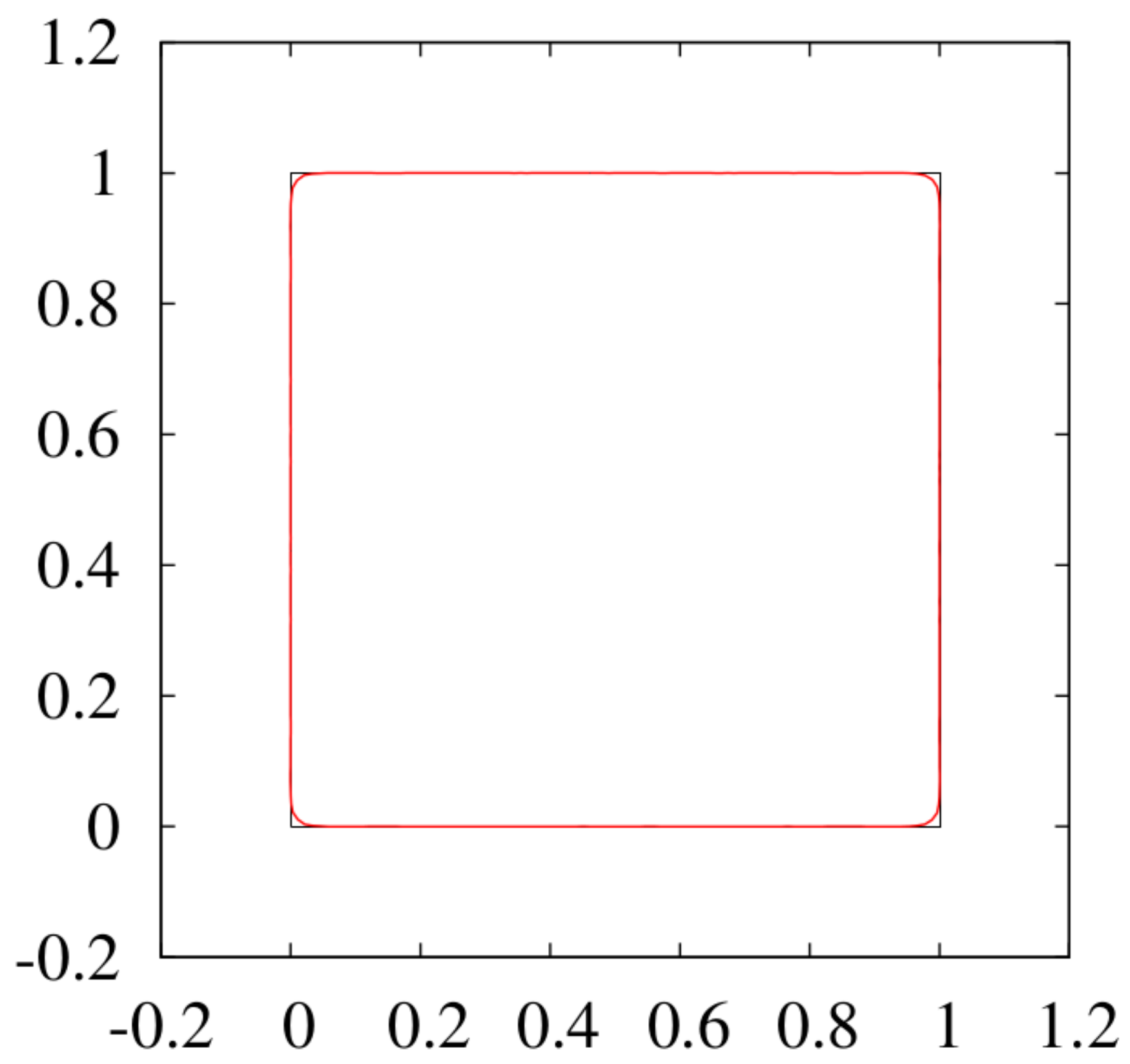}}
	\caption{$J_\infty^h$ with $H^1$ metric}
    \end{subfigure}%
    \begin{subfigure}{.38\textwidth}
        \centering
	\scalebox{0.27}{\includegraphics{./plots/iter_L2_2000_euclid.pdf}}
	\caption{$J_2^h$ with Euclidean metric}
     \end{subfigure}
    \begin{subfigure}{.38\textwidth}
        \centering
	\scalebox{0.27}{ \includegraphics{./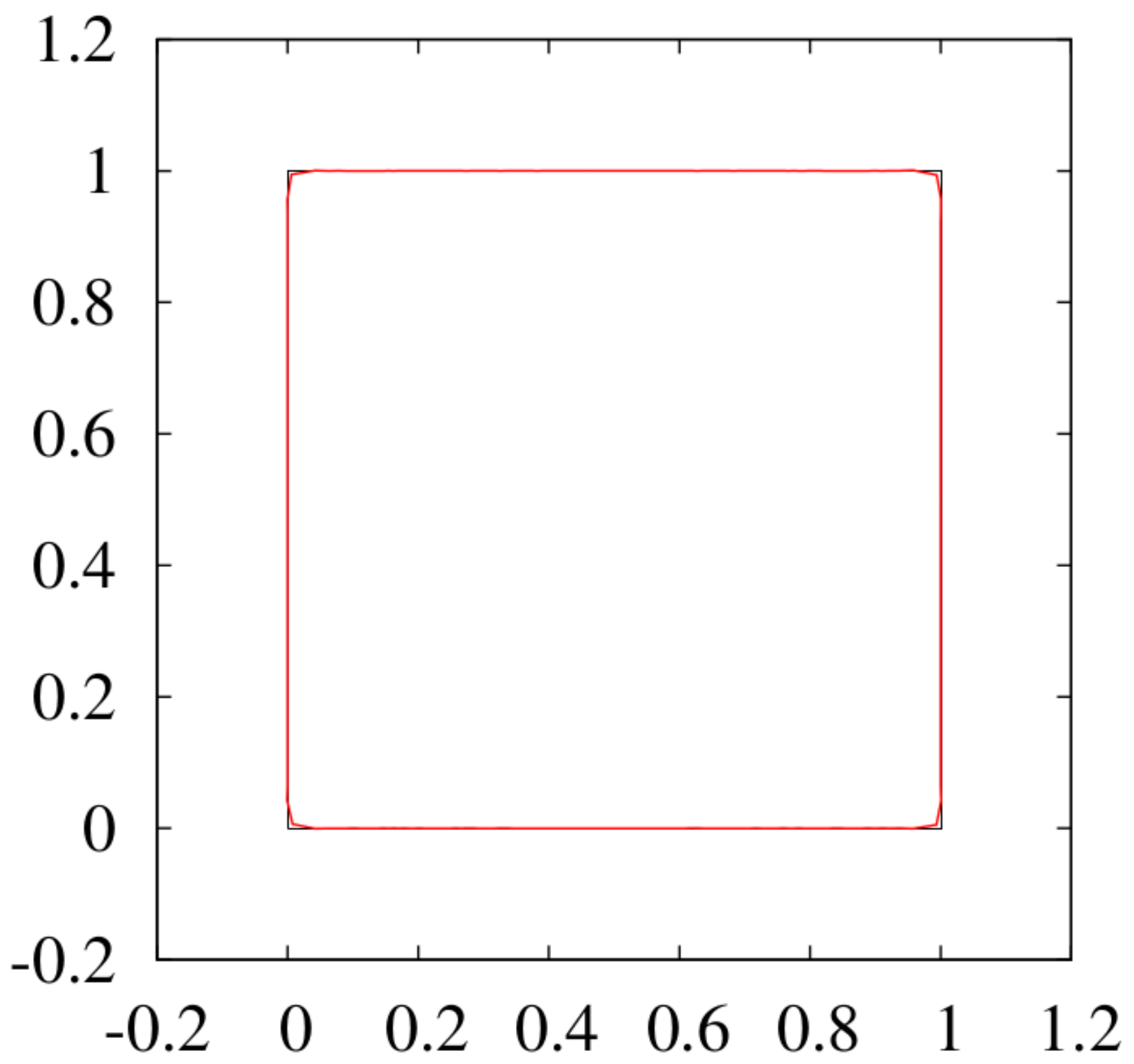}}
	\caption{$J_\infty^h$ with Euclidean metric}
    \end{subfigure}%
    \begin{subfigure}{.38\textwidth}
        \centering
	\scalebox{0.27}{\includegraphics{./plots/iter_L2_2000.pdf}}
	\caption{$J_2^h$ with  $H^1$-metric}
     \end{subfigure}

     \caption{Comparison of final shapes}\label{fig:comparison}
\end{figure}

All implementations were carried out within the FEniCS Software package \cite{fenics}.  The
quadratic program \eqref{eq:quadratic} is solved with the
python package \emph{cvxopt}; cf. \cite{cvxopt}.

\begin{figure}[h]
    
\begin{gnuplot}[terminal=epslatex,terminaloptions=color dashed]
	set xlabel 'iteration $n$ (log scale)'
set ylabel '$J_2^h(\Omega_n)$ (log scale)'
set logscale y
set logscale x
set xrange [1:2000]
plot 'cost_func_L20'  using 1:2 with lines lc rgb "blue" lt -1 lw 1 title '$J_\infty^h$ with Euclidean metric', \
     'cost_func_L20_H1'  using 1:2 with lines lc rgb "green" lt 2 lw 1 title '$J_\infty^h$ with $H^1$ metric',\
     'cost_func_L21' using 1:2 with lines lc rgb "red" lt -1 lw 1 title '$J_2^h$ with Euclidean metric',\
    'cost_func_L21_H1' using 1:2 with lines lc rgb "orange" lt 2 lw 1 title '$J_2^h$ with $H^1$ metric',\
    x**(-2) with lines lw 1 lt -1 title '$f(x)=x^{-2}$'
\end{gnuplot}
\caption{x-axis number of iterations and y-axis values $J_2^h(\Omega_n)$}
    \label{eq:cost_log_plot_L2_Linfty_euclidean}

\end{figure}

\bibliographystyle{plain}
\bibliography{refs}

\def\cprime{$'$}
\begin{thebibliography}{10}

\bibitem{cvxopt}
M.~S. Andersen, J.~Dahl, and L.~Vandenberghe.
\newblock Cvxopt: A python package for convex optimization, version 1.1.8.
\newblock {\em preprint}, 2016.

\bibitem{HO3}
P.~Beremlijski, J.~Haslinger, M.~Ko{\v{c}}vara, and J.~Outrata.
\newblock Shape optimization in contact problems with {C}oulomb friction.
\newblock {\em SIAM J. Optim.}, 13(2):561--587, 2002.

\bibitem{MR861100}
E.~Casas.
\newblock Control of an elliptic problem with pointwise state constraints.
\newblock {\em SIAM J. Control Optim.}, 24(6):1309--1318, 1986.

\bibitem{MR1058436}
F.~H. Clarke.
\newblock {\em Optimization and nonsmooth analysis}, volume~5 of {\em Classics
  in Applied Mathematics}.
\newblock Society for Industrial and Applied Mathematics (SIAM), Philadelphia,
  PA, second edition, 1990.

\bibitem{MR2987902}
C.~Clason.
\newblock {$L^\infty$} fitting for inverse problems with uniform noise.
\newblock {\em Inverse Problems}, 28(10):104007, 18, 2012.

\bibitem{MR2804648}
C.~Clason, K.~Ito, and K.~Kunisch.
\newblock Minimal invasion: an optimal {${\rm L}^\infty$} state constraint
  problem.
\newblock {\em ESAIM Math. Model. Numer. Anal.}, 45(3):505--522, 2011.

\bibitem{MR2731611}
M.~C. Delfour and J.-P. Zol{\'e}sio.
\newblock {\em Shapes and geometries}, volume~22 of {\em Advances in Design and
  Control}.
\newblock Society for Industrial and Applied Mathematics (SIAM), Philadelphia,
  PA, second edition, 2011.
\newblock Metrics, analysis, differential calculus, and optimization.

\bibitem{MR1088479}
V.~F. Dem{\cprime}yanov and V.~N. Maloz{\"e}mov.
\newblock {\em Einf\"uhrung in Minimax-Problem}.
\newblock Akademische Verlagsgesellschaft Geest \& Portig K.-G., 1975.
\newblock German translation.

\bibitem{eigelsturm16}
M.~Eigel and K.~Sturm.
\newblock Reproducing kernel hilbert spaces in pde constrained shape
  optimisation.
\newblock {\em preprint}.

\bibitem{fenics}
{FEniCS Project - Automated solution of Differential Equations by the Finite
  Element Method}, \verb#http://fenicsproject.org#.

\bibitem{MR990595}
K.~Gr{\"o}ger.
\newblock A {$W^{1,p}$}-estimate for solutions to mixed boundary value problems
  for second order elliptic differential equations.
\newblock {\em Math. Ann.}, 283(4):679--687, 1989.

\bibitem{MR1892589}
T.~Grund and A.~R{\"o}sch.
\newblock Optimal control of a linear elliptic equation with a supremum norm
  functional.
\newblock {\em Optim. Methods Softw.}, 15(3-4):299--329, 2001.

\bibitem{MR1641286}
A.~Habbal.
\newblock Nonsmooth shape optimization applied to linear acoustics.
\newblock {\em SIAM J. Optim.}, 8(4):989--1006 (electronic), 1998.

\bibitem{MR2551487}
R.~Haller-Dintelmann, C.~Meyer, J.~Rehberg, and A.~Schiela.
\newblock H\"older continuity and optimal control for nonsmooth elliptic
  problems.
\newblock {\em Appl. Math. Optim.}, 60(3):397--428, 2009.

\bibitem{MR1969772}
J.~Haslinger and R.~A.~E. M{\"a}kinen.
\newblock {\em Introduction to shape optimization}, volume~7 of {\em Advances
  in Design and Control}.
\newblock Society for Industrial and Applied Mathematics (SIAM), Philadelphia,
  PA, 2003.
\newblock Theory, approximation, and computation.

\bibitem{HO2}
J.~Haslinger, J.~V. Outrata, and R.~Path{\'o}.
\newblock Shape optimization in 2{D} contact problems with given friction and a
  solution-dependent coefficient of friction.
\newblock {\em Set-Valued Var. Anal.}, 20(1):31--59, 2012.

\bibitem{HeineSturm16}
C.~Heinemann and K.~Sturm.
\newblock Shape optimization for a class of semi-linear variational
  inequalities.
\newblock {\em preprint}.

\bibitem{HenPie05}
A.~Henrot and M.~Pierre.
\newblock {\em Variation et optimisation de formes}, volume~48 of {\em
  Math\'ematiques \& Applications (Berlin) [Mathematics \& Applications]}.
\newblock Springer, Berlin, 2005.
\newblock Une analyse g{\'e}om{\'e}trique. [A geometric analysis].

\bibitem{HL11}
M.~Hinterm{\"u}ller and A.~Laurain.
\newblock Optimal shape design subject to elliptic variational inequalities.
\newblock {\em SIAM J. Control Optim.}, 49(3):1015--1047, 2011.

\bibitem{lauraindistributed}
A.~Laurain and K.~Sturm.
\newblock Distributed shape derivative via averaged adjoint method and
  applications.
\newblock {\em accepted in ESAIM Mathematical Modelling and Numerical
  Analysis}.

\bibitem{MR2823475}
C.~Meyer, L.~Panizzi, and A.~Schiela.
\newblock Uniqueness criteria for the adjoint equation in state-constrained
  elliptic optimal control.
\newblock {\em Numer. Funct. Anal. Optim.}, 32(9):983--1007, 2011.

\bibitem{sokosolo}
J.~Soko{\l}owski.
\newblock Shape sensitivity analysis of variational inequalities.
\newblock In {\em Shape optimization and free boundaries ({M}ontreal, {PQ},
  1990)}, volume 380 of {\em NATO Adv. Sci. Inst. Ser. C Math. Phys. Sci.},
  pages 287--319. Kluwer Acad. Publ., Dordrecht, 1992.

\bibitem{MR895743}
J.~Soko{\l}owski and J.-P. Zol{\'e}sio.
\newblock Shape design sensitivity analysis of plates and plane elastic solids
  under unilateral constraints.
\newblock {\em J. Optim. Theory Appl.}, 54(2):361--382, 1987.

\bibitem{SokZol89}
J.~Soko{\l}owski and J.-P. Zol{\'e}sio.
\newblock {\em Introduction to shape optimization}, volume~16 of {\em Springer
  Series in Computational Mathematics}.
\newblock Springer, Berlin, 1992.
\newblock Shape sensitivity analysis.

\bibitem{MR3374631}
K.~Sturm.
\newblock Minimax {L}agrangian approach to the differentiability of nonlinear
  {PDE} constrained shape functions without saddle point assumption.
\newblock {\em SIAM J. Control Optim.}, 53(4):2017--2039, 2015.

\bibitem{sturm2015shape}
K.~Sturm.
\newblock {\em On shape optimization with non-linear partial differential
  equations}.
\newblock PhD thesis, Berlin, Technische Universit{\"a}t Berlin, Diss., 2015.

\bibitem{MR1787146}
D.~Werner.
\newblock {\em Funktionalanalysis}.
\newblock Springer-Verlag, Berlin, 5th extended edition, 2005.

\end{thebibliography}

\end{document}